\documentclass[a4paper,11pt]{article}
\usepackage{amsmath,amssymb,amsthm}
\usepackage{amscd}

\makeatletter

\@addtoreset{equation}{subsection}
\makeatother

\voffset=0mm
\headheight=0mm
\topmargin=0mm
\oddsidemargin=0mm
\evensidemargin=0mm
\textheight=240mm
\textwidth=162mm
\abovedisplayskip=0pt
\abovedisplayshortskip=0pt
\belowdisplayskip=0pt
\belowdisplayshortskip=0pt

\newtheorem{dfn}{Definition}[subsection]
\newtheorem{prop}[dfn]{Proposition}
\newtheorem{thm}[dfn]{Theorem}
\newtheorem{lem}[dfn]{Lemma}
\newtheorem{cor}[dfn]{Corollary}

\newtheorem{rem}[dfn]{Remark}
\newtheorem{quest}[dfn]{Question}

\begin{document}

\title
{The generalized Dehn twist along a figure eight}
\author{Yusuke Kuno}
\date{}
\maketitle

\begin{abstract}
For any unoriented loop on a compact connected oriented
surface with one boundary component, the generalized Dehn
twist along the loop is defined as an automorphism of
the completed group ring of the fundamental group of the surface. If the loop is
simple, this is the usual right handed Dehn twist, in particular
realized as a mapping class of the surface. We investigate the case
when the loop has a single transverse double point, and show that
in this case the generalized Dehn twist is not realized as a mapping class.
\end{abstract}

\section{Introduction}
The right handed Dehn twist $t_C$ along a simple closed curve $C$
on an oriented surface is a diffeomorphism of the surface, which
is illustrated in Figure 1. By definition, a Dehn twist is local;
the support of a Dehn twist lies in a regular neighborhood of the curve.

\begin{center}
Figure 1: the right handed Dehn twist along $C$

\unitlength 0.1in
\begin{picture}( 49.0000, 10.5000)(  2.0000,-12.0000)
%
\special{pn 13}%
\special{pa 200 400}%
\special{pa 2200 400}%
\special{fp}%
%
\special{pn 13}%
\special{pa 200 1200}%
\special{pa 2200 1200}%
\special{fp}%
%
\special{pn 13}%
\special{pa 3100 400}%
\special{pa 5100 400}%
\special{fp}%
%
\special{pn 13}%
\special{pa 3100 1200}%
\special{pa 5100 1200}%
\special{fp}%
%
\special{pn 8}%
\special{ar 1200 800 100 400  1.5707963 4.7123890}%
%
\special{pn 8}%
\special{ar 1200 800 100 400  4.7123890 4.9523890}%
\special{ar 1200 800 100 400  5.0963890 5.3363890}%
\special{ar 1200 800 100 400  5.4803890 5.7203890}%
\special{ar 1200 800 100 400  5.8643890 6.1043890}%
\special{ar 1200 800 100 400  6.2483890 6.4883890}%
\special{ar 1200 800 100 400  6.6323890 6.8723890}%
\special{ar 1200 800 100 400  7.0163890 7.2563890}%
\special{ar 1200 800 100 400  7.4003890 7.6403890}%
\special{ar 1200 800 100 400  7.7843890 7.8539816}%
%
\special{pn 8}%
\special{ar 4100 800 100 400  1.5707963 4.7123890}%
%
\special{pn 8}%
\special{ar 4100 800 100 400  4.7123890 4.9523890}%
\special{ar 4100 800 100 400  5.0963890 5.3363890}%
\special{ar 4100 800 100 400  5.4803890 5.7203890}%
\special{ar 4100 800 100 400  5.8643890 6.1043890}%
\special{ar 4100 800 100 400  6.2483890 6.4883890}%
\special{ar 4100 800 100 400  6.6323890 6.8723890}%
\special{ar 4100 800 100 400  7.0163890 7.2563890}%
\special{ar 4100 800 100 400  7.4003890 7.6403890}%
\special{ar 4100 800 100 400  7.7843890 7.8539816}%
%
\special{pn 8}%
\special{pa 200 800}%
\special{pa 2200 800}%
\special{fp}%
%
\special{pn 8}%
\special{ar 3900 800 200 400  6.2831853 6.4831853}%
\special{ar 3900 800 200 400  6.6031853 6.8031853}%
\special{ar 3900 800 200 400  6.9231853 7.1231853}%
\special{ar 3900 800 200 400  7.2431853 7.4431853}%
\special{ar 3900 800 200 400  7.5631853 7.7631853}%
%
\special{pn 8}%
\special{ar 4300 800 200 400  3.1415927 3.3415927}%
\special{ar 4300 800 200 400  3.4615927 3.6615927}%
\special{ar 4300 800 200 400  3.7815927 3.9815927}%
\special{ar 4300 800 200 400  4.1015927 4.3015927}%
\special{ar 4300 800 200 400  4.4215927 4.6215927}%
%
\special{pn 8}%
\special{ar 3900 1000 200 200  1.5707963 3.1415927}%
%
\special{pn 8}%
\special{ar 3500 1000 200 200  4.7123890 6.2831853}%
%
\special{pn 8}%
\special{pa 3100 800}%
\special{pa 3500 800}%
\special{fp}%
%
\special{pn 8}%
\special{ar 4300 600 200 200  4.7123890 6.2831853}%
%
\special{pn 8}%
\special{ar 4700 600 200 200  1.5707963 3.1415927}%
%
\special{pn 8}%
\special{pa 5100 800}%
\special{pa 4700 800}%
\special{fp}%
\put(4.2000,-7.4000){\makebox(0,0)[lb]{$\ell$}}%
\put(10.7000,-3.3000){\makebox(0,0)[lb]{$C$}}%
\put(39.7000,-3.2000){\makebox(0,0)[lb]{$C$}}%
\put(32.9000,-7.3000){\makebox(0,0)[lb]{$t_C(\ell)$}}%
\end{picture}%
\end{center}

The purpose of this paper is to give a generalization of Dehn twists
for not necessarily simple loops and begin the study of it.
We will consider on a compact connected oriented surface $\Sigma$ with
one boundary component. In this case the mapping class group $\mathcal{M}_{g,1}$
of the surface faithfully acts on the fundamental group $\pi$ of the surface.
The results of this paper are summarized as follows:
\begin{enumerate}
\item For each unoriented loop $\gamma \subset \Sigma$,
the generalized Dehn twist along $\gamma$, denoted by $t_{\gamma}$,
is defined as an automorphism of the completed group ring $\widehat{\mathbb{Q}\pi}$
(Definition \ref{GDT}).
\item When $\gamma$ is simple, $t_{\gamma}$ is the usual right handed Dehn twist
along $\gamma$.
\item We consider whether $t_{\gamma}$ is a mapping class, i.e.,
$t_{\gamma}$ lies in the image of the natural injection
$\mathcal{M}_{g,1} \hookrightarrow {\rm Aut}(\widehat{\mathbb{Q}\pi})$.
We show that if $t_{\gamma}$ is a mapping class, then it must be local;
there is a diffeomorphism representing $t_{\gamma}$,
whose support lies in a regular neighborhood of $\gamma$
(Theorem \ref{support}).
\item Using the criterion above, we show that when $\gamma$ has
a single transverse double point and is not homotopic to
a power of a simple closed curve (we shall call such a loop
a {\it figure eight}), then $t_{\gamma}$ is
not a mapping class (Theorem \ref{mainthm}).
\end{enumerate}

Our generalization of Dehn twists is based on the results
in \cite{KK}. In that paper, the action of Dehn twists
on the completed group ring of the fundamental group of
the surface is given in terms of an invariant of unoriented
loops on the surface $\Sigma$. This invariant is regarded
as a derivation on the completed group ring $\widehat{\mathbb{Q}\pi}$.
When the loop is simple, this invariant turns out to be
``the logarithms of Dehn twists". Even if the loop is not
simple, the exponential of this invariant still have
its meaning as an automorphism of $\widehat{\mathbb{Q}\pi}$.
This is our generalization.

The proof of the main theorem presented here is rather
ad hoc, since we depend on the classification of the possible
configurations of a figure eight on $\Sigma$ (Proposition \ref{PinSig}),
and explicit computations of tensors for each configuration.
But still, we meet an interesting phenomenon. Namely,
if we assume the generalized Dehn twist along a figure eight $\gamma$
is a mapping class, then it should be written as
$t_{\gamma}=t_{C_1}^2t_{C_2}^2t_{C_3}^{-1}$, where
$C_i$ are the suitable numbered boundary components
of a closed regular neighborhood of $\gamma$ (Proposition \ref{22-1}).
The main theorem is proved by looking
at this equation in higher degree.

\vspace{0.3cm}

\noindent \textbf{Acknowledgments.} I am grateful
to Nariya Kawazumi, for valuable discussions and communicating
to me Proposition \ref{compatibleU}. A part of this work has
been done during my stay at QGM, Aarhus University
from September 2010 to January 2011.
I would like to thank QGM for kind hospitality.
This research is supported by JSPS Research Fellowships for
Young Scientists (22$\cdot$4810).

\tableofcontents

\section{Preliminaries}

In this section we prepare what we need to define generalized Dehn twists.
We first recall materials from \cite{KK}, such as symplectic expansion,
total Johnson map, Kontsevich's ``associative", and the Goldman Lie algebra.
After that we study symplectic derivations on the completed tensor algebra
and algebra automorphisms of the completed tensor algebra preserving
the symplectic form. We end this section by showing that
the mapping class group acts on Kontsevich's ``associative"
through a symplectic expansion.

All the loops, the homotopies, and the isotopies
that we consider are piecewise differentiable.
As usual, we often ignore the distinction between a path
and its homotopy class.

\subsection{Symplectic expansion and total Johnson map}
Let $\Sigma$ be a compact connected oriented $C^{\infty}$-surface
of genus $g>0$ with one boundary component. Taking a basepoint
$*$ on the boundary $\partial \Sigma$ we denote by $\pi:=\pi_1(\Sigma,*)$
the fundamental group of $\Sigma$, which is free of rank $2g$.
Let $\zeta\in \pi$ be a based loop
parallel to $\partial \Sigma$ and going by counter-clockwise
manner. If we take symplectic generators $\alpha_1,\beta_1,
\ldots, \alpha_g,\beta_g \in \pi$ as shown in Figure 2, then
$\zeta=\prod_{i=1}^g [\alpha_i,\beta_i]$. Here, for based
loops $x$ and $y$, their product $xy$ means that $x$ is traversed
first, and $[\alpha_i,\beta_i]=\alpha_i\beta_i\alpha_i^{-1}\beta_i^{-1}$.

\begin{center}
Figure 2: symplectic generators for $g=2$

\unitlength 0.1in
\begin{picture}( 37.5000, 18.2000)(  2.0000,-18.7000)
%
\special{pn 13}%
\special{ar 3750 1070 200 800  0.0000000 6.2831853}%
%
\special{pn 13}%
\special{ar 1000 1070 800 800  1.5707963 4.7123890}%
%
\special{pn 13}%
\special{ar 1000 1070 300 300  0.0000000 6.2831853}%
%
\special{pn 13}%
\special{ar 2400 1070 300 300  0.0000000 6.2831853}%
%
\special{pn 8}%
\special{ar 1000 1070 500 500  1.5707963 6.2831853}%
%
\special{pn 8}%
\special{ar 2400 1070 500 500  1.5707963 6.2831853}%
%
\special{pn 8}%
\special{ar 2000 1070 500 500  1.5707963 3.1415927}%
%
\special{pn 8}%
\special{ar 3400 1070 500 500  1.5707963 3.1415927}%
%
\special{pn 8}%
\special{ar 1800 1070 500 500  1.5707963 3.1415927}%
%
\special{pn 8}%
\special{ar 3200 1070 500 500  1.5707963 3.1415927}%
%
\special{pn 8}%
\special{ar 2200 1070 500 500  1.5707963 3.1415927}%
%
\special{pn 8}%
\special{ar 3600 1070 500 500  1.5707963 3.1415927}%
%
\special{pn 8}%
\special{pa 2700 1070}%
\special{pa 2700 1038}%
\special{pa 2700 1006}%
\special{pa 2700 974}%
\special{pa 2700 942}%
\special{pa 2700 910}%
\special{pa 2700 878}%
\special{pa 2700 846}%
\special{pa 2700 814}%
\special{pa 2698 782}%
\special{pa 2698 750}%
\special{pa 2698 718}%
\special{pa 2700 686}%
\special{pa 2702 654}%
\special{pa 2706 622}%
\special{pa 2710 590}%
\special{pa 2716 558}%
\special{pa 2726 526}%
\special{pa 2736 494}%
\special{pa 2746 464}%
\special{pa 2760 434}%
\special{pa 2776 406}%
\special{pa 2794 380}%
\special{pa 2812 356}%
\special{pa 2834 332}%
\special{pa 2858 310}%
\special{pa 2882 288}%
\special{pa 2900 270}%
\special{sp 0.070}%
%
\special{pn 8}%
\special{pa 3100 1070}%
\special{pa 3100 1038}%
\special{pa 3100 1006}%
\special{pa 3100 974}%
\special{pa 3100 942}%
\special{pa 3100 910}%
\special{pa 3100 878}%
\special{pa 3100 846}%
\special{pa 3102 814}%
\special{pa 3102 782}%
\special{pa 3102 750}%
\special{pa 3102 718}%
\special{pa 3102 686}%
\special{pa 3100 654}%
\special{pa 3096 622}%
\special{pa 3090 590}%
\special{pa 3084 558}%
\special{pa 3076 526}%
\special{pa 3066 494}%
\special{pa 3054 464}%
\special{pa 3040 434}%
\special{pa 3026 406}%
\special{pa 3008 380}%
\special{pa 2988 356}%
\special{pa 2966 332}%
\special{pa 2944 310}%
\special{pa 2920 288}%
\special{pa 2900 270}%
\special{sp}%
%
\special{pn 8}%
\special{pa 1300 1070}%
\special{pa 1300 1038}%
\special{pa 1300 1006}%
\special{pa 1300 974}%
\special{pa 1300 942}%
\special{pa 1300 910}%
\special{pa 1300 878}%
\special{pa 1300 846}%
\special{pa 1300 814}%
\special{pa 1298 782}%
\special{pa 1298 750}%
\special{pa 1298 718}%
\special{pa 1300 686}%
\special{pa 1302 654}%
\special{pa 1306 622}%
\special{pa 1310 590}%
\special{pa 1316 558}%
\special{pa 1326 526}%
\special{pa 1336 494}%
\special{pa 1346 464}%
\special{pa 1360 434}%
\special{pa 1376 406}%
\special{pa 1394 380}%
\special{pa 1412 356}%
\special{pa 1434 332}%
\special{pa 1458 310}%
\special{pa 1482 288}%
\special{pa 1500 270}%
\special{sp 0.070}%
%
\special{pn 8}%
\special{pa 1700 1070}%
\special{pa 1700 1038}%
\special{pa 1700 1006}%
\special{pa 1700 974}%
\special{pa 1700 942}%
\special{pa 1700 910}%
\special{pa 1700 878}%
\special{pa 1700 846}%
\special{pa 1702 814}%
\special{pa 1702 782}%
\special{pa 1702 750}%
\special{pa 1702 718}%
\special{pa 1702 686}%
\special{pa 1700 654}%
\special{pa 1696 622}%
\special{pa 1690 590}%
\special{pa 1684 558}%
\special{pa 1676 526}%
\special{pa 1666 494}%
\special{pa 1654 464}%
\special{pa 1640 434}%
\special{pa 1626 406}%
\special{pa 1608 380}%
\special{pa 1588 356}%
\special{pa 1566 332}%
\special{pa 1544 310}%
\special{pa 1520 288}%
\special{pa 1500 270}%
\special{sp}%
%
\special{pn 8}%
\special{pa 1000 1570}%
\special{pa 3600 1570}%
\special{fp}%
%
\special{pn 13}%
\special{pa 1000 1870}%
\special{pa 3746 1870}%
\special{fp}%
%
\special{pn 13}%
\special{pa 1000 270}%
\special{pa 3746 270}%
\special{fp}%
%
\special{pn 20}%
\special{sh 1}%
\special{ar 3600 1570 10 10 0  6.28318530717959E+0000}%
\special{sh 1}%
\special{ar 3600 1570 10 10 0  6.28318530717959E+0000}%
%
\special{pn 8}%
\special{pa 1676 770}%
\special{pa 1700 670}%
\special{fp}%
\special{pa 1700 670}%
\special{pa 1726 770}%
\special{fp}%
%
\special{pn 8}%
\special{pa 3076 770}%
\special{pa 3100 670}%
\special{fp}%
\special{pa 3100 670}%
\special{pa 3126 770}%
\special{fp}%
%
\special{pn 8}%
\special{pa 926 546}%
\special{pa 1026 570}%
\special{fp}%
\special{pa 1026 570}%
\special{pa 926 596}%
\special{fp}%
%
\special{pn 8}%
\special{pa 2326 546}%
\special{pa 2426 570}%
\special{fp}%
\special{pa 2426 570}%
\special{pa 2326 596}%
\special{fp}%
%
\special{pn 8}%
\special{pa 3600 1700}%
\special{pa 3660 1784}%
\special{fp}%
\special{pa 3660 1784}%
\special{pa 3646 1682}%
\special{fp}%
\put(5.6000,-6.0500){\makebox(0,0)[lb]{$\alpha_1$}}%
\put(14.4500,-2.3000){\makebox(0,0)[lb]{$\beta_1$}}%
\put(21.0000,-5.3000){\makebox(0,0)[lb]{$\alpha_2$}}%
\put(28.2500,-2.2000){\makebox(0,0)[lb]{$\beta_2$}}%
\put(36.3500,-14.7500){\makebox(0,0)[lb]{$*$}}%
\put(34.2500,-17.8500){\makebox(0,0)[lb]{$\zeta$}}%
%
\special{pn 8}%
\special{ar 3350 460 100 100  6.2831853 6.2831853}%
\special{ar 3350 460 100 100  0.0000000 4.7123890}%
%
\special{pn 8}%
\special{pa 3450 460}%
\special{pa 3390 490}%
\special{fp}%
\special{pa 3450 460}%
\special{pa 3486 516}%
\special{fp}%
\end{picture}%
\end{center}

Let $H:=H_1(\Sigma;\mathbb{Q})$ be the first homology group of
$\Sigma$, and $\widehat{T}:=\prod_{m=0}^{\infty}H^{\otimes m}$
the completed tensor algebra generated by $H$. The algebra $\widehat{T}$
has a decreasing filtration given by
$\widehat{T}_p:=\prod_{m\ge p}^{\infty}H^{\otimes m}$, $p\ge 1$,
and is a complete Hopf algebra with respect to the
coproduct given by $\Delta(X)=X\hat{\otimes}1+1\hat{\otimes}X$,
$X\in H$.
We denote by $A_i,B_i\in H$ the homology class
represented by $\alpha_i,\beta_i$, respectively.
Also, for $x\in \pi$ we denote by $[x]\in H$ the corresponding
homology class.
The two tensor $\omega=\sum_{i=1}^g A_iB_i-B_iA_i\in H^{\otimes 2}$
is independent of the choice of symplectic generators,
and called {\it the symplectic form}. Here and throughout this paper
we often omit $\otimes$ to express tensors.

\begin{dfn}[Kawazumi \cite{Kaw1}]
A map $\theta \colon \pi \to \widehat{T}$ is called
a Magnus expansion of $\pi$ if
\begin{enumerate}
\item for any $x\in \pi$, $\theta(x)\equiv 1+[x] \ ({\rm mod\ }\widehat{T}_2)$,
\item for any $x,y\in \pi$, $\theta(xy)=\theta(x)\theta(y)$.
\end{enumerate}
\end{dfn}

Let $\widehat{\mathbb{Q}\pi}$ be the completed group ring
of $\pi$. Namely, $\widehat{\mathbb{Q}\pi}:=
\varprojlim_m \mathbb{Q}\pi/(I\pi)^m$, where $I\pi$
is the augmentation ideal of the group ring $\mathbb{Q}\pi$.
It has a decreasing filtration given by
$\varprojlim_{m\ge p} (I\pi)^p/(I\pi)^m$, $p\ge 1$.
Any Magnus expansion $\theta$ induces an isomorphism
$\theta\colon \widehat{\mathbb{Q}\pi}\stackrel{\cong}{\to}
\widehat{T}$ of complete augmented algebras.
Kawazumi \cite{Kaw1} introduced this notion to study the automorphism
group of a free group or the mapping class group.
Taking the fact that $\pi$ is the fundamental group of a surface
into consideration, we consider the following notion.

\begin{dfn}[Massuyeau \cite{Mas}]
A Magnus expansion $\theta\colon \pi \to \widehat{T}$ of $\pi$
is called a symplectic expansion if
\begin{enumerate}
\item for any $x\in \pi$, $\theta(x)$ is group-like, i.e., $\Delta(\theta(x))
=\theta(x)\hat{\otimes} \theta(x)$,
\item $\theta(\zeta)=\exp(\omega)$.
\end{enumerate}
\label{SYMPEXP}
\end{dfn}

Here $\exp(\omega)=\sum_{m=0}^{\infty}(1/m!)\omega^m$.
Symplectic expansions do exist \cite{Mas}, and
they are infinitely many \cite{KK} Proposition 2.8.1.
For several constructions, see \cite{Kaw2} \cite{Ku} \cite{Mas}.
Any symplectic expansion $\theta$ induces an isomorphism
$\theta\colon \widehat{\mathbb{Q}\pi} \stackrel{\cong}{\to} \widehat{T}$
of complete Hopf algebras. Moreover, the restriction of $\theta$
to a cyclic subgroup generated by $\zeta$ gives an isomorphism
$\widehat{\mathbb{Q}\langle \zeta \rangle}\stackrel{\cong}{\to} \mathbb{Q}[[\omega]]$.
Actually, for our purpose considering Magnus expansions satisfying
the second condition of Definition \ref{SYMPEXP} is sufficient,
due to Proposition \ref{compatibleU}. But still this notion
would play a significant role in study of the mapping class groups,
as we see in the work of Massuyeau \cite{Mas} where he fully used
the group-like condition of a symplectic expansion.

We denote by $\mathcal{M}_{g,1}$ the mapping class group
of $\Sigma$ relative to the boundary, namely the
group of orientation preserving diffeomorphisms of $\Sigma$
which fix the boundary $\partial \Sigma$ pointwise,
modulo isotopies fixing $\partial \Sigma$ pointwise.
By the theorem of Dehn-Nielsen, we can identify
$$\mathcal{M}_{g,1}=\{ \varphi \in {\rm Aut}(\pi);
\varphi(\zeta)=\zeta \},$$
by looking at the natural action of $\mathcal{M}_{g,1}$
on the fundamental group $\pi$.

If we fix a Magnus expansion $\theta$, the associated
is the notion of the total Johnson map.
We denote by ${\rm Aut}(\widehat{T})$ the group of filter-preserving
algebra automorphisms of $\widehat{T}$.
Let $\varphi \in \mathcal{M}_{g,1}$. Then $\varphi$
induces an isomorphism $\varphi \colon \widehat{\mathbb{Q}\pi}
\to \widehat{\mathbb{Q}\pi}$, hence a uniquely determined automorphism
$T^{\theta}(\varphi)\in {\rm Aut}(\widehat{T})$
satisfying $T^{\theta}(\varphi)\circ \theta
=\theta \circ \varphi$.

\begin{dfn}[Kawazumi \cite{Kaw1}]
The automorphism $T^{\theta}(\varphi)\in {\rm Aut}(\widehat{T})$ is called the
total Johnson map of $\varphi$ associated to $\theta$.
\end{dfn}

The group homomorphism $T^{\theta} \colon \mathcal{M}_{g,1}
\to {\rm Aut}(\widehat{T})$ is injective, since the natural map
$\pi \to \widehat{\mathbb{Q}\pi}$ is injective by the classical fact
$\bigcap_{m=1}^{\infty} (I\pi)^m=0$.

\subsection{Kontsevich's ``associative"}
We define a linear map $N: \widehat{T} \to \widehat{T}_1$ by 
$$N(X_1\cdots X_m)=\sum_{i=1}^m X_i\cdots X_m X_1 \cdots X_{i-1},$$
where $m\ge 1$, $X_i\in H$, and $N(1)=0$.
The following lemma will be used frequently.

\begin{lem}[\cite{KK} Lemma 2.6.2 (1)(2)]
\label{propN}
\begin{enumerate}
\item For $u,v\in \widehat{T}$, $N(uv)=N(vu)$.
\item For $u,v,w\in \widehat{T}$, $N([u,v]w)=N(u[v,w])$.
\end{enumerate}

Here $[u,v]=uv-vu$.
\end{lem}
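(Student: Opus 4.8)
The plan is to reduce both identities to the single combinatorial observation that $N$ depends only on the \emph{cyclic} word underlying a monomial, and then obtain (2) from (1) by a purely formal manipulation of commutators. Since $N$ is linear and the multiplication of $\widehat{T}$ is continuous and bilinear, the maps $(u,v)\mapsto N(uv)$ and $(u,v,w)\mapsto N([u,v]w)$, $N(u[v,w])$ are continuous and multilinear; as $N$ preserves the filtration ($N$ is degree-preserving on each $H^{\otimes m}$, and $N(H^{\otimes 0})=0$), it therefore suffices to verify (1) and (2) when $u,v,w$ are monomials of the form $X_1\cdots X_p$, $Y_1\cdots Y_q$, $Z_1\cdots Z_r$ with all letters in $H$. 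The case in which one of the factors lies in $H^{\otimes 0}$ is trivial, because $N(1)=0$ forces $N(cu)=cN(u)=N(uc)$ for a scalar $c$.

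For (1), take $u=X_1\cdots X_p$ and $v=Y_1\cdots Y_q$. The $p+q$ cyclic rotations of the word $uv=X_1\cdots X_pY_1\cdots Y_q$ are, on the one hand, the words $X_i\cdots X_pY_1\cdots Y_qX_1\cdots X_{i-1}$ for $1\le i\le p$ together with $Y_j\cdots Y_qX_1\cdots X_pY_1\cdots Y_{j-1}$ for $1\le j\le q$; on the other hand these are exactly the $p+q$ cyclic rotations of $vu=Y_1\cdots Y_qX_1\cdots X_p$, merely listed in a different order (indeed $vu$ is the rotation of $uv$ by $p$ positions). Since $N(uv)$ is by definition the sum of this multiset of rotations, $N(uv)=N(vu)$.

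For (2), expand the commutators: $[u,v]w=uvw-vuw$ and $u[v,w]=uvw-uwv$, so
\[
N([u,v]w) - N(u[v,w]) = \bigl(N(uvw)-N(vuw)\bigr) - \bigl(N(uvw)-N(uwv)\bigr) = N(uwv) - N(vuw).
\]
Applying (1) to the two factors $uw$ and $v$ gives $N(uwv)=N\bigl((uw)v\bigr)=N\bigl(v(uw)\bigr)=N(vuw)$, so the right-hand side vanishes, which proves (2). There is no genuine obstacle here: the content is entirely formal once one has (1), and the only point demanding a line of care is the reduction to monomials, i.e. checking that $N$ and the multilinear maps built from it are continuous for the filtration topology on $\widehat{T}=\prod_m H^{\otimes m}$ so that identities established on each graded piece propagate to all of $\widehat{T}$.
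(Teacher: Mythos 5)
Your proof is correct. The paper itself gives no proof of this lemma --- it is quoted verbatim from \cite{KK} Lemma 2.6.2 (1)(2) --- so there is nothing in the present text to compare against, but your argument is the standard one: part (1) is the observation that $N$ of a monomial is the sum of its cyclic rotations, which form the same multiset for $uv$ and $vu$ since $vu$ is itself a rotation of $uv$; part (2) then follows formally by expanding the commutators and applying (1) to the pair $(uw,v)$, and your reduction to monomials via linearity and degreewise continuity on $\widehat{T}=\prod_m H^{\otimes m}$ is sound.
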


Let us recall Kontsevich's ``associative" \cite{Kon}.
By definition, a derivation on $\widehat{T}$ is a linear map 
$D: \widehat{T} \to \widehat{T}$ 
satisfying the Leibniz rule:
$$D(u_1u_2) = D(u_1)u_2 + u_1D(u_2),$$
for $u_1, u_2 \in \widehat{T}$.
Since $\widehat{T}$ is freely generated by $H$ as a complete algebra, 
any derivation on $\widehat{T}$ is uniquely determined by its values on $H$, 
and the space of derivations of $\widehat{T}$ is identified with 
${\rm Hom}(H, \widehat{T})$. By the Poincar\'e duality, 
$\widehat{T}_1 \cong H\otimes\widehat{T}$ is identified with ${\rm Hom}(H, \widehat{T})$: 
\begin{equation}\label{T1-Hom}
\widehat{T}_1 \cong H\otimes\widehat{T}\overset\cong\to
{\rm Hom}(H,\widehat{T}), \quad
X \otimes u \mapsto (Y \mapsto (Y \cdot X)u).
\end{equation}
Here $(\ \cdot\ )$ is the intersection pairing on $H = H_1(\Sigma;
\mathbb{Q})$.

Let $\mathfrak{a}_g^-={\rm Der}_{\omega}(\widehat{T})$ be the space of derivations 
on $\widehat{T}$ killing the symplectic form $\omega$. An element of $\mathfrak{a}_g^-$
is called a {\it symplectic derivation}.
In view of (\ref{T1-Hom}) any derivation $D$ is written as 
$$D=\sum^g_{i=1}B_i\otimes D(A_i)-A_i\otimes D(B_i)\in H\otimes \widehat{T}. $$
Since $-D(\omega) = \sum^g_{i=1}[B_i,D(A_i)] - [A_i,D(B_i)]$, 
we have $\mathfrak{a}_g^-={\rm Ker}([\ ,\ ]: H \otimes\widehat{T}\to \widehat{T})$. 
It is easy to see ${\rm Ker}([\ ,\ ]) = N(\widehat{T}_1)$ (see \cite{KK} 
Lemma 2.6.2 (4)).
Hence we can write
$$
\mathfrak{a}_g^- = {\rm Ker}([\ ,\ ]: H\otimes\widehat{T} \to \widehat{T}) = N(\widehat{T}_1).
$$
The Lie subalgebra $\mathfrak{a}_g := N(\widehat{T}_2)$ is nothing but (the completion of)
what Kontsevich \cite{Kon} calls $a_g$.

By a straightforward computation, we have

\begin{lem}
\label{Nuvhomo}
Let $m,n\ge 1$ and $X_1,\ldots,X_m,Y_1,\ldots,Y_n \in H$. Then
we have
$$[N(X_1\cdots X_m),N(Y_1\cdots Y_n)]=N(N(X_1\cdots X_m)(Y_1\cdots Y_n)).$$
Here the left hand side means the Lie bracket of the two derivations
in $\mathfrak{a}_g^-=N(\widehat{T}_1)$
and in the right hand side $N(X_1\cdots X_m)(Y_1\cdots Y_n)$ means
the action of $N(X_1\cdots X_m)\in \mathfrak{a}_g^-$ on the tensor
$Y_1\cdots Y_n\in \widehat{T}$ as a derivation.
\end{lem}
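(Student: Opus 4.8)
The plan is to verify the identity after evaluating both sides on an arbitrary generator $Z\in H$. Write $u:=X_1\cdots X_m$ and $v:=Y_1\cdots Y_n$, and regard $N(u),N(v)$ as derivations of $\widehat{T}$ via the isomorphism \eqref{T1-Hom}. Then the left hand side is the commutator $N(u)\circ N(v)-N(v)\circ N(u)$, which is again a derivation, while the right hand side lies in $N(\widehat{T}_1)=\mathfrak{a}_g^-$, hence is also a derivation. Since a derivation of $\widehat{T}$ is determined by its restriction to $H$, it suffices to show that the two sides agree on each $Z\in H$.

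The key bookkeeping device I would introduce is a cyclic contraction: for a monomial $C=C_1\cdots C_k\in H^{\otimes k}$ and $Z\in H$ set $C\triangleleft Z:=\sum_{l=1}^{k}(Z\cdot C_l)\,C_{l+1}C_{l+2}\cdots C_k C_1\cdots C_{l-1}\in\widehat{T}$, with $(\ \cdot\ )$ the intersection pairing of \eqref{T1-Hom}. Unwinding \eqref{T1-Hom} together with the cyclic-sum definition of $N$ shows $D_{N(C)}(Z)=C\triangleleft Z$; in particular $N(u)(Z)=u\triangleleft Z$, and since $N(u)$ is a derivation, $N(u)(v)=\sum_{j=1}^{n}Y_1\cdots Y_{j-1}\,(u\triangleleft Y_j)\,Y_{j+1}\cdots Y_n$. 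Note that $C\triangleleft Z$ depends only on the cyclic word underlying $C$, in accordance with Lemma \ref{propN}(1) (cyclic invariance of $N$).

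Next I would compute the two sides. For the right hand side: expand each $u\triangleleft Y_j$, apply $N$, and use Lemma \ref{propN}(1) to collapse each term to $N(C_{ij})$, where $C_{ij}$ is the cyclic word of length $m+n-2$ obtained by deleting $X_i$ and $Y_j$ and juxtaposing the two cyclically-read remainders; thus $N(N(u)(v))=\sum_{i,j}(Y_j\cdot X_i)\,N(C_{ij})$. Evaluating at $Z$ by $D_{N(C_{ij})}(Z)=C_{ij}\triangleleft Z$ and splitting according to whether the letter paired with $Z$ is a surviving $X_k$ or a surviving $Y_l$ produces two sums, call them $(\mathrm{III})$ and $(\mathrm{IV})$. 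For the left hand side: from $N(v)(Z)=v\triangleleft Z$, then expanding $N(u)$ as a derivation on each monomial $Y_{l+1}\cdots Y_{l-1}$ occurring there (and symmetrically for $N(v)\circ N(u)$), one gets $(N(u)\circ N(v)-N(v)\circ N(u))(Z)=(\mathrm{I})-(\mathrm{II})$, where $(\mathrm{I})$ is the triple sum ``contract $Y_l$ with $Z$, then splice the necklace $u$ into $v$ by contracting some $Y_p$ with some $X_i$'', and $(\mathrm{II})$ is the same with the roles of $u$ and $v$ exchanged.

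Finally I would match terms: $(\mathrm{I})=(\mathrm{IV})$ after a relabelling of the summation indices, and $-(\mathrm{II})=(\mathrm{III})$ after using the antisymmetry $(X_k\cdot Y_j)=-(Y_j\cdot X_k)$ to absorb the sign together with interchanging the names of the two $X$-indices; this gives $(N(u)\circ N(v)-N(v)\circ N(u))(Z)=(\mathrm{III})+(\mathrm{IV})=N(N(u)(v))(Z)$. The only real difficulty here is the combinatorial bookkeeping — there is essentially no conceptual obstacle. The delicate point is keeping the index ranges for the ``linear'' words produced on the left (a derivation acting letter by letter on $v$) in exact correspondence with the ``cyclic'' words produced on the right (rotations of the single glued necklace $C_{ij}$), and accounting correctly for the single sign coming from the antisymmetric intersection pairing. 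As a consistency check to be made in advance, one notes that $N(N(u)(v))=-N(N(v)(u))$ by Lemma \ref{propN}(1) and antisymmetry of $(\ \cdot\ )$, so the right hand side is automatically alternating, matching the left hand side.
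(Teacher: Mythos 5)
Your proposal is correct, and it supplies exactly the computation the paper omits: the paper introduces Lemma \ref{Nuvhomo} with the single phrase ``By a straightforward computation, we have'' and gives no further argument. Your direct verification on a generator $Z\in H$ --- unwinding (\ref{T1-Hom}) into the cyclic contraction $C\triangleleft Z$, matching the spliced necklaces $C_{ij}$ on the right with the two triple sums on the left, and absorbing the one sign via antisymmetry of the intersection pairing --- is precisely that straightforward computation, carried out correctly.
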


\begin{cor}
\label{Nuv}
Let $u\in N(\widehat{T}_1)$ and $v\in \widehat{T}_1$. Then
$$[u,N(v)]=N(u(v)).$$
Here the left hand side means the Lie bracket of
$u,N(v)\in \mathfrak{a}_g^-=N(\widehat{T}_1)$ and
in the right hand side $u(v)$ means the action of $u\in \mathfrak{a}_g^-$ on the
tensor $v\in \widehat{T}$ as a derivation.
\end{cor}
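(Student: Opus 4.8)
The plan is to deduce the corollary from Lemma \ref{Nuvhomo} by a bilinearity-and-continuity argument, reducing a general pair $(u,v)$ to the case of monomials. First I would record that both sides of the claimed identity $[u,N(v)]=N(u(v))$ are continuous and bilinear as functions of $(u,v)\in N(\widehat{T}_1)\times\widehat{T}_1$. Bilinearity is immediate: $N$ is linear, the Lie bracket on $\mathfrak{a}_g^-$ is bilinear, and under the identification (\ref{T1-Hom}) the derivation attached to $u$ depends linearly on $u$, so $u\mapsto u(v)$ is linear for fixed $v$ and $v\mapsto u(v)$ is linear for fixed $u$. For continuity, note that $N$ carries $H^{\otimes m}$ into $H^{\otimes m}$, hence preserves the filtration $\{\widehat{T}_p\}$ and is continuous; likewise every element of $\mathfrak{a}_g^-$ is a filtration-preserving derivation, and so is the commutator of two such derivations. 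Consequently the maps $v\mapsto N(u(v))$ and $v\mapsto[u,N(v)]$ (for fixed $u$), and $u\mapsto N(u(v))$ and $u\mapsto[u,N(v)]$ (for fixed $v$), are continuous for the relevant product topologies.

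Next I would use these two properties to pass from Lemma \ref{Nuvhomo} to the general statement. Given $u\in N(\widehat{T}_1)$, choose $w\in\widehat{T}_1$ with $u=N(w)$ and expand $w=\sum_{m\ge1}w_m$ and $v=\sum_{n\ge1}v_n$ into their homogeneous components, each $w_m\in H^{\otimes m}$ and $v_n\in H^{\otimes n}$ being a finite linear combination of monomials $X_1\cdots X_m$ and $Y_1\cdots Y_n$ with $X_i,Y_j\in H$. These expansions converge in $\widehat{T}_1$, so $u=\sum_m N(w_m)$ converges in $\mathfrak{a}_g^-=N(\widehat{T}_1)$. By bilinearity the desired identity holds for each finite partial sum of $u$ paired with each finite partial sum of $v$ as soon as it holds on monomials, and on monomials it is exactly Lemma \ref{Nuvhomo}. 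Passing to the limit using the continuity statements above yields $[u,N(v)]=N(u(v))$ in general.

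The argument is essentially bookkeeping, and the only point I expect to need care is the continuity claim: that $[\,\cdot\,,N(v)]$ and $N((\,\cdot\,)(v))$ are continuous on $\mathfrak{a}_g^-$ and that $[u,N(\,\cdot\,)]$ and $N(u(\,\cdot\,))$ are continuous on $\widehat{T}_1$, so that applying these operations to a convergent sum commutes with the summation. This in turn reduces to the observations that $N$ is filtration-preserving, that evaluation of a fixed filtration-preserving derivation at $v$ depends (mod $\widehat{T}_p$) only on $v$ mod $\widehat{T}_p$, and that on the finite-dimensional quotients $\widehat{T}/\widehat{T}_p$ everything is a finite computation governed by the Leibniz rule. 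No new computation beyond Lemma \ref{Nuvhomo} is needed; alternatively one could bypass the density argument and verify $[u,N(v)](X)=N(u(v))(X)$ directly on generators $X\in H$, but this merely re-runs the calculation already behind Lemma \ref{Nuvhomo}.
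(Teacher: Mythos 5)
Your proposal is correct and follows the same route as the paper: the paper's proof simply observes that the homogeneous case is Lemma \ref{Nuvhomo} and that the general case follows by bilinearity (with the passage to the completion left implicit). Your more careful treatment of the continuity/filtration-preservation needed to sum over homogeneous components is a legitimate filling-in of that implicit step, not a different argument.
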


\begin{proof}
If $u$ and $v$ are homogeneous, this is Lemma \ref{Nuvhomo}.
The general case follows from the bi-linearity.
\end{proof}

Let ${\rm IA}(\widehat{T})$ be the subgroup of ${\rm Aut}(\widehat{T})$
consisting of automorphisms which act on $\widehat{T}_1/\widehat{T}_2\cong H$
as the identity. This group is identified with ${\rm Hom}(H,\widehat{T}_2)=
H\otimes \widehat{T}_2$,
by the logarithms:
\begin{equation}
\label{IA(T)}
{\rm IA}(\widehat{T}) \stackrel{\cong}{\to} {\rm Hom}(H,\widehat{T}_2)
= H\otimes \widehat{T}_2,\ U \mapsto (\log U)|_H.
\end{equation}
Let ${\rm IA}_{\omega}(\widehat{T})$ be the subgroup of ${\rm IA}(\widehat{T})$
consisting of automorphisms preserving $\omega$. By the same argument in
\cite{KK} Proposition 2.8.1, we see the bijection (\ref{IA(T)}) gives a bijection
$$
{\rm IA}_{\omega}(\widehat{T}) \stackrel{\cong}{\to}
{\rm Ker}([\ ,\ ]\colon H \otimes \widehat{T}_2
\to \widehat{T})=N(\widehat{T}_3).
$$

The following proposition was communicated to the author by Nariya Kawazumi.
\begin{prop}
\label{compatibleU}
Assume $U\in {\rm Aut}(\widehat{T})$ satisfies $U(\omega)=\omega$,
and let $v\in \widehat{T}_1$. Then
$$U(Nv)U^{-1}=N(Uv).$$
Here $U(Nv)U^{-1}$ means the conjugate of the derivation $Nv$ by
the algebra automorphism $U$.
\end{prop}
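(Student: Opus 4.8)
The plan is to reduce the identity to two elementary classes of automorphisms, each handled by the machinery already in place. As an orienting remark, both sides are a priori symplectic derivations: $U(Nv)U^{-1}$ is a derivation, being the conjugate of the derivation $Nv$ by an algebra automorphism, and it kills $\omega$ because $Nv$ does and $U(\omega)=\omega$; on the other side $Uv\in\widehat T_1$, so $N(Uv)\in N(\widehat T_1)=\mathfrak a_g^-$. The actual strategy is to let $P$ be the set of $U\in{\rm Aut}(\widehat T)$ with $U(\omega)=\omega$ for which $U(Nv)U^{-1}=N(Uv)$ holds for every $v\in\widehat T_1$, and to observe that $P$ is a subgroup: if $U,U'\in P$ then $(UU')(Nv)(UU')^{-1}=U\bigl(N(U'v)\bigr)U^{-1}=N(UU'v)$, and applying the defining property of $U\in P$ to $U^{-1}v$ gives $N(U^{-1}v)=U^{-1}(Nv)U$, so $U^{-1}\in P$. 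Since any $U$ with $U(\omega)=\omega$ induces an element of $Sp(H)$ on $\widehat T_1/\widehat T_2=H$ (the induced map preserves the intersection pairing because $U$ fixes $\omega$), the group $\{U:U(\omega)=\omega\}$ is generated by ${\rm IA}_\omega(\widehat T)$ together with the diagonal extensions to $\widehat T$ of the elements of $Sp(H)$. It therefore suffices to show that $P$ contains both families.

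For the symplectic part, let $\sigma$ be the diagonal extension of some $s\in Sp(H)$. Since $N$ is built from cyclic permutations, which commute with the diagonal action, one has $\sigma\circ N=N\circ\sigma$ on $\widehat T$; and since $\sigma$ preserves the intersection pairing, the Poincar\'e-duality identification (\ref{T1-Hom}) is equivariant, where $\sigma$ acts diagonally on $H\otimes\widehat T$ and by conjugation on derivations. Combining these two facts gives $\sigma(Nv)\sigma^{-1}=N(\sigma v)$, so $\sigma\in P$.

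For the ${\rm IA}$ part, let $U_1\in{\rm IA}_\omega(\widehat T)$ and set $\delta:=\log U_1$. By the identification following (\ref{IA(T)}), the restriction of $\delta$ to $H$ is a tensor lying in $N(\widehat T_3)$; hence $\delta=N(w)$ for some $w\in\widehat T_3\subset\widehat T_1$, and in particular $\delta\in\mathfrak a_g^-=N(\widehat T_1)$. Then $U_1(Nv)U_1^{-1}=e^{{\rm ad}_\delta}(Nv)=\sum_{k\ge 0}\frac1{k!}({\rm ad}_\delta)^k(Nv)$, the series converging in the filtration topology because $\delta$ raises the filtration. An induction on $k$, applying Corollary \ref{Nuv} at each stage with $\delta^k(v)\in\widehat T_1$ in the role of $v$, gives $({\rm ad}_\delta)^k(Nv)=N(\delta^k(v))$, where $\delta^k(v)$ means $\delta$ applied $k$ times to the tensor $v$. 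Summing over $k$ and using the linearity and continuity of $N$, $U_1(Nv)U_1^{-1}=N\bigl(\sum_{k\ge 0}\frac1{k!}\delta^k(v)\bigr)=N\bigl(e^\delta(v)\bigr)=N(U_1v)$, so $U_1\in P$. Combined with the symplectic case, this proves the proposition.

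The step I expect to be most delicate is the verification, inside the ${\rm IA}$ case, that $\delta=\log U_1$ really is of the form $N(w)$ — hence a symplectic derivation on which Corollary \ref{Nuv} can act — since this is precisely where the hypothesis $U_1(\omega)=\omega$ is used, via the identification ${\rm IA}_\omega(\widehat T)\cong N(\widehat T_3)$. The remaining technical points, namely convergence of the exponential and ${\rm ad}$ series and the legitimacy of pulling $N$ through an infinite sum, are routine once one notes that $\delta$ raises the filtration.
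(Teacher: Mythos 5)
Your proof is correct and follows essentially the same route as the paper: you reduce to the two cases of a diagonal (symplectic) automorphism and an element of ${\rm IA}_\omega(\widehat{T})$ — which is exactly the paper's factorization $U=U'\circ|U|$ — handle the first by equivariance of $N$ and of the identification (\ref{T1-Hom}) under $Sp(H)$, and the second by iterating Corollary \ref{Nuv} on $\log U$ and exponentiating. Your explicit remarks that the identity is multiplicative in $U$ and that $\log U_1\in N(\widehat{T}_3)\subset\mathfrak{a}_g^-$ (so that Corollary \ref{Nuv} applies) make precise two points the paper leaves implicit.
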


\begin{proof}
Let $|U|$ be the element of ${\rm Aut}(\widehat{T})$ induced from
the action of $U$ on $\widehat{T}_1/\widehat{T}_2 \cong H$.
Then $|U|$ preserves the intersection form $\omega$, and
$U=U^{\prime}\circ |U|$, where $U^{\prime} \in {\rm IA}_{\omega}(\widehat{T})$.
Therefore, it suffices to prove the formula for $U=|U|$ and $U\in {\rm IA}_{\omega}(\widehat{T})$.

Suppose $U=|U|$.
We may assume $v$ is homogeneous and $v=X_1 \cdots X_m$,
where $m\ge 1$ and $X_i \in H$. Then for $Y\in H$,
\begin{eqnarray*}
U(Nv)U^{-1} (Y) &=& U (N(X_1\cdots X_m)(U^{-1}Y)) \\
&=& U(\textstyle{\sum_i} (U^{-1}Y\cdot X_i) X_{i+1}\cdots X_m X_1\cdots X_{i-1}) \\
&=& \sum_i (Y\cdot UX_i) U(X_{i+1}\cdots X_m X_1 \cdots X_{i-1}) \\
&=& N(U(X_1 \cdots X_m))(Y)=N(Uv)(Y),
\end{eqnarray*}
hence $U(Nv)U^{-1}=N(Uv)$. Suppose $U\in {\rm IA}_{\omega}(\widehat{T})$.
By Corollary \ref{Nuv}, ${\rm ad}(\log U)(Nv)=[\log U,Nv]=N(\log U(v))$,
hence ${\rm ad}(\log U)^k(Nv)=N((\log U)^k v)$, $k\ge 0$. Then
\begin{eqnarray*}
U(Nv)U^{-1} &=& e^{\log U} \circ Nv \circ e^{-\log U}=
\sum_{k=0}^{\infty} \frac{1}{k!}{\rm ad}(\log U)^k (Nv) \\
&=& \sum_{k=0}^{\infty} \frac{1}{k!}N((\log U)^k v)=N(e^{\log U} v)=N(Uv).
\end{eqnarray*}
This completes the proof.
\end{proof}

\subsection{The Goldman Lie algebra and Kontsevich's
``associative"}
Let $\hat{\pi}$ be the set of conjugacy classes of $\pi$.
In other words, $\hat{\pi}$ is the set of free homotopy classes
of oriented loops on $\Sigma$. The Goldman Lie algebra \cite{Go}
of $\Sigma$ is the vector space $\mathbb{Q}\hat{\pi}$ spanned by $\pi$,
equipped with the bracket defined as follows.
Let $\alpha,\beta$ be immersed loops on $\Sigma$
such that their intersections consist of transverse double points.
For each $p\in \alpha \cap \beta$, let $|\alpha_p\beta_p|$
be the free homotopy class of the loop first going the oriented
loop $\alpha$ based at $p$, then going $\beta$ based at $p$.
Also let $\varepsilon(p;\alpha,\beta)\in \{ \pm 1\}$ be the
local intersection number of $\alpha$ and $\beta$ at $p$.
Then the bracket of $\alpha$ and $\beta$ is given by
$$[\alpha, \beta]:=\sum_{p\in \alpha \cap \beta}
\varepsilon(p;\alpha,\beta) |\alpha_p\beta_p| \in
\mathbb{Q}\hat{\pi}.$$
Remark that if the loops $\alpha$ and $\beta$ are
freely homotopic to disjoint curves, then $[\alpha,\beta]=0$.

We recall a main result of \cite{KK}, which relates the Goldman Lie
algebra of $\Sigma$ and Kontsevich's ``associative". Actually, we can give
a slightly generalized form by virtue of Proposition \ref{compatibleU}.

\begin{thm}[\cite{KK} Theorem 1.2.1]
\label{GLAass}
Let $\theta$ be a Magnus expansion of $\pi$
satisfying $\theta(\zeta)=\exp(\omega)$. Then the map
$$-N\theta \colon \mathbb{Q}\hat{\pi} \to N(\widehat{T}_1)=\mathfrak{a}_g^-,
\pi \ni x \mapsto -N\theta(x) \in N(\widehat{T}_1)$$
is a well-defined Lie algebra homomorphism. The kernel is the subspace
$\mathbb{Q}1$ spanned by the constant loop $1$, and the image is dense
in $N(\widehat{T}_1)=\mathfrak{a}_g^-$ with respect to the
$\widehat{T}_1$-adic topology.
\end{thm}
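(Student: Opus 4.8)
The plan is to establish the three assertions of Theorem~\ref{GLAass} --- well-definedness, the Lie algebra homomorphism property, and the description of kernel and image --- separately, keeping track of where the weaker hypothesis $\theta(\zeta)=\exp(\omega)$ suffices in place of the full group-like condition. For well-definedness, note first that $\theta(x)-1\in\widehat{T}_1$ and $N(1)=0$, so $N\theta(x)=N(\theta(x)-1)\in N(\widehat{T}_1)$; moreover, for $g,x\in\pi$, multiplicativity of $\theta$ together with Lemma~\ref{propN}(1) gives
$$N\theta(gxg^{-1})=N\bigl(\theta(g)\,\theta(x)\,\theta(g)^{-1}\bigr)=N\bigl(\theta(x)\,\theta(g)^{-1}\,\theta(g)\bigr)=N\theta(x),$$
so $N\theta$ factors through $\mathbb{Q}\hat{\pi}$. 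Extending $\mathbb{Q}$-linearly yields the map $-N\theta\colon\mathbb{Q}\hat{\pi}\to N(\widehat{T}_1)=\mathfrak{a}_g^-$.

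The heart of the proof is the bracket identity $-N\theta([\alpha,\beta])=[-N\theta(\alpha),-N\theta(\beta)]$, where the right-hand bracket is that of $\mathfrak{a}_g^-$. By Corollary~\ref{Nuv} applied to $v=\theta(\beta)-1\in\widehat{T}_1$ (derivations annihilate constants), the right-hand side equals $N\bigl(N\theta(\alpha)(\theta(\beta))\bigr)$, where $N\theta(\alpha)(\theta(\beta))$ denotes the derivation $N\theta(\alpha)\in\mathfrak{a}_g^-$ evaluated on the tensor $\theta(\beta)$. Hence it suffices to prove, for immersed representatives of $\alpha,\beta$ meeting in transverse double points and based loops $\alpha_p,\beta_p$ at each $p\in\alpha\cap\beta$, the ``pointwise'' identity
$$\sum_{p\in\alpha\cap\beta}\varepsilon(p;\alpha,\beta)\,\theta(\alpha_p\beta_p)\ \equiv\ -\,N\theta(\alpha)\bigl(\theta(\beta)\bigr)\pmod{{\rm Ker}\,N}.$$
I would prove this by cutting $\beta$ into arcs along the double points, expanding $N\theta(\alpha)(\theta(\beta))$ by the Leibniz rule along this decomposition, and computing each local contribution through the identification~(\ref{T1-Hom}) of $N\theta(\alpha)$ with an element of ${\rm Hom}(H,\widehat{T})$ built from intersection pairings: every double point $p$ then produces exactly one term, to be matched with $\varepsilon(p;\alpha,\beta)\,\theta(\alpha_p\beta_p)$, the ambiguity in the choice of cut points for the based loops being absorbed by the ``rotation'' identity $N([u,v]w)=N(u[v,w])$ of Lemma~\ref{propN}(2). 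Carrying out this term-by-term comparison is the main obstacle; it is the substance of \cite{KK} Theorem~1.2.1. The single point at which \cite{KK} uses the group-like hypothesis --- commuting $N$ past an $\omega$-preserving automorphism --- is now covered by Proposition~\ref{compatibleU}, whose only hypothesis $U(\omega)=\omega$ holds under the sole assumption $\theta(\zeta)=\exp(\omega)$.

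For the kernel, clearly $N\theta(|1|)=N(1)=0$. Conversely, let $q\colon\widehat{T}\to\widehat{T}/\overline{[\widehat{T},\widehat{T}]}$ be the projection onto cyclic words; since $q$ identifies all cyclic permutations of a word, $q\circ N$ acts on $H^{\otimes m}$ as multiplication by $m$, so that $q(N\theta(\xi))=\sum_{m\ge1}m\cdot(\text{degree-}m\text{ part of }q(\theta(\xi)))$. If $N\theta(\xi)=0$ this forces $q(\theta(\xi))\in\mathbb{Q}1$; since the cyclic-word invariant $\xi\mapsto q(\theta(\xi))$ is injective on $\mathbb{Q}\hat{\pi}$ --- a known consequence of the structure of the free group $\pi$, cf.\ \cite{KK} --- and carries $\mathbb{Q}1$ onto $\mathbb{Q}1$, we conclude $\xi\in\mathbb{Q}1$. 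For the image, $N$ preserves the filtration of $\widehat{T}$ and is therefore continuous; since $\mathbb{Q}\pi$ is dense in $\widehat{\mathbb{Q}\pi}$ and $\theta$ is a topological isomorphism, $\theta(\mathbb{Q}\pi)$ is dense in $\widehat{T}$, whence the image $N\theta(\mathbb{Q}\hat{\pi})=N\theta(\mathbb{Q}\pi)$ is dense in $N(\widehat{T})=N(\widehat{T}_1)=\mathfrak{a}_g^-$, the identification $N(\widehat{T}_1)=\mathfrak{a}_g^-$ being the one recorded just before the statement.
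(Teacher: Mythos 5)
Your proof takes a genuinely different route from the paper's, and in doing so it opens a gap at the central step. The paper does \emph{not} attempt to establish the bracket identity directly for an arbitrary Magnus expansion $\theta$ with $\theta(\zeta)=\exp(\omega)$. It takes \cite{KK} Theorem 1.2.1 as given \emph{for symplectic expansions only} (explicitly remarking that the proof there uses (co)homology of Hopf algebras and therefore needs the group-like condition), and then transfers the statement to a general $\theta'$ with $\theta'(\zeta)=\exp(\omega)$ by writing $\theta'=U\circ\theta$ with $\theta$ symplectic and $U\in{\rm IA}_\omega(\widehat{T})$, and observing via Proposition~\ref{compatibleU} that $-N\theta'(x)=U\circ(-N\theta(x))\circ U^{-1}$; since conjugation by $U$ is a Lie algebra automorphism of $\mathfrak{a}_g^-$, the homomorphism property, the kernel, and the density of the image all carry over. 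Your version instead asserts that ``the single point at which \cite{KK} uses the group-like hypothesis'' is commuting $N$ past an $\omega$-preserving automorphism, and that this is covered by Proposition~\ref{compatibleU}. That assertion is unsubstantiated and contradicts the paper's own remark about where group-likeness enters in \cite{KK}; you cannot invoke \cite{KK} Theorem 1.2.1 for a non-group-like expansion, and your sketch of the term-by-term geometric comparison (cutting $\beta$ at the double points, matching local contributions) is explicitly left incomplete, so the bracket identity is not actually established for the $\theta$ you are working with. The fix is exactly the conjugation reduction above.

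Two smaller points. Your well-definedness argument ($N\theta(gxg^{-1})=N\theta(x)$ via Lemma~\ref{propN}(1)) is fine and matches what one would do. Your kernel argument, however, rests on the claim that the cyclic-word invariant $\xi\mapsto q(\theta(\xi))$ is injective on $\mathbb{Q}\hat{\pi}$, which you label ``a known consequence'' without proof; this is a nontrivial statement (it is essentially the content of the kernel computation in \cite{KK}), so as written the kernel determination is circular. Both the kernel and the density statements are obtained for free once you adopt the paper's reduction, since $D\mapsto U\circ D\circ U^{-1}$ is a filtration-preserving automorphism of $\mathfrak{a}_g^-$.
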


\begin{proof}
If $\theta$ is symplectic, this is \cite{KK} Theorem 1.2.1.
We just remark that in the proof of \cite{KK} Theorem 1.2.1,
we use (co)homology theory of Hopf algebras, hence we need
$\theta$ to be group-like.

Fix a symplectic expansion $\theta$ and
let $\theta^{\prime}$ be a Magnus expansion satisfying
$\theta^{\prime}(\zeta)=\exp(\omega)$. Then there exists
$U\in {\rm IA}_{\omega}(\widehat{T})$ such that $\theta^{\prime}=
U\circ \theta$ (see \cite{KK} \S2.8). The map
$\mathfrak{a}_g^-\to \mathfrak{a}_g^-,\ D\mapsto U\circ D\circ U^{-1}$
is a Lie algebra automorphism, and for any $x\in \pi$ we have
$$U\circ (-N\theta(x))\circ U^{-1}=-N(U\theta(x))=-N\theta^{\prime}(x),$$
by Proposition \ref{compatibleU}. This completes the proof.
\end{proof}

There is an action of the mapping class group $\mathcal{M}_{g,1}$
on the Goldman Lie algebra $\mathbb{Q}\hat{\pi}$. The action
is induced from the action on $\pi$.

\begin{thm}
\label{Mgaction}
Let $\theta$ be a Magnus expansion of $\pi$
satisfying $\theta(\zeta)=\exp(\omega)$.
Then the mapping class group $\mathcal{M}_{g,1}$ acts
on $\mathfrak{a}_g^-$ by $f\cdot D=T^{\theta}(f) \circ D \circ T^{\theta}(f)^{-1}$,
where $f\in \mathcal{M}_{g,1}$, $D\in \mathfrak{a}_g^-$.
Moreover, the Lie algebra homomorphism
$-N\theta \colon \mathbb{Q}\hat{\pi} \to \mathfrak{a}_g^-$
in Theorem \ref{GLAass} is
$\mathcal{M}_{g,1}$-equivariant.
\end{thm}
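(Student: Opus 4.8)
The plan is to reduce both assertions to a single fact: the total Johnson map $T^{\theta}(f)$ fixes the symplectic form $\omega\in H^{\otimes 2}\subset\widehat{T}_2$. I would establish this first. Since $f\in\mathcal{M}_{g,1}$ fixes $\zeta$, the defining relation $T^{\theta}(f)\circ\theta=\theta\circ f$ gives $T^{\theta}(f)(\exp(\omega))=T^{\theta}(f)(\theta(\zeta))=\theta(f(\zeta))=\theta(\zeta)=\exp(\omega)$. Being a filter-preserving (hence continuous) algebra automorphism, $T^{\theta}(f)$ commutes with the convergent series $\exp$, so $\exp(T^{\theta}(f)(\omega))=\exp(\omega)$; since $\exp\colon\widehat{T}_1\to 1+\widehat{T}_1$ is a bijection with inverse $\log$ and both $T^{\theta}(f)(\omega)$ and $\omega$ lie in $\widehat{T}_2\subset\widehat{T}_1$, this forces $T^{\theta}(f)(\omega)=\omega$. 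This is the only non-formal point, and it is a two-line computation; note that it uses only $\theta(\zeta)=\exp(\omega)$ and not the full group-like condition, which is precisely the hypothesis of the theorem and mirrors the hypothesis $U(\omega)=\omega$ of Proposition \ref{compatibleU}.

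Granting this, the first assertion is formal. Conjugation by an algebra automorphism carries derivations to derivations and respects the commutator bracket $[D_1,D_2]=D_1D_2-D_2D_1$, and $T^{\theta}(f)$ being filter-preserving keeps the conjugate continuous; hence $D\mapsto T^{\theta}(f)\circ D\circ T^{\theta}(f)^{-1}$ is a Lie algebra automorphism of ${\rm Der}(\widehat{T})$. It preserves $\mathfrak{a}_g^-={\rm Der}_{\omega}(\widehat{T})$: using $T^{\theta}(f)^{-1}(\omega)=\omega$, we get $(T^{\theta}(f)\circ D\circ T^{\theta}(f)^{-1})(\omega)=T^{\theta}(f)(D(\omega))=0$ whenever $D(\omega)=0$. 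The action axioms then follow from $T^{\theta}$ being a group homomorphism: $(fg)\cdot D=T^{\theta}(f)T^{\theta}(g)\,D\,T^{\theta}(g)^{-1}T^{\theta}(f)^{-1}=f\cdot(g\cdot D)$ and ${\rm id}\cdot D=D$.

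For the equivariance of $-N\theta$, I would verify the identity on a representative loop $x\in\pi$, which suffices because $-N\theta$ is constant on conjugacy classes (Theorem \ref{GLAass}) and $f$ carries the class of $x$ to the class of $f(x)$. Write $U=T^{\theta}(f)$; it satisfies $U(\omega)=\omega$ by the first step, so Proposition \ref{compatibleU} applied to $v=\theta(x)-1\in\widehat{T}_1$, together with $N(1)=0$ and the linearity of $N$, yields $U(N\theta(x))U^{-1}=N(U\theta(x))$. Since $U\theta(x)=T^{\theta}(f)(\theta(x))=\theta(f(x))$, this reads $f\cdot(-N\theta(x))=-U(N\theta(x))U^{-1}=-N\theta(f(x))$, which is exactly the $\mathcal{M}_{g,1}$-equivariance of $-N\theta$ on $x$; it extends to all of $\mathbb{Q}\hat{\pi}$ by linearity. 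Thus essentially all the content sits in Proposition \ref{compatibleU} together with the $\omega$-fixing fact, and I expect no real obstacle beyond this bookkeeping.
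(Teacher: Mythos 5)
Your proposal is correct and follows essentially the same route as the paper: derive $T^{\theta}(f)(\omega)=\omega$ from $T^{\theta}(f)(\theta(\zeta))=\theta(\zeta)$ and $\theta(\zeta)=\exp(\omega)$, then invoke Proposition \ref{compatibleU} to get $T^{\theta}(f)\circ N\theta(x)\circ T^{\theta}(f)^{-1}=N(\theta(f(x)))$. Your extra care (checking well-definedness of the action directly, and applying Proposition \ref{compatibleU} to $v=\theta(x)-1\in\widehat{T}_1$ rather than to $\theta(x)$ itself, using $N(1)=0$) only makes explicit details the paper leaves implicit.
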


\begin{proof}
It suffices to prove that $-N\theta \colon \mathbb{Q}\hat{\pi} \to \mathfrak{a}_g^-$
is $\mathcal{M}_{g,1}$-equivariant. Let $x\in \pi$ and $f\in \mathcal{M}_{g,1}$.
Since $\theta(\zeta)=e^{\omega}$, the total Johnson map $T^{\theta}(f)$ satisfies
$T^{\theta}(f)e^{\omega}=T^{\theta}(f)\theta(\zeta)
=\theta(f(\zeta))=\theta(\zeta)=e^{\omega}$, hence
$T^{\theta}(f)\omega=\omega$.
By Proposition \ref{compatibleU}, we have
$$-N\theta(f(x)) = -N(T^{\theta}(f)\theta(x))
=T^{\theta}(f) \circ (-N\theta(x)) \circ T^{\theta}(f)^{-1}.$$
This completes the proof.
\end{proof}

\section{A generalization of Dehn twists}

In this section we first recall another main result of \cite{KK}, which
describes the total Johnson map of the Dehn twist
along a simple closed curve. Motivated by this result,
we introduce a generalization of Dehn twists for not
necessarily simple loops on $\Sigma$, as automorphisms
of the completed group ring of $\pi$.

\subsection{The logarithms of Dehn twists}
For a Magnus expansion $\theta$ of $\pi$, we denote
$\ell^{\theta}:=\log \theta$. This is a map from
$\pi$ to $\widehat{T}_1$. Note that the
logarithm is defined on the set $1+\widehat{T}_1$.

\begin{dfn}
For $x\in \pi$, set
$$L^{\theta}(x):=\frac{1}{2}N(\ell^{\theta}(x)
\ell^{\theta}(x)) \in \widehat{T}_2.$$
\end{dfn}

In fact, $L^{\theta}$ descends to an invariant
of unoriented loops on $\Sigma$.

\begin{lem}[\cite{KK} Lemma 2.6.4]
For $x,y\in \pi$, we have
\begin{enumerate}
\item $L^{\theta}(x^{-1})=L^{\theta}(x)$,
\item $L^{\theta}(yxy^{-1})=L^{\theta}(x)$.
\end{enumerate}
\end{lem}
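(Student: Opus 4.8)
The plan is to establish both identities by reducing them to properties of the map $N$ and the homomorphism property of a Magnus expansion. For (1), I would use the fact that $\ell^\theta(x^{-1}) = \log\theta(x^{-1}) = \log\bigl(\theta(x)^{-1}\bigr)$, and that if $u = \ell^\theta(x) \in \widehat{T}_1$, then $\log\bigl((1+ (\text{stuff}))^{-1}\bigr)$ satisfies $\log(\theta(x)^{-1}) = -\log\theta(x) = -u$ whenever $\theta(x) = \exp(u)$, which holds since $\ell^\theta$ is defined as $\log\theta$. Hence $\ell^\theta(x^{-1}) = -\ell^\theta(x)$. Then
$$L^\theta(x^{-1}) = \tfrac12 N\bigl(\ell^\theta(x^{-1})\ell^\theta(x^{-1})\bigr) = \tfrac12 N\bigl((-u)(-u)\bigr) = \tfrac12 N(uu) = L^\theta(x),$$
using bilinearity of the tensor product and linearity of $N$. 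This part is essentially immediate.

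For (2), write $x' = yxy^{-1}$ and set $u = \ell^\theta(x)$, $w = \ell^\theta(y)$. Since $\theta$ is multiplicative, $\theta(x') = \theta(y)\theta(x)\theta(y)^{-1}$, so $\theta(x')$ is the conjugate of $\theta(x) = \exp(u)$ by the invertible element $\theta(y)$; hence $\theta(x') = \exp\bigl(\theta(y)\, u\, \theta(y)^{-1}\bigr)$ and therefore $\ell^\theta(x') = \theta(y)\,u\,\theta(y)^{-1}$ — this is a conjugate of $u$ by a unit of $\widehat{T}$, call it $g := \theta(y)$, so $\ell^\theta(x') = g u g^{-1}$. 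Now I compute
$$L^\theta(x') = \tfrac12 N\bigl(g u g^{-1}\, g u g^{-1}\bigr) = \tfrac12 N\bigl(g\, (uu)\, g^{-1}\bigr).$$
The key step is to show $N\bigl(g\,(uu)\,g^{-1}\bigr) = N(uu)$. This follows from Lemma \ref{propN}(1): for any $a,b \in \widehat{T}$ one has $N(ab) = N(ba)$, so taking $a = g(uu)$ and $b = g^{-1}$ gives $N\bigl(g(uu)g^{-1}\bigr) = N\bigl(g^{-1}g(uu)\bigr) = N(uu)$. Hence $L^\theta(x') = \tfrac12 N(uu) = L^\theta(x)$.

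The only point requiring a little care — and the one I would write out — is the justification that $\ell^\theta(x^{-1}) = -\ell^\theta(x)$ and $\ell^\theta(yxy^{-1}) = \theta(y)\ell^\theta(x)\theta(y)^{-1}$, i.e.\ that $\log$ converts the group operations into the corresponding operations on $\widehat{T}_1$; this is a formal consequence of $\theta$ being a Magnus expansion together with the fact that $\log$ and $\exp$ are mutually inverse bijections between $1+\widehat{T}_1$ and $\widehat{T}_1$, and that $\log(g a g^{-1}) = g(\log a)g^{-1}$ for $a \in 1+\widehat{T}_1$ and $g$ a unit (expand $\log$ as a power series in $a-1$). There is no real obstacle here; the substance of the lemma is entirely carried by the cyclic-invariance property $N(ab)=N(ba)$.
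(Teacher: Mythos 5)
Your proof is correct, and the key identities you isolate ($\ell^\theta(x^{-1})=-\ell^\theta(x)$, $\ell^\theta(yxy^{-1})=\theta(y)\ell^\theta(x)\theta(y)^{-1}$, and the cyclic invariance $N(ab)=N(ba)$ from Lemma \ref{propN}) are exactly what carries the statement. The paper itself gives no proof here — it simply cites \cite{KK} Lemma 2.6.4 — and your argument is the standard one that reference uses, so there is nothing to compare beyond noting that your write-up is complete and self-contained.
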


Hence for any unoriented loop $\gamma \subset \Sigma$, we can define
$L^{\theta}(\gamma)$ as $L^{\theta}(x)$, where $x\in \pi$
is freely homotopic to $\gamma$.
As in \S2.2, we regard $L^{\theta}(x)\in \widehat{T}_2$
as a derivation on $\widehat{T}$.

For a simple closed curve $C$ on $\Sigma$, we denote by
$t_C\in \mathcal{M}_{g,1}$ the right handed Dehn twist along $C$.
The following was proved in \cite{KK}. In fact what is given
here is a slightly generalized form.

\begin{thm}[\cite{KK}, Theorem 1.1.1]
\label{logDT}
Let $\theta$ be a Magnus expansion of $\pi$ satisfying
$\theta(\zeta)=\exp(\omega)$, and
$C$ a simple closed curve on $\Sigma$. Then
the total Johnson map $T^{\theta}(t_C)$ is
described as
$$T^{\theta}(t_C)=e^{-L^{\theta}(C)}.$$
Here, the right hand side is the algebra
automorphism of $\widehat{T}$ defined by the
exponential of the derivation $-L^{\theta}(C)$.
\end{thm}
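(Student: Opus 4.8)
The plan is to deduce the statement from the case of a symplectic expansion, which is exactly \cite{KK} Theorem 1.1.1, by the same device that proves Theorem \ref{GLAass}: two Magnus expansions both sending $\zeta$ to $\exp(\omega)$ differ by a conjugation by an element of ${\rm IA}_{\omega}(\widehat{T})$, and --- thanks to Proposition \ref{compatibleU} --- both sides of the asserted identity transform the same way under such a conjugation. I therefore take \cite{KK} Theorem 1.1.1 for granted and only carry out the reduction.

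Fix a symplectic expansion $\theta_0$ of $\pi$. Given any Magnus expansion $\theta$ with $\theta(\zeta)=\exp(\omega)$, there is $U\in {\rm IA}_{\omega}(\widehat{T})$ with $\theta=U\circ\theta_0$ (see \cite{KK} \S2.8), so in particular $U(\omega)=\omega$. For the left-hand side, the defining equation $T^{\theta}(t_C)\circ\theta=\theta\circ t_C$ gives
$$T^{\theta}(t_C)=\theta\circ t_C\circ\theta^{-1}=U\circ(\theta_0\circ t_C\circ\theta_0^{-1})\circ U^{-1}=U\circ T^{\theta_0}(t_C)\circ U^{-1}.$$
For the right-hand side, since $U$ is a filtration-preserving algebra automorphism it commutes with $\log$ on $1+\widehat{T}_1$, hence $\ell^{\theta}(x)=\log U(\theta_0(x))=U(\ell^{\theta_0}(x))$ for every $x\in\pi$. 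Taking $x$ freely homotopic to $C$ and setting $v:=\tfrac12\,\ell^{\theta_0}(C)\,\ell^{\theta_0}(C)\in\widehat{T}_2$, we obtain $L^{\theta}(C)=\tfrac12 N\!\big(U(\ell^{\theta_0}(C))\,U(\ell^{\theta_0}(C))\big)=N(Uv)$, which by Proposition \ref{compatibleU} equals $U\circ N(v)\circ U^{-1}=U\circ L^{\theta_0}(C)\circ U^{-1}$ as derivations of $\widehat{T}$. Exponentiating (all series converge in the $\widehat{T}_1$-adic topology) gives $e^{-L^{\theta}(C)}=U\circ e^{-L^{\theta_0}(C)}\circ U^{-1}$, and combining this with the previous display and the symplectic case $T^{\theta_0}(t_C)=e^{-L^{\theta_0}(C)}$ yields $T^{\theta}(t_C)=e^{-L^{\theta}(C)}$, as desired.

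The routine verifications are that $U$ commutes with the logarithm (immediate from $\log(1+a)=\sum_{k\ge1}(-1)^{k+1}a^k/k$ and continuity of $U$), that conjugation by $U$ commutes with the exponential of a derivation, and that $L^{\theta}(C)$ does not depend on the orientation or the conjugacy representative of $C$ (the invariance lemma preceding the statement). The genuine difficulty lies not here but in the assumed input \cite{KK} Theorem 1.1.1: proving $T^{\theta_0}(t_C)=e^{-L^{\theta_0}(C)}$ for a symplectic expansion requires computing the action of $t_C$ on $\pi$ and matching the resulting total Johnson map with the exponential of $\tfrac12 N(\ell^{\theta_0}(C)^2)$, and that argument uses group-likeness essentially. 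The content of the present formulation is precisely that, once the symplectic case is known, the identity is \emph{forced} for every Magnus expansion sending $\zeta$ to $\exp(\omega)$.
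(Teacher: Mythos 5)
Your proposal is correct and follows essentially the same route as the paper's own proof: reduce to the symplectic case of \cite{KK} Theorem 1.1.1 by writing $\theta=U\circ\theta_0$ with $U\in{\rm IA}_{\omega}(\widehat{T})$, and use Proposition \ref{compatibleU} to show that both $T^{\theta}(t_C)$ and $e^{-L^{\theta}(C)}$ are the $U$-conjugates of their symplectic counterparts. The only cosmetic difference is that you obtain $T^{\theta}(t_C)=U\circ T^{\theta_0}(t_C)\circ U^{-1}$ by direct conjugation through the isomorphisms $\theta,\theta_0\colon\widehat{\mathbb{Q}\pi}\to\widehat{T}$, whereas the paper verifies the same identity from the defining relation together with density of the span of $\theta(\pi)$.
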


\begin{proof}
If $\theta$ is symplectic, this is \cite{KK} Theorem 1.1.1.
Fix a symplectic expansion $\theta$ and let $\theta^{\prime}$
be a Magnus expansion satisfying $\theta^{\prime}(\zeta)=\exp(\omega)$.
As in the proof of Theorem \ref{GLAass}, there exists
$U\in {\rm IA}_{\omega}(\widehat{T})$ such that $\theta^{\prime}=
U\circ \theta$. Note that this condition implies $\ell^{\theta^{\prime}}
=U\circ \ell^{\theta}$. Then for any $\varphi\in \mathcal{M}_{g,1}$, we have
$T^{\theta^{\prime}}(\varphi)=U\circ T^{\theta}(\varphi) \circ U^{-1}$,
since $T^{\theta^{\prime}}(\varphi) \circ \theta^{\prime}
=\theta^{\prime} \circ \varphi=U\circ \theta \circ \varphi
=U\circ T^{\theta}(\varphi)\circ \theta=U\circ T^{\theta}(\varphi)
\circ U^{-1} \circ \theta^{\prime}$ and the linear span of the image $\theta^{\prime}(\pi)$
is dense in $\widehat{T}$ (see \cite{Kaw1}). On the other
hand, for $x\in \pi$ we have
\begin{eqnarray*}
L^{\theta^{\prime}}(x)&=& \frac{1}{2}N(\ell^{\theta^{\prime}}(x)
\ell^{\theta^{\prime}}(x))
=\frac{1}{2}N(U\ell^{\theta}(x)U\ell^{\theta}(x))
=\frac{1}{2}N(U(\ell^{\theta}(x)\ell^{\theta}(x))) \\
&=& U\circ \left( \frac{1}{2}N(\ell^{\theta}(x)\ell^{\theta}(x)) \right) \circ U^{-1}
=U\circ L^{\theta}(x) \circ U^{-1}
\end{eqnarray*}
by Proposition \ref{compatibleU},
hence $e^{-L^{\theta^{\prime}}(C)}=U\circ e^{-L^{\theta}(C)} \circ U^{-1}$.
Now the formula for $\theta^{\prime}$ follows from the formula
for $\theta$.
\end{proof}

\begin{rem}
\label{another}
{\rm Fix a Magnus expansion $\theta$ of $\pi$ satisfying $\theta(\zeta)=\exp(\omega)$.
Let $f\in \mathcal{M}_{g,1}$ and $C$ a simple closed curve on $\Sigma$.
By an argument similar to the proof of $L^{\theta^{\prime}}(x)
=U\circ L^{\theta}(x) \circ U^{-1}$ above, we have
$L^{\theta}(f(C))=T^{\theta}(f) \circ L^{\theta}(C) \circ T^{\theta}(f)^{-1}$.
Therefore we might expect a possibility of another proof of \cite{KK} Theorem 1.1.1.
For example, let us restrict our attentions to non-separating simple closed curves.
Any two non-separating simple closed curves are in the same orbit
of the action of $\mathcal{M}_{g,1}$ on the set of unoriented loops
(up to homotopy) on $\Sigma$.
If one could prove the formula $T^{\theta}(t_C)=e^{-L^{\theta}(C)}$
for a particular choice of a Magnus expansion $\theta$ satisfying
$\theta(\zeta)=\exp(\omega)$, say one of the symplectic expansions
in \cite{Kaw2} \cite{Ku} \cite{Mas}, and a particular choice of a simple closed
curve $C$, then the formula for any such $\theta$ and any $C$ non-separating
follows.
}
\end{rem}

\subsection{Generalized Dehn twists}
Now we introduce generalized Dehn twists.
Let $\theta$ be a Magnus expansion of $\pi$ satisfying $\theta(\zeta)=\exp(\omega)$.
We denote by ${\rm Aut}(\widehat{\mathbb{Q}\pi})$ the group of
filter-preserving algebra automorphisms of $\widehat{\mathbb{Q}\pi}$,
which is isomorphic to ${\rm Aut}(\widehat{T})$ as a group through $\theta$.

Let $\gamma$ be an unoriented loop on $\Sigma$.
Then the exponential $e^{-L^{\theta}(\gamma)}$ of the
derivation $-L^{\theta}(\gamma)$ is a filter-preserving
algebra automorphism of $\widehat{T}$. Thus the map
$$t_{\gamma}:=\theta^{-1}\circ e^{-L^{\theta}(\gamma)} \circ \theta$$
lies in ${\rm Aut}(\widehat{\mathbb{Q}\pi})$.
As we see in the proof of Theorem \ref{logDT},
if $\theta^{\prime}$ is another Magnus expansion
satisfying $\theta^{\prime}(\zeta)=\exp(\omega)$
then $L^{\theta^{\prime}}(\gamma)=U \circ L^{\theta}(\gamma)\circ U^{-1}$,
where $U\in {\rm Aut}_{\omega}(\widehat{T})$ satisfies
$\theta^{\prime}=U\circ \theta$. This shows that $t_{\gamma}$
is actually independent of the choice of $\theta$.

\begin{dfn}
\label{GDT}
We call $t_{\gamma}\in {\rm Aut}(\widehat{\mathbb{Q}\pi})$
the generalized Dehn twist along $\gamma$.
\end{dfn}

We remark that the generalized Dehn twists have the following
natural property. Let $f\in \mathcal{M}_{g,1}$. Then we have
$L^{\theta}(f(\gamma))=T^{\theta}(f)\circ
L^{\theta}(\gamma) \circ T^{\theta}(f)^{-1}$
(see Remark \ref{another}), thus
$$t_{f(\gamma)}=f \circ t_{\gamma} \circ f^{-1}.$$

Since we have a natural injective homomorphism
$$\mathcal{M}_{g,1} \hookrightarrow {\rm Aut}(\pi)
\hookrightarrow {\rm Aut}(\widehat{\mathbb{Q}\pi}),$$
we can ask whether $t_{\gamma}$ gives an element of
$\mathcal{M}_{g,1}$.

\begin{dfn}
\label{defmap}
Let $\gamma$ be an unoriented loop on $\Sigma$.
We say $t_{\gamma}\in {\rm Aut}(\widehat{\mathbb{Q}\pi})$
is a mapping class if $t_{\gamma}$ lies in the
image of $\mathcal{M}_{g,1}$.
\end{dfn}

For example, for any simple closed curve $C$,
the generalized Dehn twist $t_C$ is a mapping class,
and it is the usual right handed Dehn twist
along $C$ by Theorem \ref{logDT}.
Moreover, for any power of $C$, the generalized
Dehn twist along it is a mapping class. For, we have
$L^{\theta}(C^m)=m^2L^{\theta}(C)$
hence $t_{C^m}=(t_C)^{m^2}$, where $m\in \mathbb{Z}$.

\subsection{The support of a generalized Dehn twist}

We shall give a criterion for the realizability of
$t_{\gamma}$ as a mapping class.
We use the following, which would be well-known
to experts. Similar statements can be
found in several literatures, but we do not find a
suitable reference.

\begin{lem}
\label{folklore}
Let $S$ be a compact connected oriented surface, and
$\varphi$ an orientation preserving diffeomorphism of
$S$ fixing the boundary $\partial S$ pointwise. If
$\varphi$ preserves every homotopy class of oriented
loops on $S$, then $\varphi$ is isotopic relative to the boundary
$\partial S$ to a product of boundary-parallel Dehn twists.
Here a boundary-parallel
Dehn twist is meant a Dehn twist along a simple
closed curve which is parallel to one of the components of $\partial S$.
\end{lem}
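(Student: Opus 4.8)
The plan is to reduce the statement to a known structural fact about mapping class groups, namely the computation of the kernel of the action of $\mathcal{M}(S)$ on the set of free homotopy classes of loops (equivalently, on $\pi_1(S)$ up to conjugacy). First I would observe that the hypothesis ``$\varphi$ preserves every homotopy class of oriented loops on $S$'' is a statement about the induced action on $\pi_1(S,*)$ after choosing a basepoint $*$ on $\partial S$: since $\varphi$ fixes $\partial S$ pointwise it induces a genuine automorphism $\varphi_*$ of $\pi_1(S,*)$, and the hypothesis says $\varphi_*$ sends every element to a conjugate of itself. The first substantive step is therefore purely group-theoretic: show that an automorphism of a finitely generated free group (or more generally of $\pi_1$ of a surface with boundary) that preserves every conjugacy class is inner. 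For a free group this is a classical fact; I would either cite it or give the short argument (e.g.\ via the observation that if $\varphi_*(x)=w_x x w_x^{-1}$ for all $x$, then examining $\varphi_*(xy)$ forces the $w_x$ to agree up to the centralizer, and in a free group centralizers are cyclic, pinning down a single conjugating element). Hence $\varphi_* = c_w$, conjugation by some fixed $w\in\pi_1(S,*)$.

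Next I would translate this back to the surface. Because $\varphi$ fixes $\partial S$ pointwise, $\varphi_*$ fixes the boundary classes; matching this with $\varphi_* = c_w$ shows $w$ must lie in the centralizer of each boundary-parallel element, which constrains $w$ to be a product of powers of the boundary loops (this is where the precise topology of $S$ enters — for a surface with $b$ boundary components and a single basepoint on one of them, the relevant centralizers are generated by the boundary curves). The upshot is that $\varphi$ acts on $\pi_1(S,*)$ exactly as a product $\prod_j t_{\partial_j S}^{n_j}$ of boundary-parallel Dehn twists does. The final step invokes Dehn--Nielsen--Baer (in the version for surfaces with boundary, relative to $\partial S$): the homomorphism $\mathcal{M}(S)\to \mathrm{Aut}(\pi_1(S,*))$ is injective, so two mapping classes inducing the same automorphism of $\pi_1$ are isotopic rel $\partial S$. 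Therefore $\varphi$ is isotopic rel $\partial S$ to $\prod_j t_{\partial_j S}^{n_j}$, which is the desired conclusion.

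The main obstacle I anticipate is the bookkeeping in the middle step: pinning down exactly which element $w$ can be, and identifying ``conjugation by a product of boundary powers'' with ``a product of boundary-parallel Dehn twists'' at the level of $\pi_1$. One has to be careful about basepoints (the single marked point on $\partial S$), about orientation conventions so that the signs $n_j$ come out consistently, and about the degenerate cases — e.g.\ an annulus, or a disk, where the statement is vacuous or trivial, and the case where $S$ has genus $0$ with few boundary components where $\mathcal{M}(S)$ is abelian and generated by boundary twists. A clean way to handle this uniformly is to note that a boundary-parallel Dehn twist $t_{\partial_j S}$ acts on $\pi_1(S,*)$ precisely as conjugation by the corresponding boundary element $\zeta_j$ (for the component containing $*$, it acts trivially, consistently with $t_{\partial S}=1$ in $\mathcal{M}_{g,1}$ when there is one boundary component), so the two sets of automorphisms coincide, and injectivity of Dehn--Nielsen closes the argument. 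I would also remark that in the application to this paper, $S$ is a regular neighborhood of $\gamma$ and the boundary-parallel curves of $S$ are exactly the $C_i$ appearing in Proposition~\ref{22-1}, which is why this lemma is the right tool.
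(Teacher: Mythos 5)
There is a genuine gap at the final step. You invoke ``Dehn--Nielsen--Baer for surfaces with boundary'' in the form ``$\mathcal{M}(S)\to \mathrm{Aut}(\pi_1(S,*))$ is injective, so two mapping classes inducing the same automorphism of $\pi_1$ are isotopic rel $\partial S$.'' This is false whenever $S$ has two or more boundary components, which is exactly the situation in which the lemma has nontrivial content and in which the paper applies it (the components of $\Sigma\setminus \mathrm{Int}(N(\gamma))$ have at least two boundary components). Concretely, on a pair of pants with basepoint $*$ on $\partial_1$, the twists $t_{\partial_2}$ and $t_{\partial_3}$ are supported in annuli disjoint from generating loops of $\pi_1(P,*)$, so they act as the \emph{identity} on $\pi_1(P,*)$ while being nontrivial in $\mathcal{M}(P)\cong\mathbb{Z}^3$. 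Your argument, taken at face value, would conclude that any $\varphi$ satisfying the hypothesis is isotopic to a power of the single twist about the component containing $*$, which is wrong ($t_{\partial_2}$ itself is a counterexample). The kernel of $\mathcal{M}(S)\to\mathrm{Aut}(\pi_1(S,*))$ is precisely the group generated by the twists about the boundary components not containing $*$, and identifying that kernel is essentially the content of the lemma you are trying to prove --- so the appeal to injectivity either fails or begs the question. (A second, smaller inaccuracy: for a boundary component $\partial_j$ not containing $*$, the diffeomorphism fixes $\partial_j$ pointwise but need not fix the \emph{based} class $\zeta_j$, only its conjugacy class; so your centralizer argument only pins $w$ down to a power of the boundary class through $*$.)

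Your first step --- reducing to the fact that a conjugacy-class-preserving automorphism of a free group is inner (Grossman's theorem) --- is legitimate and correctly shows that $\varphi_*$ is conjugation by a power of the boundary class at the basepoint, but this only places $\varphi$ in the kernel of $\mathcal{M}(S)\to\mathrm{Out}(\pi_1(S))$; a genuinely topological argument is still needed to show that this kernel is generated by boundary-parallel twists. The paper supplies exactly that via the Alexander method: it chooses a filling collection $\{C_i\}$ as in \cite{FM} Proposition 2.8, uses the hypothesis to isotope $\varphi$ so as to fix $\bigcup_i C_i$ pointwise (\cite{FM} Lemma 2.9), and then observes that the complement is a union of disks and boundary annuli, on which $\varphi$ restricts to the identity or to powers of boundary twists. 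Some version of this cut-and-paste step (or an explicit computation of the kernel of the action on $\pi_1$) is what your proposal is missing.
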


\begin{proof}
Let $g$ be the genus of $S$ and $r$ the number of the boundary components
of $S$. In case $S$ is the 2-sphere or a disk or an annulus,
the assertion is trivial. Thus we may assume if $g=0$, $r\ge 3$.
Then we can choose a collection $\{ C_i\}_i$ of $2g+r$ (if $g=0$, then $r-1$)
simple closed curves on $S$ satisfying the three properties from \cite{FM} Proposition 2.8,
see Figure 3. By \cite{FM} Lemma 2.9, we can deform $\varphi$ by an isotopy
into a diffeomorphism fixing the union $\bigcup_i C_i$ pointwise.
The complement $S\setminus \bigcup_i C_i$ is a disjoint union of
three disks and $r$ annuli. The restriction of $\varphi$ to each
disk component is isotopic relative to the boundary to the identity,
and the restriction to each annulus component is isotopic relative to
the boundary to a power of the Dehn twist along a simple closed curve
parallel to the boundary of the annulus.
Thus we can deform $\varphi$ by an isotopy into
a product of boundary-parallel Dehn twists. This completes the proof.
\end{proof}

\begin{center}
Figure 3: $\{ C_i \}_i$ for $g=2,r=3$

\input{figure3.tex}

\end{center}

\begin{thm}
\label{support}
Let $\gamma$ be an unoriented loop on $\Sigma$ and suppose
the generalized Dehn twist $t_{\gamma}$ is a mapping class.
Then $t_{\gamma}\in \mathcal{M}_{g,1}$ is represented by
a diffeomorphism whose support lies in a regular neighborhood of
$\gamma$.
\end{thm}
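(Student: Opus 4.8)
The plan is to compare $t_\gamma$ with a purely local automorphism and then apply Lemma \ref{folklore}. Write $\gamma$ up to free homotopy as an element of $\pi$, and let $\nu$ be a closed regular neighborhood of $\gamma$ in $\Sigma$; this is a compact subsurface, and the inclusion $\nu \hookrightarrow \Sigma$ is $\pi_1$-injective (it is a subsurface with boundary, so $\pi_1(\nu) \to \pi$ is injective after choosing a path to the basepoint). The key observation is that $L^\theta(\gamma)$, viewed as a derivation on $\widehat T \cong \widehat{\mathbb Q\pi}$, is supported on $\nu$ in the following sense: $L^\theta(\gamma)$ depends only on $\ell^\theta(x)$ for $x$ freely homotopic to $\gamma$, hence the automorphism $e^{-L^\theta(\gamma)}$ restricts to an automorphism of the completed group ring of $\pi_1(\nu)$ compatible with $\widehat{\mathbb Q\pi_1(\nu)} \hookrightarrow \widehat{\mathbb Q\pi}$. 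Thus $t_\gamma$ acts trivially (in an appropriate sense) on the part of $\pi$ coming from $\Sigma \setminus \nu$.

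Next I would make this precise by exhibiting a diffeomorphism. Since $t_\gamma$ is assumed to be a mapping class, pick $\varphi \in \mathcal M_{g,1}$ with $\varphi = t_\gamma$ as an element of ${\rm Aut}(\widehat{\mathbb Q\pi})$; by faithfulness of $\mathcal M_{g,1} \to {\rm Aut}(\pi)$, $\varphi$ is determined by its action on $\pi$, which agrees with the action of $e^{-L^\theta(\gamma)}$ transported by $\theta$. Now consider the closed surface-with-boundary $S := \overline{\Sigma \setminus \nu}$. I would argue that $\varphi$ can be isotoped to fix $S$ pointwise: on $\pi_1(S) \subset \pi$, $t_\gamma$ acts by conjugation by a power of the element $[\gamma'] \in \pi_1(\nu)$-type correction, and in fact $L^\theta(\gamma)$ preserves every conjugacy class coming from $S$ because such loops can be freely homotoped off $\nu$ and hence have zero intersection-type interaction with $\gamma$. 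Concretely, for $x \in \pi_1(S)$ the derivation $L^\theta(\gamma)$ annihilates $\ell^\theta(x)$ up to an inner term; one checks $t_\gamma(x) = x$ in $\pi$ for $x \in \pi_1(S)$ by using that $N(\ell^\theta(\gamma)\ell^\theta(\gamma))$ pairs (via the Poincaré duality identification \eqref{T1-Hom}) against $[x]$ only through intersection numbers $[x]\cdot[\gamma] = 0$. Since $\varphi$ fixes the conjugacy classes and the elements of $\pi_1(S)$, and fixes $\partial\Sigma$, a standard argument (change-of-coordinates / the Alexander method, e.g. as in the proof of Lemma \ref{folklore}) lets us isotope $\varphi$ to be the identity on $S$.

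Finally, with $\varphi$ isotoped to fix $S = \overline{\Sigma\setminus\nu}$ pointwise, the support of $\varphi$ lies in $\nu$, which is exactly the assertion. One then notes that any residual boundary-parallel Dehn twists along components of $\partial\nu$ produced along the way are themselves supported in $\nu$, so nothing is lost.

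The main obstacle I expect is the second step: showing rigorously that $t_\gamma$ fixes $\pi_1(\overline{\Sigma\setminus\nu})$ pointwise (not just up to conjugacy) and hence can be isotoped to the identity there. The homological computation $[x]\cdot[\gamma] = 0$ only shows the \emph{linear} (degree-one) part of the total Johnson map vanishes on such $x$; controlling all higher-degree terms of $e^{-L^\theta(\gamma)}$ on $\ell^\theta(x)$ requires knowing that $L^\theta(\gamma)$, as a derivation, actually lands inside the sub-Hopf-algebra associated to $\pi_1(\nu)$ and acts as an \emph{inner} derivation relative to the $\pi_1(S)$ part — this is where one must use that $L^\theta$ is genuinely an invariant of the free homotopy class of $\gamma$ together with the geometric fact that $\gamma$ and any loop in $S$ are freely homotopic to disjoint curves, so the relevant Goldman-type bracket vanishes and Theorem \ref{GLAass} forces the derivation to act trivially on that part.
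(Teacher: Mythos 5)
Your first step --- showing that $t_\gamma$ preserves the free homotopy class of every loop disjoint from $\gamma$ by expanding $-L^\theta(\gamma)=\sum_n a_n N\theta((\widetilde\gamma-1)^n)$, invoking the vanishing of the Goldman brackets $[\gamma^n,\delta]$ for disjoint loops, and feeding this into Theorem \ref{GLAass} --- is exactly the paper's opening claim, and that part of your outline is sound (provided you use the genuine geometric disjointness to kill all the brackets $[\gamma^n,\delta]$, not merely the homological intersection number $[x]\cdot[\gamma]=0$, which as you note controls only the degree-one part). But from there the proposal has two genuine gaps.

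First, your plan to isotope $\varphi$ to the \emph{identity} on $S=\overline{\Sigma\setminus\nu}$ cannot work as stated, and you correctly sense this is the weak point: the map $-N\theta$ factors through conjugacy classes (its kernel is $\mathbb{Q}1$), so the machinery available only shows that $t_\gamma$ preserves \emph{free} homotopy classes of loops in $S$, never that it fixes elements of $\pi_1(S)$ pointwise. A diffeomorphism of $S$ preserving every free homotopy class need not be isotopic to the identity rel $\partial S$ --- it is only a product of boundary-parallel Dehn twists (this is Lemma \ref{folklore}, which is what the paper uses instead). Your appeal to ``$e^{-L^\theta(\gamma)}$ restricts to an automorphism of $\widehat{\mathbb{Q}\pi_1(\nu)}$'' does not repair this; that restriction statement is neither established nor used in the paper, and it would not by itself give pointwise fixing on $\pi_1(S)$.

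Second, and more seriously, your closing remark that ``any residual boundary-parallel Dehn twists along components of $\partial\nu$ are themselves supported in $\nu$'' overlooks that $\partial S$ contains $\partial\Sigma$ as well as $\partial\nu$. Lemma \ref{folklore} leaves you with a factor $t_\zeta^m$, a power of the Dehn twist along a curve parallel to $\partial\Sigma$, which is a nontrivial mapping class \emph{not} supported in any neighborhood of $\gamma$. Ruling out $m\neq 0$ is the hardest part of the paper's proof: one picks a based loop $y$ in the component $\Sigma'$ of the complement containing $\partial\Sigma$ with $[y]\neq 0$ in $H$, derives $L^\theta(\gamma)\theta(y)=mL^\theta(\zeta)\theta(y)$, shows the left side vanishes by \cite{KK} Corollary 6.5.3, and shows $L^\theta(\zeta)\theta(y)\equiv \omega A_1-A_1\omega\neq 0$ modulo $\widehat{T}_4$, forcing $m=0$ (when $\Sigma'$ is not an annulus). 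Nothing in your proposal addresses this term, so the argument as written does not establish the theorem.
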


\begin{proof}
We claim that if $\delta$ is an oriented loop on $\Sigma$
disjoint from $\gamma$, then $t_{\gamma}(\delta)=\delta$.
Let $x\in \pi$ be a representative of $\delta$.
By Theorem \ref{Mgaction},
\begin{eqnarray*}
-N\theta(t_{\gamma}(x))=
&=& t_{\gamma} \cdot (-N\theta(x)) \\
&=& T^{\theta}(t_{\gamma}) \circ (-N\theta(x)) \circ T^{\theta}(t_{\gamma})^{-1} \\
&=& e^{-L^{\theta}(\gamma)} \circ (-N\theta(x)) \circ e^{L^{\theta}(\gamma)} \\
&=& \sum_{m=0}^{\infty} \frac{1}{m!} {\rm ad}(-L^{\theta}(\gamma))^m(-N\theta(x)).
\end{eqnarray*}
But $-L^{\theta}(\gamma)=(1/2)N((\log \theta(\widetilde{\gamma}))^2)
=\sum_{n=1}^{\infty} a_n N\theta((\widetilde{\gamma}-1)^n)$, where
$\widetilde{\gamma}\in \pi$ is a representative of $\gamma$ and
$\sum_{n=1}^{\infty} a_n (z-1)^n$ is the Taylor expansion of $(1/2)(\log z)^2$
at $z=1$. Since $\gamma$ and $\delta$ are disjoint,
the Goldman bracket $[\gamma^n,\delta]$ is $0$ for $n\ge 0$.
By Theorem \ref{GLAass}, we have $[-N\theta(\widetilde{\gamma}^n),-N\theta(x)]=0$
for $n\ge 0$. Thus we obtain $[-L^{\theta}(\gamma),-N\theta(x)]=0$
and $-N\theta(t_{\gamma}(x))=-N\theta(x)$. By Theorem \ref{GLAass}
we have $t_{\gamma}(x)-x=t_{\gamma}(\delta)-\delta\in \mathbb{Q}1$.
Since the action of
$\mathcal{M}_{g,1}$ on $\mathbb{Q}\hat{\pi}$ preserves the
augmentation $\mathbb{Q}\hat{\pi}\to \mathbb{Q}, \hat{\pi} \ni x \mapsto 1$,
we conclude $t_{\gamma}(\delta)=\delta$. The claim is proved.

Let $N$ be a closed regular neighborhood of $\gamma$.
By the claim, each oriented component of the boundary $\partial N$
is preserved by $t_{\gamma}$. By an isotopy, we may assume $t_{\gamma}$
is represented by a diffeomorphism $\varphi$ fixing $\partial N$ pointwise.
Also, if $\delta$ is an oriented loop
on $\Sigma \setminus {\rm Int}(N)$ then $t_{\gamma}(\delta)$
is ambient isotopic to $\delta$.  Since $\partial N$ is preserved by $t_{\gamma}$,
this ambient isotopy can be chosen to have its support in $\Sigma \setminus {\rm Int}(N)$.
By Lemma \ref{folklore}, applying a suitable isotopy to $\varphi$ we can write
$\varphi=\varphi^{\prime} \circ t_{\zeta}^m$, $m\in \mathbb{Z}$,
where $\varphi^{\prime}$ is a diffefomorphism whose support lies in $N$,
and $t_{\zeta}$ is the Dehn twist along a simple closed curve parallel to $\partial \Sigma$.
It should be remarked that
$\partial(\Sigma \setminus {\rm Int}(N))=\partial \Sigma \amalg \partial N$.
Let $\Sigma^{\prime}$ be the connected component of $\Sigma \setminus {\rm Int}(N)$ containing
$\partial \Sigma$. If $\Sigma^{\prime}$ is an annulus,
then $t_{\zeta}$ is isotopic to a diffeomorphism
whose support lies in $N$, so is $\varphi$.

Finally we show that if $\Sigma^{\prime}$ is not an annulus,
then $m$ should be $0$. We can choose a based loop $y$ on
$\Sigma^{\prime}$, freely homotopic to a simple closed curve,
so that $[y]\in H=H_1(\Sigma;\mathbb{Q})$ is not zero.
In fact, if the genus of $\Sigma^{\prime}$ is $\ge 1$,
this is easy. Even if the genus of $\Sigma^{\prime}$ is zero,
there is at least one boundary component $D\subset \partial \Sigma^{\prime}$
which is non-separating on $\Sigma$,
because $\Sigma^{\prime}$ is not an annulus and $N(\gamma)$ is connected.
We can choose $y$ as a simple based loop on $\Sigma^{\prime}$
representing $D$. Since $[y]$ is primitive, we can take
symplectic basis $A_1,B_1,\ldots,A_g,B_g \in H$ such
that $[y]=A_1$. Now we have $t_{\gamma}(y)=\varphi^{\prime}\circ
t_{\zeta}^m(y)=t_{\zeta}^m(y)$. Choosing a symplectic expansion $\theta$
and applying $\theta$ to this equation, we have
$$e^{-L^{\theta}(\gamma)}\theta(y)=T^{\theta}(t_{\gamma})\theta(y)
=\theta(t_{\gamma}(y))=\theta(t_{\zeta}^m(y))=T^{\theta}(t_{\zeta}^m)\theta(y)
=e^{-mL^{\theta}(\zeta)}\theta(y).$$
Taking the logarithm, we get
$L^{\theta}(\gamma)\theta(y)=mL^{\theta}(\zeta)\theta(y)$. By \cite{KK}
Corollary 6.5.3, we have $L^{\theta}(\gamma)\theta(y)=0$. On the other hand,
modulo $\widehat{T}_4$, we have
$$L^{\theta}(\zeta)\theta(y) \equiv \frac{1}{2}N(\omega \omega)(A_1)
=\omega A_1-A_1\omega\neq 0,$$
thus $m=0$. This completes the proof.
\end{proof}

\section{Loops with a single transverse double point}

In this section we consider a class of loops on a surface,
which we could say the simplest next to simple closed
curves. We will call such a loop a figure eight.
First we consider the case when the surface is a pair of pants.
After that we classify the possible configurations of a figure eight
on $\Sigma$.

\subsection{Figure eight on a surface}

\begin{dfn}
Let $S$ be an oriented surface and $\gamma$
an unoriented immersed loop on $S$. We say $\gamma$
is a figure eight on $S$ if its self-intersection consists of
a single transverse double point and $\gamma$ is not homotopic to a simple closed curve
or a square of a simple closed curve.
\end{dfn}

A simple closed curve or a square of a simple closed
curve can be deformed into an immersed loop
with a single transverse double point.
But in view of the remark after Definition \ref{defmap},
we exclude them from the above definition.

\subsection{Figure eight on a pair of pants}
Let $P$ be a pair of pants. Fixing a base point $*\in {\rm Int}(P)$,
let $\delta_1,\delta_2,\delta_3$ be simple based loops on $P$
such that each $\delta_i$ is freely homotopic to one of the
oriented boundary component of $P$, and $\delta_1\delta_2\delta_3=1\in
\pi_1(P,*)$. See Figure 4. We denote by $A_i$
the boundary component of $P$ which is freely homotopic to $\delta_i$.
Let $\gamma_1$, $\gamma_2$, and $\gamma_3$ be immersed loops on $P$
which are homotopic to $\delta_2\delta_3^{-1}$, $\delta_3\delta_1^{-1}$,
and $\delta_1\delta_2^{-1}$, respectively. The underlying unoriented
loop of $\gamma_i$ is a figure eight.

\begin{center}
Figure 4: a pair of pants

\unitlength 0.1in
\begin{picture}( 20.0000, 20.0000)( 12.0000,-24.0000)
%
\special{pn 20}%
\special{sh 1}%
\special{ar 2200 1400 10 10 0  6.28318530717959E+0000}%
%
\special{pn 13}%
\special{ar 2200 1400 1000 1000  0.0000000 6.2831853}%
%
\special{pn 13}%
\special{ar 1800 1400 200 200  0.0000000 6.2831853}%
%
\special{pn 13}%
\special{ar 2600 1400 200 200  0.0000000 6.2831853}%
%
\special{pn 8}%
\special{ar 2200 1400 900 900  5.5745590 6.2831853}%
\special{ar 2200 1400 900 900  0.0000000 3.8682350}%
%
\special{pn 8}%
\special{pa 1530 810}%
\special{pa 1552 786}%
\special{pa 1572 760}%
\special{pa 1594 736}%
\special{pa 1616 714}%
\special{pa 1640 692}%
\special{pa 1666 672}%
\special{pa 1692 656}%
\special{pa 1722 640}%
\special{pa 1752 628}%
\special{pa 1784 620}%
\special{pa 1816 612}%
\special{pa 1850 608}%
\special{pa 1882 608}%
\special{pa 1912 610}%
\special{pa 1940 616}%
\special{pa 1966 624}%
\special{pa 1990 634}%
\special{pa 2012 648}%
\special{pa 2034 664}%
\special{pa 2052 684}%
\special{pa 2070 704}%
\special{pa 2086 728}%
\special{pa 2100 752}%
\special{pa 2112 780}%
\special{pa 2124 808}%
\special{pa 2136 840}%
\special{pa 2144 872}%
\special{pa 2152 906}%
\special{pa 2160 940}%
\special{pa 2166 978}%
\special{pa 2172 1016}%
\special{pa 2178 1054}%
\special{pa 2182 1094}%
\special{pa 2186 1134}%
\special{pa 2188 1176}%
\special{pa 2192 1218}%
\special{pa 2194 1260}%
\special{pa 2196 1302}%
\special{pa 2198 1346}%
\special{pa 2200 1388}%
\special{pa 2200 1400}%
\special{sp}%
%
\special{pn 8}%
\special{pa 2870 810}%
\special{pa 2850 786}%
\special{pa 2828 760}%
\special{pa 2808 736}%
\special{pa 2784 714}%
\special{pa 2762 692}%
\special{pa 2736 672}%
\special{pa 2708 656}%
\special{pa 2680 640}%
\special{pa 2648 628}%
\special{pa 2616 620}%
\special{pa 2584 612}%
\special{pa 2552 608}%
\special{pa 2520 608}%
\special{pa 2490 610}%
\special{pa 2462 616}%
\special{pa 2436 624}%
\special{pa 2412 634}%
\special{pa 2388 648}%
\special{pa 2368 664}%
\special{pa 2348 684}%
\special{pa 2332 704}%
\special{pa 2316 728}%
\special{pa 2302 752}%
\special{pa 2288 780}%
\special{pa 2276 808}%
\special{pa 2266 840}%
\special{pa 2256 872}%
\special{pa 2248 906}%
\special{pa 2240 940}%
\special{pa 2234 978}%
\special{pa 2228 1016}%
\special{pa 2224 1054}%
\special{pa 2220 1094}%
\special{pa 2216 1134}%
\special{pa 2212 1176}%
\special{pa 2210 1218}%
\special{pa 2208 1260}%
\special{pa 2204 1302}%
\special{pa 2202 1346}%
\special{pa 2202 1388}%
\special{pa 2200 1400}%
\special{sp}%
%
\special{pn 8}%
\special{ar 1800 1400 350 350  1.5707963 4.7123890}%
%
\special{pn 8}%
\special{pa 1810 1050}%
\special{pa 1844 1050}%
\special{pa 1874 1050}%
\special{pa 1904 1054}%
\special{pa 1934 1062}%
\special{pa 1960 1074}%
\special{pa 1986 1090}%
\special{pa 2010 1108}%
\special{pa 2032 1130}%
\special{pa 2054 1154}%
\special{pa 2074 1180}%
\special{pa 2094 1206}%
\special{pa 2112 1236}%
\special{pa 2130 1268}%
\special{pa 2148 1300}%
\special{pa 2166 1332}%
\special{pa 2184 1366}%
\special{pa 2200 1400}%
\special{pa 2200 1400}%
\special{sp}%
%
\special{pn 8}%
\special{pa 2590 1050}%
\special{pa 2558 1050}%
\special{pa 2526 1050}%
\special{pa 2496 1054}%
\special{pa 2468 1062}%
\special{pa 2442 1074}%
\special{pa 2416 1090}%
\special{pa 2392 1108}%
\special{pa 2370 1130}%
\special{pa 2348 1154}%
\special{pa 2328 1180}%
\special{pa 2308 1206}%
\special{pa 2288 1236}%
\special{pa 2270 1268}%
\special{pa 2252 1300}%
\special{pa 2234 1332}%
\special{pa 2218 1366}%
\special{pa 2200 1400}%
\special{pa 2200 1400}%
\special{sp}%
%
\special{pn 8}%
\special{pa 2590 1750}%
\special{pa 2558 1752}%
\special{pa 2526 1750}%
\special{pa 2496 1746}%
\special{pa 2468 1738}%
\special{pa 2442 1726}%
\special{pa 2416 1712}%
\special{pa 2392 1692}%
\special{pa 2370 1672}%
\special{pa 2348 1648}%
\special{pa 2328 1622}%
\special{pa 2308 1594}%
\special{pa 2288 1564}%
\special{pa 2270 1534}%
\special{pa 2252 1502}%
\special{pa 2234 1468}%
\special{pa 2218 1434}%
\special{pa 2200 1400}%
\special{pa 2200 1400}%
\special{sp}%
%
\special{pn 8}%
\special{pa 1810 1750}%
\special{pa 1844 1752}%
\special{pa 1874 1750}%
\special{pa 1904 1746}%
\special{pa 1934 1738}%
\special{pa 1960 1726}%
\special{pa 1986 1712}%
\special{pa 2010 1692}%
\special{pa 2032 1672}%
\special{pa 2054 1648}%
\special{pa 2074 1622}%
\special{pa 2094 1594}%
\special{pa 2112 1564}%
\special{pa 2130 1534}%
\special{pa 2148 1502}%
\special{pa 2166 1468}%
\special{pa 2184 1434}%
\special{pa 2200 1400}%
\special{pa 2200 1400}%
\special{sp}%
%
\special{pn 8}%
\special{ar 2600 1400 350 350  4.7123890 6.2831853}%
\special{ar 2600 1400 350 350  0.0000000 1.5707963}%
%
\special{pn 8}%
\special{pa 1800 1050}%
\special{pa 1900 980}%
\special{fp}%
%
\special{pn 8}%
\special{pa 1800 1050}%
\special{pa 1900 1120}%
\special{fp}%
%
\special{pn 8}%
\special{pa 2600 1750}%
\special{pa 2500 1680}%
\special{fp}%
%
\special{pn 8}%
\special{pa 2600 1750}%
\special{pa 2500 1820}%
\special{fp}%
%
\special{pn 8}%
\special{pa 2530 610}%
\special{pa 2430 550}%
\special{fp}%
\special{pa 2470 570}%
\special{pa 2470 570}%
\special{fp}%
%
\special{pn 8}%
\special{pa 2530 610}%
\special{pa 2430 670}%
\special{fp}%
\special{pa 2470 650}%
\special{pa 2470 650}%
\special{fp}%
\put(17.1000,-9.6000){\makebox(0,0)[lb]{$\delta_1$}}%
\put(25.1000,-19.6000){\makebox(0,0)[lb]{$\delta_2$}}%
\put(24.3000,-8.5000){\makebox(0,0)[lb]{$\delta_3$}}%
\put(21.6000,-16.2000){\makebox(0,0)[lb]{$*$}}%
\end{picture}%

\end{center}

\begin{lem}
\label{Lem8inP}
Let $\gamma$ be a figure eight on $P$, and $N(\gamma)\subset P$
a closed regular neighborhood of $\gamma$. Then
the complement $P\setminus {\rm Int}(N(\gamma))$ is a disjoint union
of three annuli. Each annulus contains
one boundary component of $P$ and one boundary
component of $N(\gamma)$. In particular, the inclusion
$N(\gamma) \subset P$ is a strong deformation retract.
\end{lem}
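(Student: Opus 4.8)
The plan is to first identify the homeomorphism type of $N(\gamma)$, then analyse its complement in $P$ by an Euler characteristic count; the one place where real work is needed — and the step I expect to be the main obstacle — is to rule out disc components of the complement, which is exactly where the hypothesis that $\gamma$ is a figure eight (and not a simple closed curve or a square of one) must enter.

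First I would observe that a closed regular neighbourhood $N(\gamma)$ deformation retracts onto $\gamma$ regarded as the embedded ``figure eight graph'' whose single vertex is the double point and whose two edges are the sub-loops based there; hence $N(\gamma)$ is a compact connected oriented surface with $\chi(N(\gamma)) = 1 - 2 = -1$, and so is either a pair of pants or a one-holed torus. A one-holed torus would contain two simple closed curves meeting transversally in exactly one point, and these, regarded as curves in $P$, would then have algebraic intersection number $\pm 1$; but the intersection form on $H_1(P;\mathbb{Q})$ is identically zero, since $P$ is planar and $H_1(P;\mathbb{Q})$ is spanned by mutually disjoint boundary curves. Hence $N(\gamma)$ is a pair of pants; write $\partial N(\gamma) = c_1 \sqcup c_2 \sqcup c_3$, three disjoint simple closed curves lying in ${\rm Int}(P)$.

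Next I would show each $c_i$ is essential in $P$. Suppose some $c_i$ bounds a disc in $P$. Capping off the boundary circle $c_i$ of the pair of pants $N(\gamma)$ with a disc produces a compact connected oriented surface $\widehat{N}$ with $\chi(\widehat{N}) = -1 + 1 = 0$ whose two boundary circles are the remaining $c_j, c_l$; thus $\widehat{N}$ is an annulus with core isotopic to $c_j$. Since $c_i$ is null-homotopic in $P$, the inclusion $N(\gamma) \hookrightarrow P$ extends to a map $\widehat{N} \to P$, so the free homotopy class of $\gamma$ in $P$ is the image of its class in $\pi_1(\widehat{N}) \cong \mathbb{Z}$. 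The double point splits $\gamma$ into two sub-loops $a, b$ with $\gamma = ab$; each of $a, b$ is an embedded circle, hence in the annulus $\widehat{N}$ is null-homotopic or isotopic to the core, so $[a], [b] \in \{0, \pm 1\}$ and $[\gamma] = [a] + [b]$ has $|[\gamma]| \le 2$. Therefore $\gamma$ is freely homotopic in $P$ to $c_j^{\,[\gamma]}$ with $|[\gamma]| \le 2$; as $c_j$ is a simple closed curve in $P$, this makes $\gamma$ homotopic to a simple closed curve, to the square of one, or to a trivial loop — none of which is a figure eight, a contradiction. Hence $c_1, c_2, c_3$ are all essential in $P$.

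Finally, set $X := P \setminus {\rm Int}(N(\gamma))$, a compact oriented surface with $\chi(X) = \chi(P) - \chi(N(\gamma)) = 0$ and $\partial X = c_1 \sqcup c_2 \sqcup c_3 \sqcup \partial P$, six circles in all. No component of $X$ is a disc — its boundary circle, lying in $\partial N(\gamma) \sqcup \partial P$, would then be essential in $P$ yet bound a disc — and none is closed, since $P$ is connected with non-empty boundary. A compact connected oriented surface with boundary that is not a disc has non-positive Euler characteristic, with equality exactly for the annulus; as $\chi(X) = 0$, every component of $X$ is an annulus, and counting boundary circles there are three of them. Since the three boundary components of the pair of pants $P$ are pairwise non-isotopic, no annulus can have both its boundary circles in $\partial P$, so each of the three annuli contains exactly one component of $\partial P$ and hence exactly one of $\partial N(\gamma)$; this proves the first three assertions. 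The last is then immediate: deformation retracting each of the three annuli onto the boundary circle it shares with $N(\gamma)$ exhibits $N(\gamma)$ as a strong deformation retract of $P$.
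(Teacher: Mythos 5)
Your proof is correct. It shares the paper's skeleton --- $N(\gamma)$ is a pair of pants, the complement has Euler characteristic $0$ and six boundary circles, so one only has to exclude non-annulus components --- but the way you exclude them is genuinely different. The paper enumerates the three partitions of the complement compatible with $\chi=0$ and genus $0$, namely $\Sigma_{0,4}\amalg\Sigma_{0,1}\amalg\Sigma_{0,1}$, $\Sigma_{0,3}\amalg\Sigma_{0,2}\amalg\Sigma_{0,1}$ and $\Sigma_{0,2}\amalg\Sigma_{0,2}\amalg\Sigma_{0,2}$, and kills the first two by separate arguments: the $A_i$ are not nullhomologous; a disc bounded by $C_1$ or $C_2$ would make $\gamma$ simple; an annulus joining $C_1$ to $C_2$ would create genus in $P$; an annulus joining $A_1$ to $C_1$ would make $\gamma$ a power of $A_1$. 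You replace all of this with the single uniform lemma that no boundary circle of $N(\gamma)$ bounds a disc, proved by capping off and bounding $|[\gamma]|\le 2$ in the resulting annulus; this puts $C_3$ on the same footing as $C_1$ and $C_2$ and rules out both degenerate partitions at once, at the cost of invoking the exclusion of squares of simple closed curves already at this stage (the paper needs it only in its $A_1\amalg C_1$ sub-case), which is indeed where you predicted the real work would lie. You also supply a justification, via the vanishing of the intersection pairing on the planar surface $P$, for the assertion that $N(\gamma)$ is a pair of pants rather than a one-holed torus, which the paper states without proof. The endgame --- distributing the six boundary circles among the three annuli using the pairwise non-isotopy of the components of $\partial P$ --- is essentially the same in both arguments.
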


\begin{proof}
The regular neighborhood $N(\gamma)$ is diffeomorphic
to a pair of pants. Cutting at the double point,
we can divide the loop $\gamma$  into two simple closed curves. Let
$C_1$ and $C_2$ be the boundary components of
$N(\gamma)$ which are homotopic to these simple closed curves,
and $C_3$ the remaining boundary component of $N(\gamma)$.
The complement $P\setminus {\rm Int}(N(\gamma))$ has
six boundary components, $\{ A_i,C_i \}$.
The Euler characteristic of $P\setminus {\rm Int}(N(\gamma))$
is equal to $\chi(P)-\chi(N(\gamma))=0$, and
each connected component has genus $0$.

We denote by $\Sigma_{0,r}$ the compact connected
oriented surface of genus $0$ with $r$ boundary components.
Computing the Euler characteristic, we see that
the possible topological types of $P\setminus {\rm Int}(N(\gamma))$
are $\Sigma_{0,4} \amalg \Sigma_{0,1} \amalg \Sigma_{0,1}$,
$\Sigma_{0,3} \amalg \Sigma_{0,2} \amalg \Sigma_{0,1}$,
or $\Sigma_{0,2} \amalg \Sigma_{0,2} \amalg \Sigma_{0,2}$.
For, if $P\setminus {\rm Int}(N(\gamma))\cong \Sigma_{0,r_1} \amalg
\cdots \amalg \Sigma_{0,r_k}$, we must have
$r_i \le 6$ and $\sum_{i} (2-r_i)=0$. Suppose
$P\setminus {\rm Int}(N(\gamma))\cong
\Sigma_{0,4} \amalg \Sigma_{0,1} \amalg \Sigma_{0,1}$.
Since $A_i$ are not homologous to $0$ as a homology
class of $P$, $A_i$ does not bound a disk. Also,
if $C_1$ or $C_2$ bounds a disk, then $\gamma$ is homotopic
to a simple closed curve. Therefore neither $C_1$ nor $C_2$
bounds a disk. But there are two disk components
in $P\setminus {\rm Int}(N(\gamma))$, a contradiction.
Suppose $P\setminus {\rm Int}(N(\gamma))\cong
\Sigma_{0,3} \amalg \Sigma_{0,2} \amalg \Sigma_{0,1}$.
From what we have just seen, the boundary of $\Sigma_{0,1}$
must be $C_3$. Since $A_i$ are not homologous to each other,
any two of them do not appear in the boundary of
$\Sigma_{0,2}$. Therefore, by a suitable renumbering,
we may assume the boundary of $\Sigma_{0,2}$
is $C_1 \amalg C_2$ or $A_1 \amalg C_1$. If
$\partial \Sigma_{0,2} \cong C_1 \amalg C_2$, then we can
construct a simple closed curve in $\Sigma_{0,2} \cup N(\gamma)
\subset P$ meeting $C_1$ in
a transverse double point. This contradicts to
the fact that the genus of $P$ is $0$.
If $\partial \Sigma_{0,2} \cong A_1 \amalg C_1$,
$\gamma$ is contained in the annulus
$(N(\gamma) \cup \Sigma_{0,1}) \cup (\Sigma_{0,2})$,
which contains $A_1$ as a strong deformation retract.
This implies that $\gamma$ is homotopic to a power of
the simple closed curve $A_1$, a contradiction.
Therefore, we have $P\setminus {\rm Int}(N(\gamma))
\cong \Sigma_{0,2} \amalg \Sigma_{0,2} \amalg \Sigma_{0,2}$.

The remaining part of the lemma follows from the
fact $A_i$ are not homologous to each other.
\end{proof}

\begin{prop}
\label{8inP}
Any figure eight on $P$ is isotopic relative to the
boundary $\partial P$ to one of
$\gamma_1$, $\gamma_2$, or $\gamma_3$. 
\end{prop}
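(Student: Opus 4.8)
The plan is to determine the free homotopy class of $\gamma$ first, and then to upgrade this to an isotopy relative to $\partial P$.

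For the first step I would apply Lemma \ref{Lem8inP}. Cutting $\gamma$ at its double point produces two embedded loops $\alpha,\beta$ based at that point, and, orienting them along $\gamma$, we have $\gamma\simeq\alpha\beta$. Write $a,b$ for the classes of $\alpha,\beta$ in $\pi_1(N(\gamma))$; then $\{a,b\}$ is a basis of $\pi_1(N(\gamma))$, hence, since $N(\gamma)\subset P$ is a strong deformation retract, a basis of $\pi_1(P)$ as well, and $\gamma$ is conjugate to $ab$. Two of the boundary curves of the pair of pants $N(\gamma)$, say $C_1$ and $C_2$, are parallel to $\alpha$ and $\beta$, so they are conjugate to $a^{\pm1}$ and $b^{\pm1}$; since $C_3$ is, after a suitable orientation, conjugate to $(C_1C_2)^{-1}$, it is conjugate to $ab$ or to $ab^{-1}$. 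The case $C_3\sim ab$ is impossible, because then the simple closed curve $C_3$ would be freely homotopic to $\gamma$, contradicting that $\gamma$ is a figure eight. Hence $C_1,C_2,C_3$ are freely homotopic, in some order, to $a$, $b$, $ab^{-1}$. By Lemma \ref{Lem8inP} each complementary annulus of $N(\gamma)$ contains exactly one component of $\partial P$ and one of $\partial N(\gamma)$, so after renumbering the $A_i$ we may assume $C_i$ is freely homotopic in $P$ to $A_i$, hence to $\delta_i$; substituting into $\delta_1\delta_2\delta_3=1$ and keeping track of orientations, a finite computation in the rank-two free group $\pi_1(P)$ shows that, after a suitable renumbering of $\delta_1,\delta_2,\delta_3$ and correspondingly of $\gamma_1,\gamma_2,\gamma_3$, the loop $\gamma$ is conjugate to $\delta_1\delta_2^{-1}$. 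Thus $\gamma$ is freely homotopic to one of $\gamma_1,\gamma_2,\gamma_3$; denote it $\gamma_k$.

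For the second step, observe that the conjugacy class of $\delta_1\delta_2^{-1}$ is primitive and is conjugate neither to $1$ nor to any $\delta_i^{\pm1}$, so it is not represented by a simple closed curve; therefore its self-intersection number is at least $1$, and, being realized by $\gamma_k$ with a single transverse double point, it equals $1$. Hence $\gamma$ and $\gamma_k$ are both representatives of their common free homotopy class having the minimal number of double points, and by the uniqueness up to ambient isotopy of such representatives (Hass--Scott) there is an ambient isotopy of $P$ carrying $\gamma$ to $\gamma_k$; as both loops lie in the interior of $P$, this isotopy can be chosen to fix $\partial P$ pointwise, which is what we want.

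I expect the main obstacle to lie in this last step, together with the orientation bookkeeping in the computation of the homotopy class. To sidestep the Hass--Scott theorem one can argue instead as follows: by Lemma \ref{Lem8inP} the closure of $P\setminus N(\gamma)$ is a union of three annuli joining the components of $\partial N(\gamma)$ to those of $\partial P$, so pushing $N(\gamma)$ across these annuli isotopes $P$ rel $\partial P$ until $N(\gamma)$ is a fixed ``core'' sub-pair-of-pants with boundary parallel to $\partial P$; doing the same for $N(\gamma_k)$, one is reduced to figure eights filling a pair of pants, and the change-of-coordinates principle then furnishes a diffeomorphism of $P$, fixing $\partial P$ pointwise, that carries $\gamma$ to $\gamma_k$. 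Since the mapping class group of a pair of pants relative to its boundary is generated by the three boundary Dehn twists (cf.\ \cite{FM} and the proof of Lemma \ref{folklore}), such a diffeomorphism is isotopic rel $\partial P$ to a product of boundary Dehn twists; choosing representatives of these twists supported away from the core, they fix $\gamma_k$, and hence $\gamma$ is isotopic to $\gamma_k$ rel $\partial P$.
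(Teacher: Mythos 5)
Your strategy is genuinely different from the paper's. The paper works with based loops: it isotopes the double point onto the basepoint, splits $\gamma$ into two based lobes $f_1,f_2$, shows these form a basis of $\pi_1(P)$ freely homotopic to two of the $\delta_i$, invokes \cite{MKS} \S3.5, Corollary N4 (an automorphism of a rank-two free group acting trivially on homology is inner) to conjugate the pair $(f_1,f_2)$ \emph{simultaneously} to $(\delta_1,\delta_2)$ by a single element $x$, and then realizes $x$ by a point-pushing ambient isotopy rel $\partial P$. You instead pin down only the free homotopy class of $\gamma$ from the boundary data of $N(\gamma)$ and then promote free homotopy to isotopy via Hass--Scott uniqueness of minimal-position representatives. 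Your route avoids all basepoint bookkeeping, at the price of a much heavier topological input in the second step; the paper's route is elementary throughout.

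Two places need repair. First, the phrase ``a finite computation in the rank-two free group'' conceals the real content of step 1: knowing that $a$ and $b$ are \emph{separately} conjugate to $\delta_1^{\pm1}$ and $\delta_2^{\pm1}$ does not determine the conjugacy class of $ab$, because the two conjugators may differ, and no bookkeeping in the abelianization can fix this. What you need is precisely the simultaneous-conjugacy statement for the basis $(a,b)$, i.e., the same Corollary N4 the paper uses (the abelianization only serves to pin the signs to $a\sim\delta_i$, $b\sim\delta_j^{-1}$). Once you state and use that, step 1 is correct. Second, your proposed fallback for step 2 is circular: there is no change-of-coordinates principle for figure eights in \cite{FM}, and the assertion that any two figure eights filling a pair of pants with lobes parallel to the same two boundary components are related by a diffeomorphism rel $\partial P$ is essentially Proposition \ref{8inP} itself. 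The main line does work: $\delta_1\delta_2^{-1}$ is primitive and not boundary-parallel, hence not simple in $P$, so both $\gamma$ and $\gamma_k$ realize the minimal self-intersection number $1$; uniqueness of minimal representatives up to ambient isotopy applies (with one double point there is no room for a Reidemeister III move), and your boundary-twist argument correctly converts the ambient isotopy into one fixing $\partial P$ pointwise. But you should commit to the Hass--Scott citation (and state the precise theorem you are using) rather than offer the circular alternative.
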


\begin{proof}
Let $\gamma$ be a figure eight on $P$ and $p$ the
unique self intersection of $\gamma$. By an isotopy,
we may assume $p=*$. Let $\gamma
\colon [0,1]\to P$ be a parametrization of $\gamma$ satisfying
$\gamma(0)=\gamma(1/2)=\gamma(1)=*$.
Let $f_1:=\gamma|_{[0,1/2]}$ and $f_2:=\gamma|_{[1/2,1]}$, and
consider the map $f:=f_1 \vee f_2\colon S^1 \vee S^1 \to P$.
Since $\gamma$ is not homotopic to a simple closed curve,
the image of $f_i$ are essential simple closed curves on $P$,
thus parallel to one of the boundaries of $P$.
Also, by Lemma \ref{Lem8inP}, the induced map
$f_* \colon \pi_1(S^1 \vee S^1) \to \pi_1(P)$ is
an isomorphism.
By a suitable renumbering and taking the inverse
of the parametrization of $f_1$ or $f_2$,
we may assume $(f_1,f_2)$ are free homotopic to
one of $(\delta_1, \delta_2)$, $(\delta_2,\delta_3)$,
or $(\delta_3,\delta_1)$, respectively.

Suppose $f_1$ and $f_2$ are free homotopic to $\delta_1$ and
$\delta_2$, respectively. Note that $\pi_1(P,*)$ is a free group
of rank two generated by $\delta_1$ and $\delta_2$.
If we regard $f_1$ and $f_2$ as
elements of $\pi_1(P,*)$, the endomorphism defined by
$\delta_1 \mapsto f_1$ and $\delta_2 \mapsto f_2$ is
an isomorphism which act trivially on the abelianization
of $\pi_1(P,*)$. But it is classically known that such
an isomorphism is in fact an inner automorphism,
see \cite{MKS} \S3.5, Corollary N4. Thus there exists
$x\in \pi_1(P,*)$ such that $f_1=x^{-1}\delta_1 x$,
$f_2=x^{-1}\delta_2 x$.

Representing $x$ as a loop based at $*$, we regard
$x$ as an isotopy of $*$. Let $\{\Psi_t \}_{t\in [0,1]}$
be an ambient isotopy of $P$ relative to $\partial P$ extending $x$.
Then $\Psi_1(f_1)=\delta_1$ and $\Psi_1(f_2)=\delta_2$.
This implies $\Psi_1(\gamma)$ is isotopic to $\gamma_3$,
so is $\gamma=\Psi_0(\gamma)$.

By the same way, if $(f_1,f_2)$ are free homotopic to
$(\delta_2,\delta_3)$ or $(\delta_3,\delta_1)$, we
conclude $\gamma$ is isotopic to $\gamma_1$ or
$\gamma_2$, respectively. This completes the proof.
\end{proof}

\subsection{Figure eight on the surface $\Sigma$}

\begin{lem}
\label{PinSig}
Let $Q$ be a closed subset of $\Sigma$ which
lies in ${\rm Int}(\Sigma)$ and is diffeomormorphic to a pair of pants.
Then the pair $(\Sigma, Q)$ is diffeomorphic
to one of the pairs in Figure 5.
\end{lem}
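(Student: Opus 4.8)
The plan is to reduce the classification to the topology of the complement $R:=\Sigma\setminus{\rm Int}(Q)$. Write $\partial Q=C_1\amalg C_2\amalg C_3$, three disjoint simple closed curves in ${\rm Int}(\Sigma)$; then $R$ is a compact oriented surface with $\partial R=\partial\Sigma\amalg C_1\amalg C_2\amalg C_3$, four circles in all. Since $\Sigma=Q\cup_{\partial Q}R$ is a union along circles,
\[
\chi(R)=\chi(\Sigma)-\chi(Q)=(1-2g)-(-1)=2-2g.
\]
Up to diffeomorphism the pair $(\Sigma,Q)$ is recovered from $Q$, the diffeomorphism type of $R$, and the gluing data along $\partial Q$. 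As orientation-preserving self-diffeomorphisms of a circle are all isotopic, and a pair of pants admits self-diffeomorphisms realizing any permutation of its three cuffs, the only surviving invariants are: how $R$ splits into connected components, which component carries $\partial\Sigma$, which of $C_1,C_2,C_3$ lies on which component, and the genus of each component. So it suffices to enumerate these data.

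This enumeration is short because $\Sigma$ is connected. If a component $R'$ of $R$ contained none of $C_1,C_2,C_3$, then $\partial R'\subseteq\partial\Sigma$, so $R'$ is either closed (forcing $R'=\Sigma$, impossible since $Q\subsetneq\Sigma$) or disjoint from $Q$ with $\partial R'=\partial\Sigma$ (so $R'$ and its complement disconnect $\Sigma$, again impossible). Hence every component of $R$ --- including the component $R_0$ carrying $\partial\Sigma$ --- contains at least one cuff, and therefore $R$ has at most three components. Running through the cases of one, two, or three components, with the three cuffs distributed so that each component gets at least one (and $R_0$ also carries $\partial\Sigma$), the classification of compact oriented surfaces determines each component once its genus is fixed, while $\chi(R)=2-2g$ constrains the tuple of genera: for instance, for one component $R$ is the genus $g-2$ surface with four boundary circles (so $g\ge2$); for two components one gets a genus $h_0$ and a genus $h_1$ piece with $h_0+h_1=g-1$ and cuffs split $2+1$ or (after moving $\partial\Sigma$) $1+2$; and so on. Degenerate pieces --- a disk, i.e.\ some $C_i$ bounds a disk, or an annulus, i.e.\ some $C_i$ is boundary-parallel or two cuffs are isotopic --- occur as limiting cases and are listed on the same footing. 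Gluing $Q$ back onto each admissible $R$ along $\partial Q$ yields exactly the pairs of Figure 5, and conversely every picture there arises this way.

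Finally, two such reconstructions give diffeomorphic pairs precisely when the combinatorial data and the multiset of component genera coincide: a diffeomorphism of $(\Sigma,Q)$ restricts to diffeomorphisms of $Q$ and of $R$ agreeing along $\partial Q$, hence preserves the component structure of $R$, the cuff/component incidence, the position of $\partial\Sigma$, and each component's genus and number of boundary circles (up to permuting identical components); conversely compatible diffeomorphisms of $Q$ and of $R$ glue up. I expect the only real work to be the bookkeeping: writing the case list without duplication, recognising when two distributions of $C_1,C_2,C_3$ differ by a self-diffeomorphism of $Q$ and so must be identified, and treating the disk and annulus cases consistently with Figure 5. Everything else is the Euler-characteristic count together with the classification of surfaces.
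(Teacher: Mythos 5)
Your proof is correct and follows essentially the same route as the paper: pass to the complement $\Sigma\setminus{\rm Int}(Q)$, a compact oriented surface with four boundary circles and $\chi=2-2g$, bound the number of its connected components, and determine the topological type of each component from the Euler characteristic together with the classification of surfaces. You additionally justify two points the paper leaves implicit --- that every component of the complement must meet $\partial Q$ (hence there are at most three components) and that the gluing data along the cuffs is irrelevant up to diffeomorphism of the pair --- but the substance of the argument is the same.
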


\begin{center}
Figure 5: the pair $(\Sigma,Q)$

\input{figure5-1.tex}

\vspace{0.6cm}

\unitlength 0.1in
\begin{picture}( 33.0000, 15.6000)( 14.0000,-17.9000)
%
\special{pn 13}%
\special{ar 4600 1200 100 400  0.0000000 6.2831853}%
%
\special{pn 13}%
\special{ar 1800 1200 400 400  1.5707963 4.7123890}%
%
\special{pn 13}%
\special{ar 1600 1200 100 100  0.0000000 6.2831853}%
%
\special{pn 13}%
\special{ar 2000 1200 100 100  0.0000000 6.2831853}%
\put(17.0000,-12.5000){\makebox(0,0)[lb]{$\cdots$}}%
%
\special{pn 13}%
\special{ar 3950 1200 100 100  0.0000000 6.2831853}%
%
\special{pn 13}%
\special{ar 4350 1200 100 100  0.0000000 6.2831853}%
\put(40.5000,-12.5000){\makebox(0,0)[lb]{$\cdots$}}%
%
\special{pn 13}%
\special{ar 2760 970 100 100  0.0000000 6.2831853}%
%
\special{pn 13}%
\special{ar 3160 970 100 100  0.0000000 6.2831853}%
\put(28.6000,-10.2000){\makebox(0,0)[lb]{$\cdots$}}%
%
\special{pn 8}%
\special{ar 2200 1200 50 400  1.5707963 4.7123890}%
%
\special{pn 8}%
\special{ar 2200 1200 50 400  4.7123890 4.9790556}%
\special{ar 2200 1200 50 400  5.1390556 5.4057223}%
\special{ar 2200 1200 50 400  5.5657223 5.8323890}%
\special{ar 2200 1200 50 400  5.9923890 6.2590556}%
\special{ar 2200 1200 50 400  6.4190556 6.6857223}%
\special{ar 2200 1200 50 400  6.8457223 7.1123890}%
\special{ar 2200 1200 50 400  7.2723890 7.5390556}%
\special{ar 2200 1200 50 400  7.6990556 7.8539816}%
%
\special{pn 8}%
\special{ar 3750 1200 50 400  1.5707963 4.7123890}%
%
\special{pn 8}%
\special{ar 3750 1200 50 400  4.7123890 4.9790556}%
\special{ar 3750 1200 50 400  5.1390556 5.4057223}%
\special{ar 3750 1200 50 400  5.5657223 5.8323890}%
\special{ar 3750 1200 50 400  5.9923890 6.2590556}%
\special{ar 3750 1200 50 400  6.4190556 6.6857223}%
\special{ar 3750 1200 50 400  6.8457223 7.1123890}%
\special{ar 3750 1200 50 400  7.2723890 7.5390556}%
\special{ar 3750 1200 50 400  7.6990556 7.8539816}%
%
\special{pn 13}%
\special{pa 1800 800}%
\special{pa 4600 800}%
\special{fp}%
%
\special{pn 13}%
\special{pa 1800 1600}%
\special{pa 4600 1600}%
\special{fp}%
%
\special{pn 8}%
\special{ar 2950 1100 350 50  6.2831853 6.2831853}%
\special{ar 2950 1100 350 50  0.0000000 3.1415927}%
%
\special{pn 8}%
\special{ar 3350 1100 50 300  3.1415927 4.7123890}%
%
\special{pn 8}%
\special{ar 2550 1100 50 300  4.7123890 6.2831853}%
%
\special{pn 8}%
\special{ar 2550 1100 50 300  3.1415927 3.4844498}%
\special{ar 2550 1100 50 300  3.6901641 4.0330212}%
\special{ar 2550 1100 50 300  4.2387355 4.5815927}%
%
\special{pn 8}%
\special{ar 3350 1100 50 300  4.7123890 5.0552461}%
\special{ar 3350 1100 50 300  5.2609604 5.6038176}%
\special{ar 3350 1100 50 300  5.8095318 6.1523890}%
%
\special{pn 8}%
\special{ar 2950 1100 450 150  6.2831853 6.4831853}%
\special{ar 2950 1100 450 150  6.6031853 6.8031853}%
\special{ar 2950 1100 450 150  6.9231853 7.1231853}%
\special{ar 2950 1100 450 150  7.2431853 7.4431853}%
\special{ar 2950 1100 450 150  7.5631853 7.7631853}%
\special{ar 2950 1100 450 150  7.8831853 8.0831853}%
\special{ar 2950 1100 450 150  8.2031853 8.4031853}%
\special{ar 2950 1100 450 150  8.5231853 8.7231853}%
\special{ar 2950 1100 450 150  8.8431853 9.0431853}%
\special{ar 2950 1100 450 150  9.1631853 9.3631853}%
\put(16.0000,-4.0000){\makebox(0,0)[lb]{case 4 ($k_1,k_2,h \ge 0, k_1+k_2+h=g$)}}%
\put(28.0000,-18.3000){\makebox(0,0)[lb]{$Q$}}%
\put(14.8000,-18.8000){\makebox(0,0)[lb]{$\underbrace{\hspace{1.5cm}}_{\ }$}}%
\put(17.0000,-19.6000){\makebox(0,0)[lb]{$k_1$}}%
\put(38.8000,-18.8000){\makebox(0,0)[lb]{$\underbrace{\hspace{1.5cm}}_{\ }$}}%
\put(41.0000,-19.6000){\makebox(0,0)[lb]{$h$}}%
\put(26.2000,-7.7000){\makebox(0,0)[lb]{$\overbrace{\hspace{1.5cm}}^{\ }$}}%
\put(28.8000,-6.1000){\makebox(0,0)[lb]{$k_2$}}%
\end{picture}%

\end{center}

\begin{proof}
The assertion is obtained by the classification theorem of surfaces.
Consider the complement $\Sigma^{\prime}:=\Sigma \setminus {\rm Int}(Q)$.
This is a compact oriented surface with four boundary components.
The number of connected components is $1$, $2$, or $3$.
Let $\Sigma_{h,r}$ be a compact connected oriented surface
of genus $h$ with $r$ boundary components.
By computing the Euler characteristic of each component,
we can determine the topological types of $\Sigma^{\prime}$.
If $|\pi_0(\Sigma^{\prime})|=1$, then $\Sigma^{\prime}
\cong \Sigma_{g,4}$. This is the case 1. If $|\pi_0(\Sigma^{\prime})|=2$,
$\Sigma^{\prime} \cong \Sigma_{h-1,2} \amalg \Sigma_{g-h,2}$
or $\Sigma^{\prime} \cong \Sigma_{h-1} \amalg \Sigma_{g-h,3}$
where $1\le h\le g$ and $\Sigma_{g-h,2}$ or $\Sigma_{g-h,3}$
is the connected component containing $\partial \Sigma$.
This is the case 2 or 3, respectively.
If $|\pi_0(\Sigma^{\prime})|=3$, then
$\Sigma^{\prime} \cong \Sigma_{k_1,1} \amalg \Sigma_{k_2,1}
\amalg \Sigma_{h,2}$, where $k_1,k_2,h \ge 0$,
$k_1+k_2+h=g$, and $\Sigma_{h,2}$ is the connected component
containing $\partial \Sigma$. This is the case 4.
\end{proof}

\begin{center}
Figure 6: the pair $(\Sigma,\gamma)$

\unitlength 0.1in
\begin{picture}( 25.0000, 11.7000)( 16.0000,-14.0000)
%
\special{pn 13}%
\special{ar 4000 1000 100 400  0.0000000 6.2831853}%
%
\special{pn 13}%
\special{pa 4000 600}%
\special{pa 2000 600}%
\special{fp}%
%
\special{pn 13}%
\special{ar 2000 1000 400 400  1.5707963 4.7123890}%
%
\special{pn 13}%
\special{pa 4000 1400}%
\special{pa 2000 1400}%
\special{fp}%
%
\special{pn 13}%
\special{ar 2000 1000 100 100  0.0000000 6.2831853}%
%
\special{pn 13}%
\special{ar 2530 1000 100 100  0.0000000 6.2831853}%
%
\special{pn 13}%
\special{ar 3000 1000 100 100  0.0000000 6.2831853}%
%
\special{pn 13}%
\special{ar 3700 1000 100 100  0.0000000 6.2831853}%
\put(31.0000,-10.5000){\makebox(0,0)[lb]{$\cdots$}}%
\put(18.0000,-4.0000){\makebox(0,0)[lb]{case I}}%
%
\special{pn 8}%
\special{ar 2000 1000 170 170  0.0000000 6.2831853}%
%
\special{pn 8}%
\special{ar 2530 1000 170 170  0.0000000 6.2831853}%
%
\special{pn 8}%
\special{ar 2530 1000 320 320  4.7123890 4.8998890}%
\special{ar 2530 1000 320 320  5.0123890 5.1998890}%
\special{ar 2530 1000 320 320  5.3123890 5.4998890}%
\special{ar 2530 1000 320 320  5.6123890 5.7998890}%
\special{ar 2530 1000 320 320  5.9123890 6.0998890}%
\special{ar 2530 1000 320 320  6.2123890 6.3998890}%
\special{ar 2530 1000 320 320  6.5123890 6.6998890}%
\special{ar 2530 1000 320 320  6.8123890 6.9998890}%
\special{ar 2530 1000 320 320  7.1123890 7.2998890}%
\special{ar 2530 1000 320 320  7.4123890 7.5998890}%
\special{ar 2530 1000 320 320  7.7123890 7.8539816}%
%
\special{pn 8}%
\special{ar 2000 1000 320 320  1.5707963 1.7582963}%
\special{ar 2000 1000 320 320  1.8707963 2.0582963}%
\special{ar 2000 1000 320 320  2.1707963 2.3582963}%
\special{ar 2000 1000 320 320  2.4707963 2.6582963}%
\special{ar 2000 1000 320 320  2.7707963 2.9582963}%
\special{ar 2000 1000 320 320  3.0707963 3.2582963}%
\special{ar 2000 1000 320 320  3.3707963 3.5582963}%
\special{ar 2000 1000 320 320  3.6707963 3.8582963}%
\special{ar 2000 1000 320 320  3.9707963 4.1582963}%
\special{ar 2000 1000 320 320  4.2707963 4.4582963}%
\special{ar 2000 1000 320 320  4.5707963 4.7123890}%
%
\special{pn 8}%
\special{pa 2000 680}%
\special{pa 2530 680}%
\special{da 0.070}%
%
\special{pn 8}%
\special{pa 2000 1320}%
\special{pa 2530 1320}%
\special{da 0.070}%
%
\special{pn 8}%
\special{ar 2000 1000 250 250  1.5707963 4.7123890}%
%
\special{pn 8}%
\special{ar 2520 1000 250 250  4.7123890 6.2831853}%
\special{ar 2520 1000 250 250  0.0000000 1.5707963}%
%
\special{pn 8}%
\special{pa 2000 1250}%
\special{pa 2032 1252}%
\special{pa 2064 1252}%
\special{pa 2096 1246}%
\special{pa 2128 1234}%
\special{pa 2158 1216}%
\special{pa 2184 1196}%
\special{pa 2204 1170}%
\special{pa 2218 1144}%
\special{pa 2226 1112}%
\special{pa 2232 1082}%
\special{pa 2236 1048}%
\special{pa 2240 1016}%
\special{pa 2246 984}%
\special{pa 2258 954}%
\special{pa 2272 924}%
\special{pa 2290 898}%
\special{pa 2310 872}%
\special{pa 2332 850}%
\special{pa 2356 828}%
\special{pa 2384 810}%
\special{pa 2412 796}%
\special{pa 2440 782}%
\special{pa 2470 768}%
\special{pa 2500 758}%
\special{pa 2520 750}%
\special{sp}%
%
\special{pn 8}%
\special{pa 2520 1250}%
\special{pa 2488 1252}%
\special{pa 2456 1252}%
\special{pa 2424 1246}%
\special{pa 2392 1234}%
\special{pa 2364 1216}%
\special{pa 2338 1196}%
\special{pa 2316 1170}%
\special{pa 2302 1144}%
\special{pa 2294 1112}%
\special{pa 2290 1082}%
\special{pa 2286 1048}%
\special{pa 2282 1016}%
\special{pa 2274 984}%
\special{pa 2264 954}%
\special{pa 2250 924}%
\special{pa 2232 898}%
\special{pa 2212 872}%
\special{pa 2190 850}%
\special{pa 2164 828}%
\special{pa 2138 810}%
\special{pa 2110 796}%
\special{pa 2080 782}%
\special{pa 2050 768}%
\special{pa 2020 758}%
\special{pa 2000 750}%
\special{sp}%
%
\special{pn 13}%
\special{sh 1}%
\special{ar 2260 940 10 10 0  6.28318530717959E+0000}%
%
\special{pn 8}%
\special{pa 2070 1250}%
\special{pa 2110 1180}%
\special{fp}%
\special{pa 2070 1250}%
\special{pa 2140 1270}%
\special{fp}%
\put(20.1000,-15.7000){\makebox(0,0)[lb]{$\gamma$}}%
\end{picture}%

\vspace{0.2cm}

\input{figure6-2.tex}

\vspace{0.2cm}

\input{figure6-3.tex}

\end{center}

\begin{prop}
\label{8inSigma}
Let $\gamma$ be a figure eight on $\Sigma$. Then the pair
$(\Sigma,\gamma)$ is diffeormorphic to one of the pairs
in Figure 6.
\end{prop}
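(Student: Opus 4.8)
The plan is to reduce the statement to the two local inputs already available: the list of ways a pair of pants sits inside $\Sigma$ (Lemma \ref{PinSig}) and the normal form for a figure eight in a pair of pants (Proposition \ref{8inP}). First I would isotope $\gamma$ so that it lies in ${\rm Int}(\Sigma)$, put $Q:=N(\gamma)$ a closed regular neighborhood — which is diffeomorphic to a pair of pants by the argument in the proof of Lemma \ref{Lem8inP}, and lies in ${\rm Int}(\Sigma)$ — and apply Lemma \ref{PinSig} to $Q$. This already tells me that $(\Sigma,Q)$ is diffeomorphic to one of cases $1$--$4$ of Figure 5.

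Next I would normalize the position of $\gamma$ inside $Q$. Cutting $\gamma$ at its double point produces two simple closed curves, the \emph{lobes}; by Lemma \ref{Lem8inP} two of the three components of $\partial Q$ are parallel to the lobes, while the third, call it $C_3$, is not. By Proposition \ref{8inP} the loop $\gamma\subset Q$ is isotopic rel $\partial Q$ to one of the standard figure eights $\gamma_1,\gamma_2,\gamma_3$, these are carried into one another by self-diffeomorphisms of $Q$ realizing arbitrary permutations of $\partial Q$, and the three models are distinguished exactly by which component of $\partial Q$ plays the role of $C_3$ (for $\gamma_i\simeq\delta_j\delta_k^{-1}$ it is the $\delta_i$-boundary). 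Hence the triple $(Q,\gamma,\partial Q)$ is standard. By the usual change-of-coordinates principle for surfaces glued along their boundaries — which reconstructs $\Sigma$ from $Q$, the complement $\Sigma\setminus{\rm Int}(Q)$ and the boundary identification, the latter being unique up to boundary Dehn twists that do not alter the diffeomorphism type of the pair — it follows that $(\Sigma,\gamma)$ is determined by the diffeomorphism type of $(\Sigma,Q)$ together with the choice of which component of $\partial Q$ is $C_3$, taken modulo the action on $\pi_0(\partial Q)$ of the group of diffeomorphisms of $\Sigma$ preserving $Q$.

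It then remains to run through the four cases of Figure 5 and read off Figure 6. In case $1$, where $\Sigma\setminus{\rm Int}(Q)\cong\Sigma_{g,4}$, all three components of $\partial Q$ are equivalent under diffeomorphisms of $\Sigma$ fixing $\partial\Sigma$, so there is a single pair, namely case I. In cases $2$ and $3$, exactly one component of $\partial Q$ lies in the component of $\Sigma\setminus{\rm Int}(Q)$ containing $\partial\Sigma$ and the remaining two can be interchanged, so there are exactly two sub-cases according as $C_3$ is or is not that distinguished component, giving II-a/II-b and III-a/III-b; case $4$ splits into IV-a/IV-b by the same dichotomy. Finally one must check which parameter values genuinely yield a figure eight: if a lobe-parallel component of $\partial Q$ bounds a disk in $\Sigma$ then $\gamma$ is homotopic to a simple closed curve, and if $C_3$ bounds a disk $D$ then $Q\cup D$ is an annulus in which $\gamma\simeq\delta_1\delta_2^{-1}\simeq\delta_1^{\,2}$ (using $\delta_1\delta_2\delta_3=1$ and $\delta_3\mapsto 1$), so $\gamma$ is homotopic to the square of a simple closed curve; both possibilities are excluded by the definition of a figure eight. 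This is what forces $h\ge 2$ in cases III-a/III-b (so that $\Sigma_{h-1,1}$ is not a disk) and $k_1,k_2>0$ in cases IV-a/IV-b, exactly the ranges appearing in Figure 6.

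The hard part will be this last, bookkeeping, step: one has to determine, configuration by configuration, precisely which permutations of $\pi_0(\partial Q)$ are induced by self-diffeomorphisms of $(\Sigma,Q)$ — equivalently by diffeomorphisms of $\Sigma$ fixing $\partial\Sigma$ pointwise — and to rule out all the degenerate parameter values, so that the resulting enumeration of pairs coincides with Figure 6 exactly and without repetition. The preceding reductions are routine once Lemma \ref{PinSig} and Proposition \ref{8inP} are in hand.
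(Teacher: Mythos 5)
Your proposal is correct and follows essentially the same route as the paper: take a regular neighborhood, apply Lemma \ref{PinSig} to locate it in $\Sigma$, apply Proposition \ref{8inP} to normalize $\gamma$ inside it, observe that the residual choice is which boundary component of the pair of pants plays the role of $C_3$, and then enumerate cases while excluding the parameter values ($h=1$ in case 3, $k_i=0$ in case 4) that would make $\gamma$ homotopic to a power of a simple closed curve. Your explicit appeal to the change-of-coordinates principle and your justification of the excluded parameter values merely spell out what the paper leaves implicit.
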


\begin{proof}
Let $N(\gamma)$ be a closed regular neighborhood of $\gamma$.
By Lemma \ref{PinSig}, the pair $(\Sigma,N(\gamma))$ is diffeomorphic
to one of the cases in Figure 5.
If we identify $N(\gamma)$ with $P$, then $\gamma \subset N(\gamma)\cong P$
is isotopic to one of the $\gamma_i$, by Proposition \ref{8inP}.
Note that the choice of $\gamma_i$ corresponds to the choice of
two of the boundary components of $P$.
As we did in Lemma \ref{Lem8inP}, let $C_1$ and $C_2$ be the
boundary components of $N(\gamma)$ which are homotopic to
the simple closed curves obtained by dividing $\gamma$ at
its unique self intersection point.

In the case 1, the curves $C_i$ are all non-separating. Therefore,
we can arrange that $C_1$ and $C_2$ correspond to undotted circles
in the case 1. This is the case I. In the case 2, if $C_1$ and $C_2$
are non-separating, this is the case II-a. The other case
is the case II-a.
The case 3 is treated similarly, and we have the cases III-a or
III-b. Note that in this case $h=1$ is excluded, since
$\gamma$ is not homotopic to a power of a simple closed curve.
In the case 4, if none of $C_1$ or $C_2$ appears on the boundary
of $\Sigma_{h,2}$, this is the case IV-a. The other case
is the case IV-b. Again, to ensure $\gamma$ to be a figure eight,
we need $k_1$ and $k_2$ to be positive.
This completes the proof.
\end{proof}

We shall call a figure eight $\gamma$ on $\Sigma$ is {\it non-separating}
if $\Sigma \setminus {\rm Int}(N(\gamma))$ is connected,
and {\it separating} if $\Sigma \setminus {\rm Int}(N(\gamma))$ is not
connected. In Figure 6, the case I is
non-separating, and the others are separating.

\section{Proof of the main theorem}

In this section we show the main theorem of this paper.

\subsection{Statement and outline of proof}

\begin{thm}
\label{mainthm}
Let $\gamma$ be a figure eight on $\Sigma$.
Then the generalized Dehn twist $t_{\gamma}$
is not a mapping class in the sense of Definition \ref{defmap}.
\end{thm}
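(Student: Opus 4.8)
The plan is to argue by contradiction, combining the support criterion (Theorem~\ref{support}) and the classification of configurations (Proposition~\ref{8inSigma}) with an explicit computation in low degree. Suppose $t_{\gamma}$ is a mapping class. By Theorem~\ref{support} it is represented by a diffeomorphism supported in a closed regular neighborhood $N$ of $\gamma$. Since $\gamma$ is a figure eight, $N$ is a pair of pants; write $\partial N=C_1\amalg C_2\amalg C_3$, labelled as in Lemma~\ref{Lem8inP}, so that $C_1,C_2$ are the boundary components of $N$ homotopic to the two simple closed curves obtained by cutting $\gamma$ at its double point. The mapping class group of a pair of pants relative to its boundary is free abelian of rank three on the three boundary Dehn twists (see \cite{FM}); as $C_1,C_2,C_3$ are disjoint these twists commute in $\mathcal{M}_{g,1}$, so $t_{\gamma}=t_{C_1}^{a_1}t_{C_2}^{a_2}t_{C_3}^{a_3}$ for some $a_i\in\mathbb{Z}$. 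Fix a symplectic expansion $\theta$. Applying $T^{\theta}$ and Theorem~\ref{logDT} gives $e^{-L^{\theta}(\gamma)}=e^{-a_1L^{\theta}(C_1)}e^{-a_2L^{\theta}(C_2)}e^{-a_3L^{\theta}(C_3)}$, and since the derivations $L^{\theta}(C_i)$ pairwise commute (being logarithms of commuting automorphisms) this collapses to
$$L^{\theta}(\gamma)=a_1L^{\theta}(C_1)+a_2L^{\theta}(C_2)+a_3L^{\theta}(C_3)$$
as derivations of $\widehat{T}$. Comparing degree-two terms, where $[L^{\theta}(x)]_2=[x]^{\otimes2}$, together with the homological relations $[\gamma]=[C_1]-[C_2]$ and $[C_3]=[C_1]+[C_2]$ (valid for suitable orientations), one is led to $(a_1,a_2,a_3)=(2,2,-1)$ (Proposition~\ref{22-1}). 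So it is enough to prove that
$$L^{\theta}(\gamma)\neq 2L^{\theta}(C_1)+2L^{\theta}(C_2)-L^{\theta}(C_3).$$

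To make this manageable I would reduce to finitely many models. By Proposition~\ref{8inSigma} some diffeomorphism of $\Sigma$ carries $(\Sigma,\gamma)$ to one of the seven pairs in Figure~6, and since $L^{\theta}(f(x))=T^{\theta}(f)\circ L^{\theta}(x)\circ T^{\theta}(f)^{-1}$ for $f\in\mathcal{M}_{g,1}$, the displayed inequality is invariant under such a diffeomorphism; hence it suffices to treat each model separately. For each configuration I would write down explicit based representatives $\widetilde{\gamma},C_1,C_2,C_3\in\pi$ in terms of the symplectic generators $\alpha_i,\beta_i$ of Figure~2: by Lemma~\ref{Lem8inP}, $\widetilde{\gamma}$ is conjugate to $xy^{-1}$ and $C_3$ to $xy$, where $x,y$ are based representatives of $C_1,C_2$.

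Now the computation. Put $\ell^{\theta}=\log\theta$ and $\ell^{\theta}(x)=\sum_{k\ge1}\ell_k(x)$ with $\ell_k(x)\in H^{\otimes k}$, $\ell_1(x)=[x]$. From $L^{\theta}(x)=\tfrac12N(\ell^{\theta}(x)^2)$ and Lemma~\ref{propN}(1) one gets
$$[L^{\theta}(x)]_2=[x]^{\otimes2},\qquad [L^{\theta}(x)]_3=N([x]\,\ell_2(x)),\qquad [L^{\theta}(x)]_4=N([x]\,\ell_3(x))+\tfrac12N(\ell_2(x)^2),$$
while the Baker--Campbell--Hausdorff formula expresses $\ell_2,\ell_3$ of $\widetilde{\gamma},C_1,C_2,C_3$ through $\ell_2,\ell_3$ of the generators. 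The degree-two parts of the two sides of the inequality above agree by the homological relations; the degree-three parts also agree, because after substituting $(a_i)=(2,2,-1)$ and the Baker--Campbell--Hausdorff expansions the generator-dependent data cancels and the remainder has the form $N(v[w,v])$ with $v,w\in H$, which vanishes by the cyclic symmetry of $N$ (Lemma~\ref{propN}). Thus the first potential discrepancy is the degree-four part of $L^{\theta}(\gamma)-2L^{\theta}(C_1)-2L^{\theta}(C_2)+L^{\theta}(C_3)$, an element of $H^{\otimes4}$, and the goal is to show it is nonzero in each of the seven cases (equivalently, that the associated derivation is nonzero on some symplectic generator). Since the automorphism $t_{\gamma}\circ(t_{C_1}^2t_{C_2}^2t_{C_3}^{-1})^{-1}$ does not depend on $\theta$, neither does this tensor; so one may compute with whatever symplectic expansion is convenient, for instance one for which $\ell_2$ vanishes on the generators, leaving only the universal contribution built from $[C_1],[C_2]$ and $\omega$.

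The main obstacle is exactly this final verification. The coefficients $(2,2,-1)$ are arranged so that all expansion-dependent terms drop out, but pushing the Baker--Campbell--Hausdorff expansion and the operator $N$ through all seven configurations is lengthy, and --- more delicately --- in several of the models the naive degree-four terms vanish by the cyclic symmetry of $N$, so one must isolate the genuinely non-cyclic part of the tensor, or equivalently evaluate the degree-four derivation on a carefully chosen $\alpha_i$ or $\beta_i$ and read off a nonzero coefficient. Proving non-vanishing in every case is unavoidably case-by-case, which is the reason the argument is ad hoc.
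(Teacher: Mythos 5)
Your overall strategy (support criterion, reduction to a pair of pants, $t_{\gamma}=t_{C_1}^{a_1}t_{C_2}^{a_2}t_{C_3}^{a_3}$, determination of the exponents, contradiction in a higher-degree term) is exactly the paper's, but the two computational claims that carry your argument are wrong for most configurations, and this is a genuine gap. First, the exponents $(2,2,-1)$ cannot be read off from the degree-two comparison alone except in the non-separating case I, where $X=[C_1]=A_1$ and $Y=[C_2]=A_2$ are linearly independent. In every separating configuration $X$ and $Y$ are proportional or zero (for instance $X=B_h$, $Y=-B_h$ in II-a and III-a, and $X=Y=0$ in IV-a and IV-b), so the degree-two part of the relation yields only partial information such as $a_1+a_2=4$, or no information at all. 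The paper has to compare degree-four terms, using Lemma \ref{L_4(xy)} together with the linear-independence statement of Lemma \ref{deg4indep}, and must treat II-a and II-b with $h=1$ separately via the coincidences $C_1=C_2$, resp.\ $C_1=C_3$.

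Second, and more seriously, your final step --- showing that the degree-four part of $L(xy^{-1})-2L(x)-2L(y)+L(xy)$ is nonzero in each of the seven cases --- fails in six of them. When $[X,Y]=0$, which holds in all separating cases, Lemma \ref{L_4(xy)} gives exactly $L_4(xy)+L_4(xy^{-1})=2L_4(x)+2L_4(y)$, so the degree-four tensor you propose to examine is identically zero; no choice of generator on which to evaluate the derivation will extract a nonzero coefficient, because there is nothing ``non-cyclic'' left to isolate. The discrepancy first appears in degree six for the configurations II-a, II-b, III-a, III-b, where it equals $-\frac{1}{12}N\bigl(([X,\ell_2(y)]+[\ell_2(x),Y])([X,\ell_2(y)]+[\ell_2(x),Y])\bigr)$ by Lemma \ref{L_6(xy)}, and only in degree eight for IV-a and IV-b, where even the degree-six expression vanishes because $X=Y=0$ and one needs the term $-\frac{1}{12}N([\ell_2(x),\ell_2(y)][\ell_2(x),\ell_2(y)])$ of (\ref{L_8(xy)}). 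As written, your plan terminates with a true identity rather than a contradiction in every separating case; to close the argument you need the higher-order Baker--Campbell--Hausdorff computations and the non-vanishing statements (Lemma \ref{N[[]][[]]} and its degree-eight analogue).
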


Let $\gamma \subset \Sigma$ be a figure eight and $p\in \gamma$
the unique self intersection of $\gamma$. Let
$\gamma \colon [0,1]\to \Sigma$ be a parametrization of $\gamma$
such that $\gamma(0)=\gamma(1/2)=\gamma(1)=p$. Taking a
path $\delta$ from $*\in \partial \Sigma$ to $p$, we denote
$$x:=\delta \cdot \gamma|_{[0,1/2]} \cdot \delta^{-1},\
y:=\delta \cdot (\gamma|_{[1/2,1]})^{-1} \cdot \delta^{-1} \in \pi.$$
Let $N(\gamma)$ be a closed regular neighborhood of $\gamma$. Then
$N(\gamma)$ is diffeomorphic to a pair of pants.
We denote by $C_1$, $C_2$, and $C_3$ the boundary component
of $N(\gamma)$ freely homotopic to $x$, $y$, and $xy$, respectively.
Note that $\gamma$ is freely homotopic to $xy^{-1}$.

For each configuration of a figure eight given in
Proposition \ref{8inSigma}, $x$ and $y$ can be represented
in terms of symplectic generators, as Table 1. Here
we identify the surfaces in Figure 6 with the surface in Figure 2
by a natural way, and the parametrization of $\gamma$ is indicated in Figure 6.

\begin{center}
Table 1

\begin{tabular}{l|c|c}
 & $x$ & $y$ \\
 \hline
 I & $\alpha_1$ & $\alpha_2$ \\
II-a & $(\prod_{i=1}^h [\alpha_i,\beta_i])\beta_h$ & $\beta_h^{-1}$ \\
II-b & $(\prod_{i=1}^h [\alpha_i,\beta_i])\beta_h$ &
$(\prod_{i=1}^h [\alpha_i,\beta_i])^{-1}$ \\
III-a & $(\prod_{i=1}^h [\alpha_i,\beta_i])\beta_h$ &
$\alpha_h\beta_h^{-1}\alpha_h^{-1}$ \\
III-b & $(\prod_{i=1}^h [\alpha_i,\beta_i])\beta_h$ &
$(\prod_{i=1}^{h-1}[\alpha_i,\beta_i])^{-1}$ \\
IV-a & $\prod_{i=1}^{k_1}[\alpha_i,\beta_i]$ &
$\prod_{i=k_1+1}^{k_1+k_2}[\alpha_i,\beta_i]$ \\
IV-b & $\prod_{i=1}^{k_1}[\alpha_i,\beta_i]$ &
$(\prod_{i=1}^{k_1+k_2}[\alpha_i,\beta_i])^{-1}$ \\
\end{tabular}
\end{center}

The proof of Theorem \ref{mainthm} depends on Proposition \ref{8inSigma}
and explicit computations of the invariant $L^{\theta}$ for $x,y,xy$, and $xy^{-1}$.
Suppose $t_{\gamma}$ is a mapping class. By Theorem \ref{support},
$t_{\gamma}$ is represented by a diffeomorphism whose support lies in
$N(\gamma)$. It is known that the mapping class group of a pair of
pants is the free abelian group of rank three generated
by the boundary-parallel Dehn twists. See for example, \cite{FM} \S3.6.
It follows that there exist $m_1,m_2,m_3\in \mathbb{Z}$ such that
\begin{equation}
\label{t_8mustbe}
t_{\gamma}=t_{C_1}^{m_1}t_{C_2}^{m_2}t_{C_3}^{m_3}.
\end{equation}
Let us choose a Magnus expansion $\theta$ satisfying $\theta(\zeta)=\exp(\omega)$.
Since the Dehn twists $t_{C_1}$, $t_{C_2}$, and $t_{C_3}$ commute
with each other, so do the derivations $L^{\theta}(C_1)$,
$L^{\theta}(C_2)$, and $L^{\theta}(C_3)$. Therefore,
applying $T^{\theta}$ to (\ref{t_8mustbe}) and taking the logarithm,
we obtain
\begin{equation}
\label{tobemujun}
L^{\theta}(xy^{-1})=m_1L^{\theta}(x)+m_2L^{\theta}(y)+m_3L^{\theta}(xy).
\end{equation}
In fact, for any configuration of a figure eight, we get
$$t_{\gamma}=t_{C_1}^2t_{C_2}^2t_{C_3}^{-1}$$
as an intermediate result, see Proposition \ref{22-1}.
Looking at this equation in higher degree leads us to a contradiction.
To do this, we will need the lower terms of the invariant
$L^{\theta}$. For simplicity we write $\ell=\ell^{\theta}$,
$L=L^{\theta}$ and
$$\ell(x)=\sum_{m=1}^{\infty}\ell_m(x),\ L(x)=\sum_{m=2}^{\infty} L_m(x),$$
where $\ell_m(x),L_m(x)\in H^{\otimes m}$. We have
$$L_m(x)=\frac{1}{2}\sum_{i=1}^{m-1}N(\ell_i(x)\ell_{m-i}(x)).$$
By the Baker-Campbell-Hausdorff formula, in the completed
free Lie algebra generated by variables $u$ and $v$, we have 
\begin{eqnarray}
\log (\exp u \exp v) &=&
u+v+\frac{1}{2}[u,v]+\frac{1}{12}[u-v,[u,v]]
-\frac{1}{24}[u,[v,[u,v]]] \nonumber \\
& &
+(\rm{higher\ terms}). \label{BCH}
\end{eqnarray}
For $x,y\in \pi$, we denote $X=[x], Y=[y] \in H$.
By (\ref{BCH}), we get the lower terms of $\ell(xy)=\log (\theta(x)\theta(y))$.
For example, we have
\begin{eqnarray}
\label{Haus1}
\ell_1(xy) &=& \ell_1(x)+\ell_1(y) \nonumber \\
\ell_2(xy) &=& \ell_2(x)+\ell_2(y)+\frac{1}{2}[X,Y] \nonumber \\
\ell_3(xy) &=& \ell_3(x)+\ell_3(y)+\frac{1}{2}([X,\ell_2(y)]+[\ell_2(x),Y])
+\frac{1}{12}[X-Y,[X,Y]],
\end{eqnarray}
and if $[X,Y]=0$, we have
\begin{eqnarray}
\label{Haus2}
\ell_2(xy) &=& \ell_2(x)+\ell_2(y) \nonumber \\
\ell_3(xy) &=& \ell_3(x)+\ell_3(y)+\frac{1}{2}([X,\ell_2(y)]+[\ell_2(x),Y]) \nonumber \\
\ell_4(xy) &=& \ell_4(x)+\ell_4(y)+\frac{1}{2}
([X,\ell_3(y)]+[\ell_2(x),\ell_2(y)]+[\ell_3(x),Y]) \nonumber \\
& & +\frac{1}{12}[X-Y,[X,\ell_2(y)]+[\ell_2(x),Y]]
\end{eqnarray}
and
\begin{eqnarray}
\label{Haus3}
\ell_5(xy) &=& \ell_5(x)+\ell_5(y)+\frac{1}{2}
([X,\ell_4(y)]+[\ell_2(x),\ell_3(y)]+[\ell_3(x),\ell_2(y)]+[\ell_4(x),Y]) \nonumber \\
& &+\frac{1}{12}[\ell_2(x)-\ell_2(y),[X,\ell_2(y)]+[\ell_2(x),Y]] \nonumber \\
& & +\frac{1}{12}[X-Y,[X,\ell_3(y)]+[\ell_2(x),\ell_2(y)]+[\ell_3(x),Y]] \nonumber \\
& & -\frac{1}{24}[X,[Y,[X,\ell_2(y)]+[\ell_2(x),Y]]].
\end{eqnarray}

\subsection{determination of coefficients}

The goal of this subsection is to prove Proposition \ref{22-1}.

\begin{lem}
\label{L_2(xy)}
\begin{eqnarray*}
L_2(xy) &=& L_2(x)+L_2(y)+N(XY) \\
L_2(xy^{-1}) &=& L_2(x)+L_2(y)-N(XY)
\end{eqnarray*}
\end{lem}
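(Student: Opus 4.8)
The plan is to unwind the definition of $L_2$ directly; this is a degree-$2$ computation and requires nothing beyond the formula for $L_m$ in terms of the $\ell_i$, the behaviour of Magnus expansions on inverses and products, and the cyclic symmetry of $N$ recorded in Lemma \ref{propN}.

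First I would record that in degree $2$ the sum defining $L_m(x)$ has a single term, so
$$L_2(x)=\tfrac12 N(\ell_1(x)\ell_1(x))=\tfrac12 N(XX),$$
since $\ell_1(x)=[x]=X$, and likewise $L_2(y)=\tfrac12 N(YY)$. Next I would compute the linear terms of $\ell$ for the products in question: because $\theta$ is a Magnus expansion one has $\ell_1(xy)=\ell_1(x)+\ell_1(y)=X+Y$ (this is also the degree-$1$ part of (\ref{Haus1})), and since $[y^{-1}]=-[y]=-Y$ we get $\ell_1(xy^{-1})=X-Y$.

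Then I would substitute these into $L_2$, expand bilinearly, and collect terms. For $xy$ this gives
$$L_2(xy)=\tfrac12 N\bigl((X+Y)(X+Y)\bigr)=\tfrac12 N(XX)+\tfrac12 N(YY)+\tfrac12 N(XY)+\tfrac12 N(YX),$$
and Lemma \ref{propN}(1), namely $N(uv)=N(vu)$, identifies $N(YX)$ with $N(XY)$, so the two cross terms combine to $N(XY)$ and we obtain $L_2(xy)=L_2(x)+L_2(y)+N(XY)$. The identical computation with $X-Y$ in place of $X+Y$ turns the cross terms into $-N(XY)$, yielding the second formula.

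There is essentially no obstacle here: the only points requiring care are the sign change $[y^{-1}]=-[y]$, which produces the minus sign in the $xy^{-1}$ case, and the appeal to the cyclic invariance of $N$ to merge the two cross terms. Everything else is a direct unwinding of the definitions.
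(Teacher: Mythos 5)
Your proposal is correct and matches the paper's own argument: both compute $L_2(xy)=\tfrac12 N((X+Y)(X+Y))$, expand bilinearly, and merge the cross terms via the cyclic invariance $N(uv)=N(vu)$ from Lemma \ref{propN}. The only cosmetic difference is that the paper obtains the second identity by the substitution $y\mapsto y^{-1}$ in the first, whereas you redo the expansion with $X-Y$; these are the same computation.
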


\begin{proof}
Since $\ell_1(x)=[x]=X$,
\begin{eqnarray*}
L_2(xy) &=& \frac{1}{2}N(\ell_1(xy)\ell_1(xy))=\frac{1}{2}N((X+Y)(X+Y)) \\
&=& \frac{1}{2}(N(XX)+N(YY)+N(XY)+N(YX)) \\
&=& L_2(x)+L_2(y)+N(XY).
\end{eqnarray*}
Here we use Lemma \ref{propN}. The other one follows from
the first one by replacing $y$ with $y^{-1}$. 
\end{proof}

\begin{lem}
\label{L_4(xy)}
Suppose $[X,Y]=0$. Then
\begin{eqnarray*}
L_4(xy) &=& L_4(x)+L_4(y)+N(X\ell_3(y)+Y\ell_3(x))
+N(\ell_2(x)\ell_2(y)) \\
L_4(xy^{-1}) &=& L_4(x)+L_4(y)-N(X\ell_3(y)+Y\ell_3(x))
-N(\ell_2(x)\ell_2(y))
\end{eqnarray*}
\end{lem}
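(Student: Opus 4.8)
The plan is to expand $L_4$ directly from the recursion $L_m(z)=\frac{1}{2}\sum_{i=1}^{m-1}N(\ell_i(z)\ell_{m-i}(z))$ and substitute the Baker--Campbell--Hausdorff expansions \eqref{Haus2}, which are available here because $[X,Y]=0$. The first step is to record that, using the cyclic symmetry $N(uv)=N(vu)$ of Lemma \ref{propN}(1), for any $z\in\pi$ one has
\[
L_4(z)=N(\ell_1(z)\ell_3(z))+\tfrac12 N(\ell_2(z)\ell_2(z)),
\]
so in particular $L_4(x)=N(X\ell_3(x))+\tfrac12 N(\ell_2(x)\ell_2(x))$, and likewise for $y$.

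Next I would substitute into $L_4(xy)=N((X+Y)\ell_3(xy))+\tfrac12 N(\ell_2(xy)^2)$ the values $\ell_1(xy)=X+Y$, $\ell_2(xy)=\ell_2(x)+\ell_2(y)$, and $\ell_3(xy)=\ell_3(x)+\ell_3(y)+\tfrac12([X,\ell_2(y)]+[\ell_2(x),Y])$ from \eqref{Haus2}, and expand. The quadratic term $\tfrac12 N(\ell_2(xy)^2)$ contributes $\tfrac12 N(\ell_2(x)^2)+\tfrac12 N(\ell_2(y)^2)+N(\ell_2(x)\ell_2(y))$ after applying $N(uv)=N(vu)$ to the mixed term. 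The term $N((X+Y)\ell_3(xy))$ contributes $N(X\ell_3(x))+N(Y\ell_3(x))+N(X\ell_3(y))+N(Y\ell_3(y))$ together with the cross term $\tfrac12 N\bigl((X+Y)([X,\ell_2(y)]+[\ell_2(x),Y])\bigr)$. Regrouping $N(X\ell_3(x))+\tfrac12 N(\ell_2(x)^2)=L_4(x)$, similarly for $y$, and combining $N(Y\ell_3(x))+N(X\ell_3(y))=N(X\ell_3(y)+Y\ell_3(x))$ yields the claimed identity for $L_4(xy)$, once we know the cross term vanishes.

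The only step that is not pure substitution, and the part I would treat with care, is the vanishing $N\bigl((X+Y)([X,\ell_2(y)]+[\ell_2(x),Y])\bigr)=0$. Writing $[X,\ell_2(y)]=X\ell_2(y)-\ell_2(y)X$ and cycling the rightmost factor to the front by $N(uv)=N(vu)$ gives $N((X+Y)[X,\ell_2(y)])=N\bigl(((X+Y)X-X(X+Y))\ell_2(y)\bigr)=N([Y,X]\,\ell_2(y))$; an identical manipulation gives $N((X+Y)[\ell_2(x),Y])=N\bigl((Y(X+Y)-(X+Y)Y)\ell_2(x)\bigr)=N([Y,X]\,\ell_2(x))$. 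Both are $0$ since $[X,Y]=0$. (Alternatively, Lemma \ref{propN}(2) can be invoked to the same effect.) This establishes the formula for $L_4(xy)$; I do not anticipate any genuine obstacle beyond this bookkeeping.

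Finally, the formula for $L_4(xy^{-1})$ follows by applying the first formula with $y$ replaced by $y^{-1}$: one has $[y^{-1}]=-Y$, so $[[x],[y^{-1}]]=0$ and the hypothesis still holds; moreover $\ell_m(y^{-1})=-\ell_m(y)$ because $\log(\theta(y)^{-1})=-\log\theta(y)$, and $L_4(y^{-1})=L_4(y)$ (directly from the recursion, or from \cite{KK} Lemma 2.6.4). Substituting these into the expression for $L_4(x\cdot y^{-1})$ and using linearity of $N$ produces exactly the sign changes in front of $N(X\ell_3(y)+Y\ell_3(x))$ and $N(\ell_2(x)\ell_2(y))$, completing the proof.
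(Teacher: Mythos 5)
Your proposal is correct and follows essentially the same route as the paper: write $L_4=N(\ell_1\ell_3)+\tfrac12 N(\ell_2\ell_2)$, substitute the BCH values from (\ref{Haus2}), kill the cross term $N\bigl((X+Y)([X,\ell_2(y)]+[\ell_2(x),Y])\bigr)$ via Lemma \ref{propN} together with $[X,Y]=0$, and obtain the second identity by replacing $y$ with $y^{-1}$. Your explicit verification of the cross-term vanishing and of $\ell_m(y^{-1})=-\ell_m(y)$, $L_4(y^{-1})=L_4(y)$ only spells out details the paper leaves implicit.
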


\begin{proof}
By (\ref{Haus2}), we have
\begin{eqnarray*}
L_4(xy)&=& N(\ell_1(xy)\ell_3(xy))+\frac{1}{2}N(\ell_2(xy)\ell_2(xy)) \\
&=& N \left((X+Y)(\ell_3(x)+\ell_3(y)
+\frac{1}{2}([X,\ell_2(y)]+[\ell_2(x),Y])) \right) \\
& & +\frac{1}{2}N((\ell_2(x)+\ell_2(y))(\ell_2(x)+\ell_2(y))).
\end{eqnarray*}
By Lemma \ref{propN}, we have $N(X[X,\ell_2(y)])=N([X,X]\ell_2(y))=0$,
$N(X[\ell_2(x),Y])=-N(X[Y,\ell_2(x)])=-N([X,Y]\ell_2(x))=0$ (using
$[X,Y]=0$), etc. Therefore
$$L_4(xy)=N((X+Y)(\ell_3(x)+\ell_3(y)))
+\frac{1}{2}N((\ell_2(x)+\ell_2(y))(\ell_2(x)+\ell_2(y))).$$
Expanding the right hand side, we obtain the first formula.
The other one follows from the first one by replacing $y$ with $y^{-1}$.
\end{proof}

To advance our computation we need explicit values of a symplectic expansion.
In \cite{Mas}, Massuyeau gave some lower terms of a symplectic
expansion. If we denote it by $\theta^0$, then the values of
$\ell^{\theta^0}=\log \theta^0$ on symplectic generators are as follows: modulo $\widehat{T}_4$,
\begin{eqnarray}
\ell^{\theta^0}(\alpha_i)&\equiv & A_i+\frac{1}{2}[A_i,B_i] \nonumber \\
& & -\frac{1}{12}[B_i,[A_i,B_i]]+\frac{1}{2}\sum_{j<i}[A_i,[A_j,B_j]], \nonumber \\
\ell^{\theta^0}(\beta_i)&\equiv& B_i-\frac{1}{2}[A_i,B_i] \nonumber \\
& & +\frac{1}{12}[A_i,[A_i,B_i]]+\frac{1}{4}[B_i,[A_i,B_i]]
+\frac{1}{2}\sum_{j<i}[B_i,[A_j,B_j]] \label{masexp}.
\end{eqnarray}
Note that our conventions is different from \cite{Mas} Definition 2.15.

\begin{lem}
\label{indep1}
Suppose $h\ge 2$. Then the tensors
$u_1:=N(\textstyle{\sum}_{i=1}^h[A_i,B_i] \textstyle{\sum}_{i=1}^h[A_i,B_i])$,
$u_2:=N(\textstyle{\sum}_{i=1}^h[A_i,B_i] [A_h,B_h])$, and
$u_3:=N([A_h,B_h][A_h,B_h])$ are linearly independent.
\end{lem}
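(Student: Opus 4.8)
The plan is to reduce the statement to three short coefficient computations by introducing a convenient grading on $H^{\otimes 4}$. Write $e_i:=[A_i,B_i]=A_iB_i-B_iA_i\in H^{\otimes 2}$ and $s:=\sum_{i=1}^{h-1}e_i$, so that $\sum_{i=1}^{h}[A_i,B_i]=s+e_h$. First I would expand the three products and use $N(e_ie_j)=N(e_je_i)$ (Lemma \ref{propN}(1)) to write
\begin{eqnarray*}
u_1 &=& N\bigl((s+e_h)(s+e_h)\bigr)=N(s^2)+2N(se_h)+N(e_h^2),\\
u_2 &=& N\bigl((s+e_h)e_h\bigr)=N(se_h)+N(e_h^2),\\
u_3 &=& N(e_h^2).
\end{eqnarray*}
Putting $w_1:=N(s^2)$, $w_2:=N(se_h)$, $w_3:=N(e_h^2)$, the matrix expressing $(u_1,u_2,u_3)$ in terms of $(w_1,w_2,w_3)$ is unipotent and upper triangular, hence invertible; so it suffices to prove that $w_1,w_2,w_3$ are linearly independent.

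For this I would grade $H^{\otimes 4}$ by letting a length-$4$ monomial in the basis $A_1,B_1,\dots,A_g,B_g$ have degree equal to the number of its tensor factors lying in $\{A_h,B_h\}$. Since $N$ merely cyclically permutes tensor positions, it preserves this grading. Now $s^2$ is a combination of monomials of degree $0$, $se_h$ of monomials of degree $2$, and $e_h^2$ of monomials of degree $4$; hence $w_1,w_2,w_3$ lie in three pairwise complementary graded pieces of $H^{\otimes 4}$, and linear independence reduces to showing that each $w_i\neq 0$.

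Finally I would exhibit explicit monomials. The monomial $A_hB_hA_hB_h$ occurs in $w_3=N(e_h^2)$ with coefficient $4$ (in the expansion of $N(e_h^2)$ it comes only from the terms $N(A_hB_hA_hB_h)$ and $N(B_hA_hB_hA_h)$, each contributing $2$). Since $h\geq 2$, the index $1$ appears in $s$, and a short computation gives that $A_1B_1A_hB_h$ occurs with coefficient $1$ in $N(e_1e_h)$ and with coefficient $0$ in $N(e_ke_h)$ for $2\leq k\leq h-1$ (the latter contain no factor $A_1$), so it occurs with coefficient $1$ in $w_2=\sum_{k=1}^{h-1}N(e_ke_h)$. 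Likewise $A_1B_1A_1B_1$ occurs only in the summand $N(e_1^2)$ of $w_1=\sum_{i,j=1}^{h-1}N(e_ie_j)$, again with coefficient $4$. Hence $w_1,w_2,w_3$ are all nonzero, therefore linearly independent, and so are $u_1,u_2,u_3$. The hypothesis $h\geq 2$ is used only to guarantee $s\neq 0$; for $h=1$ all three tensors equal $N(e_1^2)$. I do not expect a genuine obstacle here beyond the bookkeeping of which cyclic words occur in each $N(e_ie_j)$, which the degree argument keeps to a minimum.
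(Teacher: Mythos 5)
Your proof is correct and takes essentially the same route as the paper: the paper also writes the $u_i$ in the monomial basis of $H^{\otimes 4}$ and reads off the coefficients of $A_1B_1A_1B_1$, $A_1B_1A_hB_h$, and $A_hB_hA_hB_h$ (namely $(4,2,4)$, $(0,1,4)$, $(0,0,4)$ for $u_1,u_2,u_3$), concluding from the triangular pattern. Your intermediate change of basis to $N(s^2)$, $N(se_h)$, $N(e_h^2)$ and the grading by the number of tensor factors in $\{A_h,B_h\}$ are a clean reorganization of the same coefficient computation, not a different argument.
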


\begin{proof}
Note that the tensors $X_{i_1}\cdots X_{i_m}$, $X_{i_k} \in \{ A_i,B_i\}_i$
constitute a basis of $H^{\otimes m}$. Writing $u_i$ in terms of this basis,
we see that the coefficients of $A_1B_1A_1B_1$ in $u_1$, $u_2$, and $u_3$
are $4$, $0$, and $0$, respectively; the coefficients of $A_1B_1A_hB_h$ in
$u_1$, $u_2$, and $u_3$ are $2$, $1$, and $0$, respectively;
the coefficients of $A_hB_hA_hB_h$ in $u_1$, $u_2$, and $u_3$ are
all $4$. This shows that $u_1$, $u_2$, and $u_3$ are linearly
independent.
\end{proof}

\begin{lem}
\label{indep2}
Let $k_1,k_2 \ge 1$ and set $\omega_1:=\sum_{i=1}^{k_1} [A_i,B_i]$ and
$\omega_2:=\sum_{i=k_1+1}^{k_2}[A_i,B_i]$. Then the tensors
$N(\omega_1 \omega_1)$, $N(\omega_1 \omega_2)$, and
$N(\omega_1 \omega_2)$ are linearly independent.
\end{lem}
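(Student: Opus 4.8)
The plan is to mimic the proof of Lemma \ref{indep1}: expand each of the three tensors $N(\omega_1\omega_1)$, $N(\omega_1\omega_2)$, $N(\omega_2\omega_2)$ in the monomial basis $\{X_{i_1}X_{i_2}X_{i_3}X_{i_4}\colon X_{i_k}\in\{A_i,B_i\}_i\}$ of $H^{\otimes 4}$, and exhibit three basis monomials on which the resulting $3\times 3$ coefficient matrix is invertible. The key structural point, which makes the present lemma even easier than Lemma \ref{indep1}, is that the index blocks $\{1,\dots,k_1\}$ carried by $\omega_1$ and $\{k_1+1,\dots,k_1+k_2\}$ carried by $\omega_2$ are disjoint. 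Since $N([A_i,B_i][A_j,B_j])$ only produces monomials whose four letters have indices in $\{i,j\}$, it follows that $N(\omega_1\omega_1)=\sum_{i,j\le k_1}N([A_i,B_i][A_j,B_j])$ involves no letter with index $>k_1$, $N(\omega_2\omega_2)$ involves no letter with index $\le k_1$, and every monomial appearing in $N(\omega_1\omega_2)$ has exactly one letter-index in each block.

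Concretely I would track the coefficients of the three monomials $A_1B_1A_1B_1$, $A_1B_1A_{k_1+1}B_{k_1+1}$, and $A_{k_1+1}B_{k_1+1}A_{k_1+1}B_{k_1+1}$, all legitimate because $k_1,k_2\ge 1$. By the block observation, $A_1B_1A_1B_1$ has coefficient $0$ in $N(\omega_1\omega_2)$ and $N(\omega_2\omega_2)$; $A_{k_1+1}B_{k_1+1}A_{k_1+1}B_{k_1+1}$ has coefficient $0$ in $N(\omega_1\omega_1)$ and $N(\omega_1\omega_2)$; and $A_1B_1A_{k_1+1}B_{k_1+1}$ has coefficient $0$ in $N(\omega_1\omega_1)$ and $N(\omega_2\omega_2)$. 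For the surviving entries, exactly as in Lemma \ref{indep1} the expansion of $N([A_1,B_1][A_1,B_1])$ contributes $A_1B_1A_1B_1$ with coefficient $4$, so the coefficient of $A_1B_1A_1B_1$ in $N(\omega_1\omega_1)$ is $4$, and symmetrically the coefficient of $A_{k_1+1}B_{k_1+1}A_{k_1+1}B_{k_1+1}$ in $N(\omega_2\omega_2)$ is $4$. Finally, among the summands of $\omega_1\omega_2$ only $[A_1,B_1][A_{k_1+1},B_{k_1+1}]$ can produce the cyclic word $(A_1,B_1,A_{k_1+1},B_{k_1+1})$, and a direct expansion of $N([A_1,B_1][A_{k_1+1},B_{k_1+1}])$ shows this monomial appears with coefficient $1$. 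Thus, in the ordered basis $(A_1B_1A_1B_1,\ A_1B_1A_{k_1+1}B_{k_1+1},\ A_{k_1+1}B_{k_1+1}A_{k_1+1}B_{k_1+1})$, the three tensors have coordinate vectors $(4,0,0)$, $(0,1,0)$, $(0,0,4)$, which are linearly independent.

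The only step that requires genuine care — the "main obstacle," such as it is — is checking that no unexpected contribution lands on one of the three chosen monomials: in particular that a cyclic rotation coming from a different summand of $\omega_1\omega_2$ does not also produce $A_1B_1A_{k_1+1}B_{k_1+1}$. This is settled by noting that the cyclic word $(A_1,B_1,A_{k_1+1},B_{k_1+1})$ has its $A_1$ immediately followed by $B_1$, which forces the contributing commutator product to be $[A_1,B_1][A_{k_1+1},B_{k_1+1}]$ (the product $[A_{k_1+1},B_{k_1+1}][A_1,B_1]$ belongs to $\omega_2\omega_1$, not $\omega_1\omega_2$), together with the block-disjointness already used. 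Beyond this no computation is needed that is not already contained in the proof of Lemma \ref{indep1}.
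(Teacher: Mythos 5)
Your proposal is correct and follows exactly the route the paper intends: the paper's proof of Lemma \ref{indep2} simply says it is ``proved by the same way as in Lemma \ref{indep1},'' i.e.\ by comparing coefficients of chosen monomials of the form $X_{i_1}X_{i_2}X_{i_3}X_{i_4}$, and your choice of $A_1B_1A_1B_1$, $A_1B_1A_{k_1+1}B_{k_1+1}$, $A_{k_1+1}B_{k_1+1}A_{k_1+1}B_{k_1+1}$ with the resulting diagonal coefficient matrix $\mathrm{diag}(4,1,4)$ is a correct and complete instance of that argument (you also rightly read the statement's third tensor as $N(\omega_2\omega_2)$ and the upper limit of $\omega_2$ as $k_1+k_2$, which is what the lemma is used for in Table 2).
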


\begin{proof}
This is proved by the same way as in Lemma \ref{indep1}.
\end{proof}

\begin{lem}
\label{deg4indep}
Let $x$ and $y$ be one of the pair other than the case I in Table 1.
We assume $h\ge 2$ in the cases II-a and II-b.
Then for $L=L^{\theta^0}$,  the tensors
$L_4(x)$, $L_4(y)$, and
$N(X\ell_3(y)+Y\ell_3(x))+
N(\ell_2(x)\ell_2(y))$ are linearly independent.
\end{lem}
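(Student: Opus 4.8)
The plan is to prove the lemma by a direct computation, treating the six configurations II-a, II-b, III-a, III-b, IV-a, IV-b of Table 1 in turn; throughout I write $\ell=\ell^{\theta^0}$ and $L=L^{\theta^0}$. The first observation is that in each of these cases the homology classes $X=[x]$ and $Y=[y]$ lie in $\{0,\pm B_h\}$, since a product of commutators is trivial in $H$; in particular $[X,Y]=0$, so Lemma \ref{L_4(xy)} applies, and, using the displayed formula for $L_m$ together with Lemma \ref{propN}, $L_4(z)=N([z]\ell_3(z))+\tfrac12 N(\ell_2(z)\ell_2(z))$ for any $z\in\pi$. So everything reduces to computing $\ell_2$ and $\ell_3$ of $x$ and $y$.

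The key computational input is that $\ell_3\bigl(\prod_{i=1}^h[\alpha_i,\beta_i]\bigr)=0$. A short Baker--Campbell--Hausdorff computation from (\ref{masexp}) gives $\ell_3([\alpha_i,\beta_i])=[A_i,\ell_2(\beta_i)]+[\ell_2(\alpha_i),B_i]+\tfrac12[A_i+B_i,[A_i,B_i]]=0$ (the degree-$\ge3$ parts of $\ell(\alpha_i)$ and $\ell(\beta_i)$ do not contribute), and since a product of commutators has vanishing degree-$1$ part, every cross term in (\ref{Haus1}) involving an $\ell_1$ vanishes, so $\ell_3\bigl(\prod_{i=1}^h[\alpha_i,\beta_i]\bigr)=\sum_{i=1}^h\ell_3([\alpha_i,\beta_i])=0$ by induction on $h$. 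Feeding this into (\ref{masexp}), (\ref{Haus1})--(\ref{Haus3}), and the identities $\ell(g^{-1})=-\ell(g)$ and $\ell(ghg^{-1})=e^{\mathrm{ad}(\ell(g))}\ell(h)$, one obtains closed formulas in low degree; for example, in cases II and III, where $x=\bigl(\prod_{i=1}^h[\alpha_i,\beta_i]\bigr)\beta_h$, one finds $X=B_h$, $\ell_2(x)=\bigl(\sum_{i<h}[A_i,B_i]\bigr)+\tfrac12[A_h,B_h]$ and $\ell_3(x)=\tfrac1{12}[A_h,[A_h,B_h]]-\tfrac14[B_h,[A_h,B_h]]$, with similar but shorter formulas for $y$; in cases IV one has $X=Y=0$ and $\ell_2(x),\ell_2(y)$ are sums of basic brackets $[A_i,B_i]$.

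Next I would assemble $L_4(x)$, $L_4(y)$ and the third tensor $w:=N(X\ell_3(y)+Y\ell_3(x))+N(\ell_2(x)\ell_2(y))$, simplifying the resulting $N$-terms by Lemma \ref{propN}: for instance $N(B_h[A_h,[A_h,B_h]])=N([B_h,A_h][A_h,B_h])=-N([A_h,B_h][A_h,B_h])$ and $N(B_h[B_h,u])=N([B_h,B_h]u)=0$. After these reductions, in cases II and III the three tensors $L_4(x),L_4(y),w$ all lie in the span of $N\bigl((\sum_{i<h}[A_i,B_i])^2\bigr)$, $N\bigl((\sum_{i<h}[A_i,B_i])[A_h,B_h]\bigr)$ and $N([A_h,B_h][A_h,B_h])$; an invertible linear change of variables identifies this span with that of the triple in Lemma \ref{indep1}, so it is $3$-dimensional precisely when $h\ge2$ (this is the only place the hypothesis is used), and a short determinant check in each of the four sub-cases shows that $L_4(x),L_4(y),w$ form a basis of it. In cases IV-a and IV-b, $X=Y=0$ makes the $\ell_3$-terms vanish, so $L_4(x)=\tfrac12 N(\ell_2(x)^2)$, $L_4(y)=\tfrac12 N(\ell_2(y)^2)$ and $w=N(\ell_2(x)\ell_2(y))$; with $\ell_2(x)=\omega_1$ and $\ell_2(y)=\omega_2$ (resp.\ $-(\omega_1+\omega_2)$ in case IV-b) these are a nondegenerate linear combination of $N(\omega_1\omega_1),N(\omega_1\omega_2),N(\omega_2\omega_2)$, which are linearly independent by Lemma \ref{indep2}.

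The bulk of the work is the middle step --- pushing the Baker--Campbell--Hausdorff expansions of $x$ and $y$ through degree $3$ in each configuration and then expanding the $L_4$'s --- while the genuine crux is recognizing the right uses of Lemma \ref{propN} so that the apparently many $N$-terms collapse onto the $3$-dimensional span above, after which linear independence is a single $3\times3$ determinant. (Case I really must be excluded here: there $X=A_1$ and $Y=A_2$ are independent in $H$ and the relevant degree-$4$ tensors need not be; that configuration is handled separately, already in degree $2$.)
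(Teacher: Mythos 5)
Your proposal is correct and follows essentially the same route as the paper: a direct computation of $\ell_2,\ell_3$ from Massuyeau's expansion (\ref{masexp}) via Baker--Campbell--Hausdorff, reduction of the three tensors to the three-dimensional span of $u_1,u_2,u_3$ (resp.\ $N(\omega_i\omega_j)$), and then linear independence via Lemmas \ref{indep1} and \ref{indep2} together with a $3\times 3$ determinant check; your intermediate values (e.g.\ $L_4(x)=\tfrac12 u_1-\tfrac12 u_2+\tfrac1{24}u_3$ in cases II--III) agree with the paper's Table 2.
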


\begin{proof}
For simplicity we denote $M:=N(X\ell_3(y)+Y\ell_3(x))+N(\ell_2(x)\ell_2(y))$.
By a direct computation using (\ref{masexp}), we get the values of
$L_4(x)$, $L_4(y)$, and $M$ for $\theta=\theta^0$ as Table 2.
\begin{center}
Table 2

\begin{tabular}{l|c|c}
 & $L_4(x)$ & $L_4(y)$ \\
 \hline
 II-a & $(1/2)u_1-(1/2)u_2+(1/24)u_3$ & $(1/24)u_3$ \\
II-b & $(1/2)u_1-(1/2)u_2+(1/24)u_3$ & $(1/2)u_1$  \\
III-a & $(1/2)u_1-(1/2)u_2+(1/24)u_3$ & $(1/24)u_3$  \\
III-b & $(1/2)u_1-(1/2)u_2+(1/24)u_3$ & $(1/2)u_1-u_2-(1/2)u_3$  \\
IV-a & $(1/2)N(\omega_1 \omega_1)$ & $(1/2)N(\omega_2 \omega_2)$  \\
IV-b & $(1/2)N(\omega_1 \omega_2)$ & $(1/2)N(\omega_1 \omega_1)+
(1/2)N(\omega_2 \omega_2)+N(\omega_1 \omega_2)$ \\
\end{tabular}

\begin{tabular}{l|c}
 & $M$ \\
 \hline
 II-a & $(1/2)u_2-(1/12)u_3$ \\
II-b & $-u_1+(1/2)u_2$ \\
 III-a & $-(1/2)u_2+(5/12)u_3$ \\
 III-b & $-u_1+(3/2)u_2-(1/2)u_3$ \\
 IV-a & $N(\omega_1 \omega_2)$ \\
 IV-b & $-N(\omega_1 \omega_1)-N(\omega_1 \omega_2)$ \\
 \end{tabular}
\end{center}

These computations together with Lemmas \ref{indep1} and \ref{indep2}
give the result.
\end{proof}

Now we have an intermediate result to deduce a contradiction.

\begin{prop}
\label{22-1}
Suppose $t_{\gamma}$ is a mapping class. Then we have
$t_{\gamma}=t_{C_1}^2t_{C_2}^2t_{C_3}^{-1}$.
\end{prop}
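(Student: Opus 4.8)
The plan is to determine the integers $m_1,m_2,m_3$ occurring in the identity (\ref{tobemujun}),
$$L(xy^{-1})=m_1L(x)+m_2L(y)+m_3L(xy),$$
which already follows from the hypothesis that $t_{\gamma}$ is a mapping class. To this end I would fix the Massuyeau expansion $\theta=\theta^0$ whose low-order terms are recorded in (\ref{masexp}), read off $x$ and $y$ (hence $X=[x]$, $Y=[y]$) from Table 1 for each of the seven configurations of Proposition \ref{8inSigma}, and compare the degree-$2$ and degree-$4$ homogeneous parts of (\ref{tobemujun}) by means of Lemmas \ref{L_2(xy)} and \ref{L_4(xy)}.

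For the non-separating case I we have $x=\alpha_1$, $y=\alpha_2$, so $X=A_1$, $Y=A_2$. Extracting the degree-$2$ part of (\ref{tobemujun}) and applying Lemma \ref{L_2(xy)} gives
$$(m_1+m_3)L_2(x)+(m_2+m_3)L_2(y)+m_3N(XY)=L_2(x)+L_2(y)-N(XY).$$
Since $L_2(x)=\frac{1}{2}N(A_1A_1)$, $L_2(y)=\frac{1}{2}N(A_2A_2)$ and $N(XY)=N(A_1A_2)$ are linearly independent in $H^{\otimes2}$, comparing coefficients forces $m_3=-1$ and $m_1=m_2=2$.

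For every other configuration one first checks from Table 1 that $[X,Y]=0$: indeed each of $X$ and $Y$ is $0$ or $\pm B_h$. Extracting the degree-$4$ part of (\ref{tobemujun}) and applying Lemma \ref{L_4(xy)} gives
$$(1-m_1-m_3)L_4(x)+(1-m_2-m_3)L_4(y)+(-1-m_3)M=0,$$
where $M=N(X\ell_3(y)+Y\ell_3(x))+N(\ell_2(x)\ell_2(y))$. When $h\ge2$ in cases II-a, II-b — and automatically in cases III-a, III-b, IV-a, IV-b — Lemma \ref{deg4indep} (which rests on Lemmas \ref{indep1} and \ref{indep2}) says that $L_4(x)$, $L_4(y)$, $M$ are linearly independent, so once more $m_3=-1$, $m_1=m_2=2$, and the proposition follows in all these cases.

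It remains to treat the degenerate subcases II-a and II-b with $h=1$. Here $x=[\alpha_1,\beta_1]\beta_1=\alpha_1\beta_1\alpha_1^{-1}$ is conjugate to the simple closed curve $\beta_1$; in case II-a the loop $y=\beta_1^{-1}$ is also freely homotopic to $\beta_1$, so the disjoint boundary components $C_1,C_2$ of $N(\gamma)$ are freely homotopic, hence isotopic in $\Sigma$, and $t_{C_1}=t_{C_2}$; in case II-b the loop $xy=[\alpha_1,\beta_1]\beta_1[\alpha_1,\beta_1]^{-1}$ is conjugate to $\beta_1$, so likewise $t_{C_1}=t_{C_3}$. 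In these subcases $u_1=u_2=u_3$, so the degree-$4$ argument no longer separates the three exponents, but it still pins down the combinations that matter: for II-a ($h=1$) the degree-$2$ part gives $m_1+m_2=4$ and then the degree-$4$ part gives $m_3=-1$, so $t_{C_1}^{m_1}t_{C_2}^{m_2}t_{C_3}^{m_3}=t_{C_1}^{4}t_{C_3}^{-1}=t_{C_1}^2t_{C_2}^2t_{C_3}^{-1}$; for II-b ($h=1$) the degree-$2$ part gives $m_1+m_3=1$ and the degree-$4$ part gives $m_2=2$, so $t_{C_1}^{m_1}t_{C_2}^{m_2}t_{C_3}^{m_3}=t_{C_1}t_{C_2}^{2}=t_{C_1}^2t_{C_2}^2t_{C_3}^{-1}$. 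The main obstacle is not conceptual but organizational: carrying out the explicit computation of $L_4(x)$, $L_4(y)$, $M$ for all seven configurations from (\ref{masexp}) (this is Table 2), verifying the corresponding linear independence, and correctly isolating the two degenerate configurations in which the boundary curves $C_i$ of $N(\gamma)$ fail to be pairwise non-isotopic in $\Sigma$.
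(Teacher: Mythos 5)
Your proposal is correct and follows essentially the same route as the paper: it extracts the degree-two part of (\ref{tobemujun}) for the non-separating case, the degree-four part together with Lemma \ref{deg4indep} for the generic separating cases, and isolates the two degenerate configurations II-a and II-b with $h=1$ (where $t_{C_1}=t_{C_2}$, resp.\ $t_{C_1}=t_{C_3}$), obtaining exactly the relations $m_1+m_2=4$, $m_3=-1$, resp.\ $m_1+m_3=1$, $m_2=2$ used in the paper. The only cosmetic difference is that you make the free-homotopy justification for the coincidences among the $C_i$ in the degenerate cases more explicit than the paper does.
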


\begin{proof}
Let $\theta^0$ be the symplectic expansion of (\ref{masexp})
and $L=L^{\theta^0}$.
If $\gamma$ is non-separating, $X=[\alpha_1]$ and $Y=[\alpha_2]$,
hence $L_2(x)=XX$, $L_2(y)=YY$, and $N(XY)$ are linearly
independent. By Lemma \ref{L_2(xy)}, the degree two part
of (\ref{tobemujun}) is equivalent to
\begin{eqnarray*}
L_2(x)+L_2(y)-N(XY) &=&
m_1L_2(x)+m_2L_2(y)+m_3(L_2(x)+L_2(y)+N(XY)) \\
&=& (m_1+m_3)L_2(x)+(m_2+m_3)L_2(y)+m_3N(XY).
\end{eqnarray*}
Comparing the coefficients, we get $(m_1,m_2,m_3)=(2,2,-1)$, i.e.,
$t_{\gamma}=t_{C_1}^2t_{C_2}^2t_{C_3}^{-1}$.
Suppose $\gamma$ is separating. If the configuration of $\gamma$ is neither
II-a nor II-b with $h=1$, the same argument applied to
the degree four part of (\ref{tobemujun}), together with
Lemmas \ref{L_4(xy)} and \ref{deg4indep} gives the result.

Suppose $h=1$ and the configuration of $\gamma$ is II-a or II-b. These
cases are rather special, since $C_1=C_2$ for the former case and
$C_1=C_3$ for the latter. If the configuration of $\gamma$ is II-a,
$x=\alpha_h\beta_h\alpha_h^{-1}$
and $y=\beta_h^{-1}$. Then $L_2(x)=L_2(y)=B_hB_h$ and $L_2(xy)=0$, $L_2(xy^{-1})=4B_hB_h$.
Looking at the degree two part of (\ref{tobemujun}), we get $m_1+m_2=4$.
Also we have $L_4(x)=L_4(y)=(1/24)u_3$, $L_4(xy)=(1/4)u_3$, and $L_4(xy^{-1})=-(1/12)u_3$.
Looking at the degree four part of (\ref{tobemujun}), we get $m_3=-1$.
Since $t_{C_1}=t_{C_2}$, we have $t_{\gamma}=t_{C_1}^2t_{C_2}^2t_{C_3}^{-1}$.
If the configuration of $\gamma$ is II-b, then $x=\alpha_h\beta_h\alpha_h^{-1}$,
$y=[\beta_h,\alpha_h]$. By a similar computation, we get $m_1+m_3=1$ and
$m_2=2$. Since $t_{C_1}=t_{C_3}$, we have $t_{\gamma}=t_{C_1}^2t_{C_2}^2t_{C_3}^{-1}$.
This completes the proof.
\end{proof}

\subsection{deduce a contradiction}

By Proposition \ref{22-1}, if $t_{\gamma}$ is a mapping class,
we have
\begin{equation}
\label{tobemujun2}
L(xy)+L(xy^{-1})=2L(x)+2L(y).
\end{equation}

Suppose $\gamma$ is non-separating. Let $\theta=\theta^0$ be the symplectic expansion
of (\ref{masexp}). By a straightforward computation,
for $x=\alpha_1$ and $y=\alpha_2$ we have
$$L_4(xy)+L_4(xy^{-1})-2L_4(x)-2L_4(y)=-\frac{1}{12}N([A_1,A_2][A_1,A_2]) \neq 0.$$
This contradicts to (\ref{tobemujun2}), which completes the proof of Theorem \ref{mainthm}
for non-separating $\gamma$.

Hereafter we assume $\gamma$ is separating.

\begin{lem}
\label{L_6(xy)}
Suppose $[X,Y]=0$. Then
\begin{equation}
\label{N[][]}
L_6(xy)+L_6(xy^{-1})-2L_6(x)-2L_6(y)
=-\frac{1}{12}N(([X,\ell_2(y)]+[\ell_2(x),Y])([X,\ell_2(y)]+[\ell_2(x),Y])).
\end{equation}
\end{lem}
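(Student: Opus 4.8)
The plan is to reduce the left-hand side of (\ref{N[][]}) to a universal expression in the Baker--Campbell--Hausdorff corrections and then to collapse it using Lemma \ref{propN} together with the hypothesis $[X,Y]=0$. First I would record a reduction valid for an arbitrary Magnus expansion. Writing $\Lambda:=\ell(xy)-\ell(x)-\ell(y)$ for the BCH correction, so that $\ell(xy)=\ell(x)+\ell(y)+\Lambda$, and expanding $L(xy)=\tfrac12 N\big(\ell(xy)\ell(xy)\big)$ with the help of $N(uv)=N(vu)$ (Lemma \ref{propN}(1)), one gets
\[
L(xy)-L(x)-L(y)=N(\ell(x)\ell(y))+N(\ell(x)\Lambda)+N(\ell(y)\Lambda)+\tfrac12 N(\Lambda\Lambda).
\]
Since $\theta$ is a homomorphism, $\theta(y^{-1})=\theta(y)^{-1}$, hence $\ell(y^{-1})=-\ell(y)$ and $L(y^{-1})=L(y)$. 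Applying the displayed identity to $xy^{-1}$, with correction $\Lambda':=\ell(xy^{-1})-\ell(x)+\ell(y)$ (which is $\Lambda$ with $y$ replaced by $y^{-1}$), and adding the two identities, the terms $N(\ell(x)\ell(y))$ cancel, and I obtain
\[
L(xy)+L(xy^{-1})-2L(x)-2L(y)=N\big(\ell(x)(\Lambda+\Lambda')\big)+N\big(\ell(y)(\Lambda-\Lambda')\big)+\tfrac12 N(\Lambda\Lambda+\Lambda'\Lambda').
\]

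Next I would extract the degree-$6$ part. Put $P:=[X,\ell_2(y)]+[\ell_2(x),Y]$. By (\ref{Haus2}), under $[X,Y]=0$ one has $\Lambda_2=0$ and $\Lambda_3=\tfrac12 P$, while (\ref{Haus2})--(\ref{Haus3}) give $\Lambda_4$ and $\Lambda_5$ explicitly. Passing from $y$ to $y^{-1}$ replaces each of the three combinations $[X,\ell_2(y)]+[\ell_2(x),Y]$, $[X,\ell_3(y)]+[\ell_2(x),\ell_2(y)]+[\ell_3(x),Y]$ and $[X,\ell_4(y)]+[\ell_2(x),\ell_3(y)]+[\ell_3(x),\ell_2(y)]+[\ell_4(x),Y]$ by its negative, so $\Lambda'_3=-\tfrac12 P$ and I can read off $(\Lambda\pm\Lambda')_k$ for $k=3,4,5$; in particular $(\Lambda+\Lambda')_3=0$ and $(\Lambda-\Lambda')_3=P$. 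Hence the degree-$6$ contribution of $\tfrac12 N(\Lambda\Lambda+\Lambda'\Lambda')$ is $\tfrac12 N(\Lambda_3\Lambda_3+\Lambda'_3\Lambda'_3)=\tfrac14 N(PP)$, and the degree-$6$ parts of $N(\ell(x)(\Lambda+\Lambda'))$ and $N(\ell(y)(\Lambda-\Lambda'))$ pair only the first few homogeneous pieces of the two factors (degrees $1,2$ of $\ell(x)$ against degrees $5,4$ of $\Lambda+\Lambda'$; degrees $1,2,3$ of $\ell(y)$ against degrees $5,4,3$ of $\Lambda-\Lambda'$).

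Finally I would collapse these. Substituting the explicit degree-$4$ and degree-$5$ corrections and applying Lemma \ref{propN} — cyclic invariance and $N([u,v]w)=N(u[v,w])$ — together with $[X,Y]=0$, every term containing $\ell_3$ or $\ell_4$ of $x$ or of $y$ is killed: those coming from $N(\ell(x)(\Lambda+\Lambda'))$ sit inside an expression of the form $N(X[Y,\cdot\,])=N([X,Y]\,\cdot\,)=0$, and those coming from $N(\ell(y)(\Lambda-\Lambda'))$ cancel in pairs via the identity $N(u[w,v])+N(v[w,u])=0$ (a consequence of Lemma \ref{propN}) and via $N(Y[\ell_k(x),Y])=-N(Y[\ell_k(x),Y])=0$. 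What survives regroups, using $[\ell_2(x),Y]+[X,\ell_2(y)]=P$, into $-\tfrac16 N(PP)$ from $N(\ell(x)(\Lambda+\Lambda'))$ and $-\tfrac16 N(PP)$ from $N(\ell(y)(\Lambda-\Lambda'))$. Adding the three contributions gives $\big(\tfrac14-\tfrac16-\tfrac16\big)N(PP)=-\tfrac1{12}N(PP)$, which is (\ref{N[][]}).

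I expect this last step to be the main obstacle: the BCH expansion produces a large number of degree-$6$ terms, and verifying that exactly the right ones cancel — and that the residue is a multiple of $N(PP)$ rather than something genuinely new involving $\ell_3$ or $\ell_4$ — is a delicate piece of bookkeeping with Lemma \ref{propN} and the hypothesis $[X,Y]=0$. The reduction in the first step is what makes it tractable, since it isolates precisely the part of $L_6$ that records the interaction between $x$ and $y$.
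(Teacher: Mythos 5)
Your proposal is correct, and I verified that the degree-$6$ bookkeeping (the vanishing of all $\ell_3$-, $\ell_4$-, $\ell_5$-cross-terms and the tally $\tfrac14-\tfrac16-\tfrac16=-\tfrac1{12}$) agrees with the paper's computation. The approach is essentially the same as the paper's — expand via (\ref{Haus2})--(\ref{Haus3}) and collapse with Lemma \ref{propN} and $[X,Y]=0$ — the only difference being that you isolate the cross-terms up front through the polarization identity in $\Lambda,\Lambda'$, whereas the paper computes $L_6(xy)$ in full via the decomposition $N(Z\ell_5(z))+N(\ell_2(z)\ell_4(z))+\tfrac12 N(\ell_3(z)\ell_3(z))$ and then symmetrizes in $y\mapsto y^{-1}$.
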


\begin{proof}
We have
$$L_6(z)=N(Z\ell_5(z))+N(\ell_2(z)\ell_4(z))
+\frac{1}{2}N(\ell_3(z)\ell_3(z)).$$
We denote $L_6^{\prime}(z):=N(Z\ell_5(z))$,
$L_6^{\prime \prime}(z):=N(\ell_2(z)\ell_4(z))$,
and $L_6^{\prime \prime \prime}(z):=\frac{1}{2}N(\ell_3(z)\ell_3(z))$.
By (\ref{Haus3}),
\begin{eqnarray*}
L_6^{\prime}(xy) &=& N((X+Y)\ell_5(xy)) \\
&=& N((X+Y)(\ell_5(x)+\ell_5(y))) \\
& & +\frac{1}{2}N((X+Y)([X,\ell_4(y)]+[\ell_2(x),\ell_3(y)]
+[\ell_3(x),\ell_2(y)]+[\ell_4(x),Y])) \\
& & +\frac{1}{12}N((X+Y)[\ell_2(x)-\ell_2(y),[X,\ell_2(y)]+[\ell_2(x),Y]]) \\
& & +\frac{1}{12}N((X+Y)[X-Y,[X,\ell_3(y)]+[\ell_2(x),\ell_2(y)]+[\ell_3(x),Y]]) \\
& & -\frac{1}{24}N((X+Y)[X,[Y,[X,\ell_2(y)]+[\ell_2(x),Y]]]).
\end{eqnarray*}
By Lemma \ref{propN} and $[X,Y]=0$, the fourth term vanish:
\begin{eqnarray*}
& & N((X+Y)[X-Y,[X,\ell_3(y)]+[\ell_2(x),\ell_2(y)]+[\ell_3(x),Y]]) \\
&=& N([X+Y,X-Y]([X,\ell_3(y)]+[\ell_2(x),\ell_2(y)]+[\ell_3(x),Y]])) \\
&=& 0,
\end{eqnarray*}
so does the fifth term. Also, we have
$N((X+Y)[X,\ell_4(y)])=N([X+Y,X]\ell_4(y))=0$ and
$N((X+Y)[\ell_4(x),Y])=0$. Therefore,
\begin{eqnarray*}
L_6^{\prime}(xy) &=& N((X+Y)(\ell_5(x)+\ell_5(y))) \\
& & +\frac{1}{2}N((X+Y)([\ell_2(x),\ell_3(y)]+[\ell_3(x),\ell_2(y)])) \\
& & +\frac{1}{12}N((X+Y)[\ell_2(x)-\ell_2(y),[X,\ell_2(y)]+[\ell_2(x),Y]]).
\end{eqnarray*}
Next, by (\ref{Haus2}),
\begin{eqnarray*}
L_6^{\prime \prime}(xy) &=& N(\ell_2(xy)\ell_4(xy)) \\
&=& N((\ell_2(x)+\ell_2(y))(\ell_4(x)+\ell_4(y))) \\
& & +\frac{1}{2}N((\ell_2(x)+\ell_2(y))
([X,\ell_3(y)]+[\ell_2(x),\ell_2(y)]+[\ell_3(x),Y]) \\
& & +\frac{1}{12}N((\ell_2(x)+\ell_2(y))[X-Y,[X,\ell_2(y)]+[\ell_2(x),Y]]).
\end{eqnarray*}
By Lemma \ref{propN}, we have $N((\ell_2(x)+\ell_2(y))[\ell_2(x),\ell_2(y)])=0$.
Therefore,
\begin{eqnarray*}
L_6^{\prime \prime}(xy) &=& N((\ell_2(x)+\ell_2(y))(\ell_4(x)+\ell_4(y))) \\
& & +\frac{1}{2}N((\ell_2(x)+\ell_2(y))
([X,\ell_3(y)]+[\ell_3(x),Y]) \\
& & +\frac{1}{12}N((\ell_2(x)+\ell_2(y))[X-Y,[X,\ell_2(y)]+[\ell_2(x),Y]]).
\end{eqnarray*}
Finally, by (\ref{Haus2}),
\begin{eqnarray*}
L_6^{\prime \prime \prime}(xy) &=& \frac{1}{2}N(\ell_3(xy)\ell_3(xy)) \\
&=& \frac{1}{2}N\left(
\left(\ell_3(x)+\ell_3(y)+\frac{1}{2}([X,\ell_2(y)]+[\ell_2(x),Y]) \right)^{\otimes 2}
\right) \\
&=& \frac{1}{2}N((\ell_3(x)+\ell_3(y))(\ell_3(x)+\ell_3(y))) \\
& & +\frac{1}{2}N((\ell_3(x)+\ell_3(y))([X,\ell_2(y)]+[\ell_2(x),Y])) \\
& & +\frac{1}{8}N(([X,\ell_2(y)]+[\ell_2(x),Y])([X,\ell_2(y)]+[\ell_2(x),Y])).
\end{eqnarray*}
Summing all the three terms and using Lemma \ref{propN}, we get
\begin{eqnarray*}
L_6(xy) &=& N((X+Y)(\ell_5(x)+\ell_5(y))
+N((\ell_2(x)+\ell_2(y))(\ell_4(x)+\ell_4(y))) \\
& & +\frac{1}{2}N((\ell_3(x)+\ell_3(y))(\ell_3(x)+\ell_3(y))) \\
& & -\frac{1}{24}N(([X,\ell_2(y)]+[\ell_2(x),Y])([X,\ell_2(y)]+[\ell_2(x),Y])).
\end{eqnarray*}
Replacing $y$ with $y^{-1}$, we have
\begin{eqnarray*}
L_6(xy^{-1}) &=& N((X-Y)(\ell_5(x)-\ell_5(y))
+N((\ell_2(x)-\ell_2(y))(\ell_4(x)-\ell_4(y))) \\
& & +\frac{1}{2}N((\ell_3(x)-\ell_3(y))(\ell_3(x)-\ell_3(y))) \\
& & -\frac{1}{24}N(([X,\ell_2(y)]+[\ell_2(x),Y])([X,\ell_2(y)]+[\ell_2(x),Y])).
\end{eqnarray*}
Hence
\begin{eqnarray*}
L_6(xy)+L_6(xy^{-1}) &=&
2N(X\ell_5(x)+Y\ell_5(y))+2N(\ell_2(x)\ell_4(x)+\ell_2(y)\ell_4(y)) \\
& & +N(\ell_3(x)\ell_3(x)+\ell_3(y)\ell_3(y)) \\
& & -\frac{1}{12}N(([X,\ell_2(y)]+[\ell_2(x),Y])([X,\ell_2(y)]+[\ell_2(x),Y])).
\end{eqnarray*}
Expanding the right hand side, we get the formula.
\end{proof}

\begin{lem}
\label{N[[]][[]]}
Let $h\ge 1$. Then
$$N([B_h,\textstyle{\sum}_{i=1}^h [A_i,B_i]]
[B_h,\textstyle{\sum}_{i=1}^h [A_i,B_i]]) \neq 0.$$
If $h \ge 2$, then
$$N([B_h,\textstyle{\sum}_{i=1}^{h-1} [A_i,B_i]]
[B_h,\textstyle{\sum}_{i=1}^{h-1} [A_i,B_i]]) \neq 0.$$
\end{lem}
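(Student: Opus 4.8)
The plan is to argue exactly as in Lemmas~\ref{indep1} and~\ref{indep2}: expand the given degree~$6$ tensor in the monomial basis $\{X_{i_1}\cdots X_{i_6}\mid X_{i_k}\in\{A_i,B_i\}_i\}$ of $H^{\otimes 6}$ and exhibit a single basis element occurring with nonzero coefficient. I would set $v:=[B_h,\,\textstyle\sum_i[A_i,B_i]]=\sum_i[B_h,[A_i,B_i]]$, where the index $i$ runs over $1\le i\le h$ for the first assertion and over $1\le i\le h-1$ for the second; in either case the index $1$ lies in the range of summation. The crucial point is that the generator $A_1$ occurs in $v$ \emph{only} through the single summand $[B_h,[A_1,B_1]]$, since each monomial of $[B_h,[A_i,B_i]]$ is a permutation of the three letters $B_h,A_i,B_i$. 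Hence any monomial in the expansion of $v\cdot v$ containing two occurrences of $A_1$ must come from $[B_h,[A_1,B_1]]\cdot[B_h,[A_1,B_1]]$. Since the operator $N$ on $H^{\otimes 6}$ is the cyclic symmetrization $N=\sum_{j=0}^{5}\sigma^{j}$ (with $\sigma$ the cyclic shift), the coefficient in $N(v\cdot v)$ of any fixed word containing two $A_1$'s equals its coefficient in $N\bigl([B_h,[A_1,B_1]]^{2}\bigr)$. It therefore suffices to prove $N\bigl([B_h,[A_1,B_1]]^{2}\bigr)\neq 0$; and for the first assertion with $h=1$, where $B_h=B_1$, I would handle a separate direct computation.

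For the generic case — the second assertion for all $h\ge 2$, and the first assertion for $h\ge 2$ — the generators $A_1,B_1,B_h$ are distinct; write $a=A_1$, $b=B_1$, $c=B_h$. Then
$$[B_h,[A_1,B_1]]=cab-cba-abc+bac,$$
so $[B_h,[A_1,B_1]]^{2}$ is a signed sum of $16$ monomials, each a concatenation of two permutations of $(a,b,c)$. I would single out the word $W_0:=A_1A_1B_1B_hB_hB_1=aabccb$. Among those $16$ monomials the only ones that are cyclic rotations of $W_0$ are $(-cba)\cdot(-abc)=cbaabc$ and $(-abc)\cdot(-cba)=abccba$, each occurring with coefficient $+1$; both have trivial cyclic period, so each contributes $+1$ to the coefficient of $W_0$ in $N\bigl([B_h,[A_1,B_1]]^{2}\bigr)$, which is therefore $2\neq 0$. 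This settles the generic case.

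For the first assertion with $h=1$ one computes directly $[B_1,[A_1,B_1]]=2\,B_1A_1B_1-B_1B_1A_1-A_1B_1B_1$, whose square is a signed sum of $9$ monomials; among these the only cyclic rotations of $W_0':=A_1A_1B_1B_1B_1B_1$ are $(-B_1B_1A_1)\cdot(-A_1B_1B_1)=B_1B_1A_1A_1B_1B_1$ and $(-A_1B_1B_1)\cdot(-B_1B_1A_1)=A_1B_1B_1B_1B_1A_1$, each with coefficient $+1$ and trivial cyclic period, so the coefficient of $W_0'$ in $N\bigl([B_1,[A_1,B_1]]^{2}\bigr)$ is $2\neq 0$. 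This completes the proof. The one point requiring care — the main obstacle — is the cyclic bookkeeping: one must be sure that \emph{all} monomials of each expansion that are cyclically equivalent to the chosen target word have been accounted for, with the correct signs and multiplicities. This is manageable here because every monomial of $[B_h,[A_1,B_1]]$ is a permutation of three fixed letters, so a monomial $w_1w_2$ of the square is a rotation of $W_0$ only when some cyclic cut of it splits into two permutations of $(a,b,c)$, and inspecting the rotations of $W_0$ shows that the two splittings listed above are the only possibilities.
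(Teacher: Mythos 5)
Your proposal is correct and follows the same method the paper invokes (``proved by the same way as in Lemma~\ref{indep1}''): expand in the monomial basis of $H^{\otimes 6}$ and exhibit a basis word with nonzero coefficient, here $A_1A_1B_1B_hB_hB_1$ (resp.\ $A_1A_1B_1B_1B_1B_1$ when $h=1$), whose coefficient you correctly compute to be $2$ after isolating the unique cross-term $[B_h,[A_1,B_1]]\otimes[B_h,[A_1,B_1]]$ that can contribute words with two $A_1$'s. The cyclic bookkeeping and signs all check out.
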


\begin{proof}
This is proved by the same way as in Lemma \ref{indep1}.
\end{proof}

Suppose the configuration of $\gamma$ is one of
II-a, II-b, III-a, and III-b. By a straightforward computation,
we see the right hand side of (\ref{N[][]}) in Lemma \ref{L_6(xy)}
for $L=L^{\theta^0}$ is
$$-\frac{1}{12}N([B_h,\textstyle{\sum}_{i=1}^h[A_i,B_i]]
[B_h,\textstyle{\sum}_{i=1}^h[A_i,B_i]])$$
if the configuration of $\gamma$ is II-a or II-b, and
$$-\frac{1}{12}N([B_h,\textstyle{\sum}_{i=1}^{h-1} [A_i,B_i]]
[B_h,\textstyle{\sum}_{i=1}^{h-1} [A_i,B_i]])$$
if the configuration of $\gamma$ is III-a or III-b.
By Lemma \ref{N[[]][[]]}, this contradicts to
(\ref{tobemujun2}).

Finally, we consider the cases IV-a and IV-b.

\begin{lem}
Suppose $X=Y=0$. Then
\begin{equation}
\label{L_8(xy)}
L_8(xy)+L_8(xy^{-1})-2L_8(x)-2L_8(y)=
-\frac{1}{12}N([\ell_2(x),\ell_2(y)][\ell_2(x),\ell_2(y)]).
\end{equation}
\end{lem}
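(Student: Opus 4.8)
The plan is to run, one filtration degree higher, the computation behind Lemmas~\ref{L_4(xy)} and~\ref{L_6(xy)}, exploiting that the hypothesis $X=Y=0$ forces $\ell(x),\ell(y)\in\widehat{T}_2$. Write $a_i:=\ell_i(x)$, $b_i:=\ell_i(y)$, $p:=\ell_2(x)$, $q:=\ell_2(y)$, and recall that $\theta(y^{-1})=\theta(y)^{-1}$ gives $\ell(y^{-1})=-\ell(y)$, so that, exactly as in the earlier lemmas, the expression for $L_8(xy^{-1})$ is obtained from that for $L_8(xy)$ by $b_i\mapsto -b_i$. First I would expand $\ell(xy)=\log(\exp\ell(x)\exp\ell(y))$ by~(\ref{BCH}). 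Since $\ell(x),\ell(y)\in\widehat{T}_2$, a $k$-fold bracket of them lies in $\widehat{T}_{2k}$, so the components $\ell_m(xy)$ with $m\le 6$ --- the only ones entering $L_8$ --- receive contributions only from the summands $\ell(x)+\ell(y)$, $\tfrac12[\ell(x),\ell(y)]$ and $\tfrac1{12}[\ell(x)-\ell(y),[\ell(x),\ell(y)]]$ of~(\ref{BCH}); the quadruple bracket already lies in $\widehat{T}_8$, and the remaining BCH terms lie deeper still. This gives $\ell_2(xy)=p+q$, $\ell_3(xy)=a_3+b_3$, $\ell_4(xy)=a_4+b_4+\tfrac12[p,q]$, $\ell_5(xy)=a_5+b_5+\tfrac12([p,b_3]+[a_3,q])$ and $\ell_6(xy)=a_6+b_6+\tfrac12([p,b_4]+[a_3,b_3]+[a_4,q])+\tfrac1{12}[p-q,[p,q]]$.

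Next I would substitute these into $L_8(z)=N(\ell_2(z)\ell_6(z))+N(\ell_3(z)\ell_5(z))+\tfrac12 N(\ell_4(z)\ell_4(z))$ with $z=xy$, expand using $N(uv)=N(vu)$, and form $L_8(xy)+L_8(xy^{-1})-2L_8(x)-2L_8(y)$. In this combination every monomial carrying an odd number of $b$-factors cancels against its image under $b_i\mapsto-b_i$, so only the $b$-even part survives (doubled). That part splits into: (i) $\tfrac1{12}N\!\big((p+q)[p-q,[p,q]]\big)$, from the BCH correction in $\ell_6(xy)$; (ii) $\tfrac18 N([p,q][p,q])$, from squaring the $\tfrac12[p,q]$ inside $\ell_4(xy)$; and (iii) a short list of ``mixed'' terms, namely $\tfrac12 N(q[p,b_4])$, $\tfrac12 N(q[a_3,b_3])$, $\tfrac12 N(b_3[a_3,q])$, $\tfrac12 N(q[a_4,q])$, $\tfrac12 N(b_3[p,b_3])$, together with $\tfrac12 N(b_4[p,q])$ coming from the cross term of $\tfrac12\ell_4(xy)^2$.

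Pieces (i) and (ii) are immediate from Lemma~\ref{propN}: $N\!\big((p+q)[p-q,[p,q]]\big)=N\!\big([p+q,p-q]\,[p,q]\big)=-2N([p,q][p,q])$, so (i) $=-\tfrac16 N([p,q][p,q])$, and (i)$+$(ii)$=-\tfrac1{24}N([p,q][p,q])$. The heart of the argument is that (iii) vanishes, which I would verify again with Lemma~\ref{propN}: $N(b_3[p,b_3])=N(p[b_3,b_3])=0$ and $N(q[a_4,q])=N(a_4[q,q])=0$; $N(q[p,b_4])=N([q,p]b_4)=-N(b_4[p,q])$, which cancels the $\tfrac12 N(b_4[p,q])$ coming from $\ell_4(xy)^2$; and $N(q[a_3,b_3])+N(b_3[a_3,q])=N(a_3[b_3,q])+N(a_3[q,b_3])=N\!\big(a_3([b_3,q]+[q,b_3])\big)=0$. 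Granting (iii)$=0$ one obtains $L_8(xy)+L_8(xy^{-1})-2L_8(x)-2L_8(y)=2\big(-\tfrac1{24}\big)N([p,q][p,q])=-\tfrac1{12}N([\ell_2(x),\ell_2(y)][\ell_2(x),\ell_2(y)])$.

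I expect the only real obstacle to be the combinatorial bookkeeping: one must make sure that every $N$-monomial of total degree $8$ produced by the substitution has been accounted for, and that the parity argument has been applied correctly, before the clean list in (iii) emerges; the individual cancellations are then one-line consequences of the cyclicity of $N$ and the antisymmetry of the bracket, in direct analogy with the proofs of Lemmas~\ref{L_4(xy)} and~\ref{L_6(xy)}.
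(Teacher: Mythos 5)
Your argument is correct and is essentially the paper's own proof: the paper disposes of this lemma by saying it is proved ``the same way as Lemma \ref{L_6(xy)}'' and recording exactly the truncated BCH expansions $\ell_2(xy),\dots,\ell_6(xy)$ that you derive, the rest being the substitution into $L_8=N(\ell_2\ell_6)+N(\ell_3\ell_5)+\tfrac12N(\ell_4\ell_4)$ and the cancellations via Lemma \ref{propN} that you carry out explicitly. Your bookkeeping (the parity argument in $\ell(y)$, the vanishing of the mixed terms, and the final coefficient $2(-\tfrac16+\tfrac18)=-\tfrac1{12}$) all checks out.
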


\begin{proof}
This is proved by the same way as in Lemma \ref{L_6(xy)}.
We just remark that if $X=Y=0$, then by (\ref{BCH})
\begin{eqnarray*}
\ell_2(xy) &=& \ell_2(x)+\ell_2(y) \\
\ell_3(xy) &=& \ell_3(x)+\ell_3(y) \\
\ell_4(xy) &=& \ell_4(x)+\ell_4(y)+\frac{1}{2}[\ell_2(x),\ell_2(y)] \\
\ell_5(xy) &=& \ell_5(x)+\ell_5(y)+\frac{1}{2}([\ell_2(x),\ell_3(y)]
+[\ell_3(x),\ell_2(y)]) \\
\ell_6(xy) &=& \ell_6(x)+\ell_6(y)
+\frac{1}{2}([\ell_2(x),\ell_4(y)]+[\ell_3(x),\ell_3(y)]
+[\ell_4(x),\ell_2(y)]) \\
& & +\frac{1}{12}[\ell_2(x)-\ell_2(y),[\ell_2(x),\ell_2(y)]].
\end{eqnarray*}
\end{proof}

If the configuration of $\gamma$ is IV-a or IV-b,
the right hand side of (\ref{L_8(xy)}) for $L=L^{\theta^0}$ is
$$-\frac{1}{12}N([\textstyle{\sum}_{i=1}^{k_1+k_2}[A_i,B_i],
\textstyle{\sum}_{i=k_1+1}^{k_1+k_2}[A_i,B_i]]
[\textstyle{\sum}_{i=1}^{k_1+k_2}[A_i,B_i],
\textstyle{\sum}_{i=k_1+1}^{k_1+k_2}[A_i,B_i]]).$$
By the same way as in Lemma \ref{indep1}, we see that
this tensor of degree eight is not zero. This contradicts
to (\ref{tobemujun2}).

Now we have deduced a contradiction for any configuration
of a figure eight. This completes the proof of
Theorem \ref{mainthm}.

\vspace{0.3cm}

We end this paper by posing a question regarding
the characterization of simple closed curves.

\begin{quest}
{\rm Let $\gamma$ be an unoriented loop on $\Sigma$
and suppose the generalized Dehn twist $t_{\gamma}$
is a mapping class. Is $\gamma$ homotopic to a
power of a simple closed curve ?
}
\end{quest}

\noindent \textsc{Yusuke Kuno\\
Department of Mathematics,\\
Graduate School of Science,\\
Hiroshima University,\\
1-3-1 Kagamiyama, Higashi-Hiroshima, Hiroshima 739-8526 JAPAN}

\noindent \texttt{E-mail address: kunotti@hiroshima-u.ac.jp}

\end{document}